\documentclass[a4paper]{amsart}
\usepackage[english]{babel}
\usepackage[utf8]{inputenc}
\usepackage[T1]{fontenc}
\usepackage{lmodern}

\usepackage[shortlabels]{enumitem}
\setlist[enumerate]{label={\arabic*.}}
\setlist[description]{font=\normalfont\itshape}

\usepackage[dvipsnames]{xcolor}
\usepackage[backref]{hyperref}
\hypersetup{
  colorlinks=true,
  linkcolor=Maroon,
  citecolor=OliveGreen
}

\usepackage{todonotes}

\usepackage{cleveref}
\usepackage[nopatch=footnote]{microtype}

\usepackage{setspace}
\usepackage{amsmath,amsfonts,amsthm,mathtools,amssymb}
\usepackage{aliascnt}

\def\Z{\mathbf{Z}}

\def\C{\mathbf{C}}
\def\F{\mathbf{F}}
\def\E{\mathbf{E}}

\DeclareMathOperator{\tr}{tr}

\DeclareMathOperator{\End}{End}
\DeclareMathOperator{\im}{im}

\DeclareMathOperator{\Ad}{Ad}
\DeclareMathOperator{\diag}{diag}

\DeclareMathOperator{\Prob}{Prob}
\DeclareMathOperator{\diam}{diam}
\DeclareMathOperator{\Id}{Id}
\DeclareMathOperator{\Sym}{Sym}

\newcommand{\GL}{\mathrm{GL}}
\newcommand{\SL}{\mathrm{SL}}
\newcommand{\SU}{\mathrm{SU}}

\newcommand{\slalg}{\mathfrak{sl}}

\newcommand{\PSL}{\mathrm{PSL}}
\newcommand{\AGL}{\mathrm{AGL}}

\newcommand{\rr}{\mathsf{R}}
\newcommand{\cc}{\mathsf{C}}

\newcommand{\diamp}[1][]{\diam^{#1}_+}

\newcommand{\lalg}{\mathfrak{g}}
\newcommand{\aalg}{\mathfrak{a}}
\newcommand{\mon}{\mathrm{mon}}

\newtheorem{theorem}{Theorem}[section]
\newtheorem*{theorem*}{Theorem}
\newaliascnt{lemma}{theorem}
\newtheorem{lemma}[lemma]{Lemma}
\aliascntresetthe{lemma}
\newaliascnt{proposition}{theorem}
\newtheorem{proposition}[proposition]{Proposition}
\aliascntresetthe{proposition}
\newaliascnt{corollary}{theorem}
\newtheorem{corollary}[corollary]{Corollary}
\aliascntresetthe{corollary}
\newaliascnt{conjecture}{theorem}

\aliascntresetthe{conjecture}
\newtheorem*{conjecture*}{Conjecture}

\theoremstyle{definition}
\newaliascnt{question}{theorem}
\newtheorem{question}[question]{Question}
\aliascntresetthe{question}
\newtheorem*{question*}{Question}
\newaliascnt{definition}{theorem}
\newtheorem{definition}[definition]{Definition}
\aliascntresetthe{definition}
\newaliascnt{example}{theorem}
\newtheorem{example}[example]{Example}
\aliascntresetthe{example}
\newtheorem*{example*}{Example}

\theoremstyle{remark}
\newaliascnt{remark}{theorem}
\newtheorem{remark}[remark]{Remark}
\aliascntresetthe{remark}

\theoremstyle{plain}
\newtheorem{theoremx}{Theorem}

\crefname{theoremx}{Theorem}{Theorems}
\Crefname{theoremx}{Theorem}{Theorems}

\crefname{theorem}{theorem}{theorems}
\Crefname{theorem}{Theorem}{Theorems}
\crefname{lemma}{lemma}{lemmas}
\Crefname{lemma}{Lemma}{Lemmas}
\crefname{proposition}{proposition}{propositions}
\Crefname{proposition}{Proposition}{Propositions}
\crefname{corollary}{corollary}{corollaries}
\Crefname{corollary}{Corollary}{Corollaries}
\crefname{question}{question}{questions}
\Crefname{question}{Question}{Questions}
\crefname{remark}{remark}{remarks}
\Crefname{remark}{Remark}{Remarks}
\crefname{definition}{definition}{definitions}
\Crefname{definition}{Definition}{Definitions}
\crefname{example}{example}{examples}
\Crefname{example}{Example}{Examples}
\crefname{conjecture}{conjecture}{conjectures}
\Crefname{conjecture}{Conjecture}{Conjectures}

\title[Additive diameters and covering complexity]{Additive diameters and covering complexity of irreducible representations}

\author{Urban Jezernik}
\address{Urban Jezernik, Faculty of Mathematics and Physics, University of Ljubljana, Jadranska 21, 1000 Ljubljana, Slovenia; Institute of Mathematics, Physics, and Mechanics, Jadranska 19, 1000 Ljubljana, Slovenia}
\email{urban.jezernik@fmf.uni-lj.si}

\author{Špela Špenko}
\address{Špela Špenko, Département de Mathématique, Université Libre de Bruxelles, Campus de la
Plaine CP 213, Bld du Triomphe, B-1050 Bruxelles, Belgium}
\email{spela.spenko@ulb.be}

\thanks{UJ was supported by the Slovenian Research Agency program P1-0222 and grants J1-50001, J1-70033. ŠŠ was supported by a MIS grant from the National Fund for Scientific Research
(FNRS) and an ARC grant from the Université Libre de Bruxelles.}

\begin{document}
\baselineskip=13pt 

\begin{abstract}
Let a group $G$ act linearly on a finite-dimensional complex vector space $V$.
The group-additive diameter of a subspace $U \leq V$ is the least number of translates of $U$ whose sum is all of $V$.
Counting dimensions, it is at least $\dim V / \dim U$.
We show that when $G$ is compact and $V$ is irreducible, the diameter of every nonzero subspace is at most $\lceil (\dim V / \dim U) \ln \dim V \rceil$, so the trivial bound is correct up to a logarithmic factor.
We measure the discrepancy by the covering complexity $\cc(V)$, the largest ratio between the diameter of any subspace and its trivial lower bound, so that $1 \leq \cc(V) \leq 2 + \ln \dim V$, and we determine where in this range various representations lie.
Every irreducible representation of $\SL_2(\C)$ has $\cc(V) = 1$.
The logarithm can be genuinely present along families of symmetric and exterior powers of $\SL_n(\C)$ with $n$ varying, and it is present for finite Heisenberg groups and $2$-transitive groups of small order such as $\PSL_2(\F_p)$.
The complexity is bounded above by a constant on the conjugation representations of $\SL_n(\C)$ and on the representations $\Sym^k \C^3$ of $\SL_3(\C)$.
On the other hand, every fixed connected reductive group has a family of irreducible representations whose complexities tend to at least the dimension of the flag variety.
Finally, for the Lie algebra $\slalg_3(\C)$ acting on $\Sym^k \C^3$, the monomial diameter with respect to $\Sym^k X$ for a plane $X \leq \C^3$ is optimal, while the corresponding $\SL_3(\C)$ diameter is not.
\end{abstract}

\maketitle


\section{Introduction}\label{sec:intro}

\subsection{Additive diameters}
Let $G$ be a group with a linear representation $\rho$ on a finite-dimensional complex vector space $V$.
How efficiently do the translates of a subspace fill out the representation? 
We measure the efficiency by the \emph{group-additive diameter} of $V$ with respect to a nonzero subspace $U \leq V$,
\[
  \diamp[G](V, U) = \min\bigl\{m \mid \rho(g_1) \cdot U + \dots + \rho(g_m) \cdot U = V \text{ for some } g_1, \dots, g_m \in G\bigr\}.
\]
This is the least number of translates of $U$ whose sum is all of $V$, infinite when no number of them suffices.
In other words, every element of $V$ is a sum of $\diamp[G](V, U)$ vectors, each taken from some translate of $U$.
When $V$ is irreducible the diameter is finite for every nonzero $U$, since the sum of all translates of $U$ is a nonzero submodule. Counting dimensions gives the trivial lower bound
\[
  \diamp[G](V, U) \geq \Bigl\lceil \frac{\dim V}{\dim U} \Bigr\rceil.
\]
The central question is how large the diameter can be compared to this trivial bound, and how this depends on the group, the representation, and the subspace.

We ran into these diameters in \cite{JS25} while studying noncommutative analogues of the classical Waring problem \cite{VW02}. 
Instead of writing every integer as a sum of few $k$-th powers, the noncommutative version replaces integers by matrices and $k$-th power by a fixed noncommutative polynomial, and asks to write every matrix as a sum of few values of the polynomial (see \cite{BM26} for a recent survey).
The set of values of a noncommutative polynomial has a special property, namely it is invariant under conjugation by the group of invertible matrices.
Here is how we can make use of this property.
If $f \colon W \to V$ is a polynomial map that is equivariant for a group $G$ acting linearly on both sides, then we can differentiate $f$ to get linear maps $D_w f \colon W \to V$ for $w \in W$ that inherit transformation properties under $G$. This led us in \cite[Theorem 7.8]{JS25} to the inequality
\[
\diam_+(V, \im f) \leq 2 \cdot \diamp[G](V, \im(D_w f)),
\]
where the left-hand side is the least number of summands with $\im f + \dots + \im f = V$.
Group-additive diameters therefore bound various Waring-type diameters.
In particular, the noncommutative version above corresponds to invertible matrices acting by conjugation on matrices, with $f$ a noncommutative polynomial.

Apart from some concrete representations tied to the applications above that we studied in \cite{JS25}, the behaviour of group-additive diameters remains largely unexplored.
This paper is a step towards a general theory.
We prove that for every irreducible unitary representation of every compact group, the diameter of every subspace is within a logarithmic factor of the trivial bound.
We show that this logarithm cannot be removed in general, and we begin to map out where it is present and where it is not.

\subsection{Growth and completion}\label{subsec:expansion}
Our approach to studying group-additive diameters is guided by an analogy with growth in finite groups.
There one might start with a generating set $S$ of a finite simple group $G$ of Lie type and ask how many products of elements of $S$ are needed to write every element of $G$.
The answer comes in two stages.
If $S$ is small, then products of copies of $S$ grow exponentially in size \cite{Hel08,BGT11,PS16} until they become very large, and at that point Gowers' trick \cite{Gow08,NP11} says that three copies of such a large set already cover $G$.
\emph{Growth} handles the small sets, and \emph{completion} handles the large ones.

We would wish for such a two-stage process in our problem as well.
In the first stage we add translates of $U$ one at a time, and we want the dimension of the sum to increase by a definite factor at each step.
In the second stage the sum already occupies a constant fraction of the dimension, and we want a bounded number of further translates to reach all of $V$.

A variant of the first stage has been studied under the name of \emph{dimension expanders} (a linear analogue of expander graphs).
We say that endomorphisms $A_1, \ldots, A_k$ of a finite-dimensional complex vector space $V$ form an $\epsilon$-dimension expander for some $\epsilon > 0$ if
\[
  \dim (U + A_1 U + \dots + A_k U) \geq (1 + \epsilon) \dim U
\]
for every subspace $U \leq V$ of dimension at most $\dim V/2$.
Thus a fixed finite set of maps enlarges every small subspace by a fixed factor in dimension.
Lubotzky and Zelmanov \cite{LZ08} proved using Kazhdan's property (T) that there are constants $k$ and $\epsilon > 0$, independent of $\dim V$, such that every $V$ has a set of $k$ endomorphisms that form an $\epsilon$-dimension expander.
Reineke \cite{Rei24} has recently obtained sharp existence results using quiver representations, determining for each fixed $k$ the optimal $\epsilon$.
See \cite{LQWWZ25} for a general overview of dimension expanders.

Our setting differs from that of dimension expanders in the following way.
A dimension expander is a fixed finite set of maps that has to work for every subspace at once.
We, on the other hand, are free to choose the translates after seeing the subspace.
In return we can prove  more. The following is what produces the growth stage in our setting.

\begin{theoremx}[\Cref{thm:first-moment}]\label{thm:A}
Let $K$ be a compact group with an irreducible unitary representation $\rho$ on a finite-dimensional complex vector space $V$, and let $U, W \leq V$.
Then there exists $g \in K$ with
\[
  \dim\bigl(W + \rho(g) \cdot U\bigr) \geq \dim W + \Bigl(1 - \frac{\dim W}{\dim V}\Bigr)\dim U .
\]
\end{theoremx}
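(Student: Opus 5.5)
Since the statement is labelled a first-moment result, the plan is to average over $K$ with respect to normalized Haar measure $\mu$ and show that the \emph{expected} dimension of $W + \rho(g)\cdot U$ is at least the right-hand side. Some $g$ then attains at least the average, because dimension takes only finitely many integer values.

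To express the dimension of the sum in a form suitable for averaging, equip $V$ with a $K$-invariant inner product. Let $P_W$ be the orthogonal projection onto $W$ and $Q = \Id - P_W$ the projection onto $W^\perp$. Then
\[
\dim(W + \rho(g) U) = \dim W + \dim Q\rho(g)U = \dim W + \operatorname{rank}\bigl(Q\rho(g)P_U\bigr).
\]
We bound the rank from below by a trace. Set $A_g = Q\rho(g)P_U\rho(g)^{*}Q$. This is a positive semidefinite operator with $\|A_g\| \le 1$, since $Q$ and $\rho(g) P_U \rho(g)^*$ are projections. Consequently
\[
\operatorname{rank}(Q\rho(g)P_U) = \operatorname{rank} A_g \ge \tr A_g = \tr\bigl(Q\,P_{\rho(g)U}\bigr).
\]

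It remains to average the trace. The operator $\int_K \rho(g)P_U\rho(g)^{-1}\,d\mu(g)$ commutes with $\rho(K)$, so Schur's lemma makes it a scalar, and comparing traces identifies the scalar as $\dim U/\dim V$. Schur's lemma applies here because $\rho$ is irreducible and unitary (hence completely reducible), and $\rho(g)^* = \rho(g)^{-1}$. It follows that
\[
\int_K \tr(Q P_{\rho(g)U})\,d\mu = \frac{\dim U}{\dim V}\tr Q = \Bigl(1-\frac{\dim W}{\dim V}\Bigr)\dim U.
\]
Combining the three displays, the average of $\dim(W+\rho(g)U)$ is at least the stated bound, so some $g$ achieves it.

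The main obstacle is the rank–trace inequality; the rest is Schur's lemma. The inequality holds because the eigenvalues of $A_g$ lie in $[0,1]$, so the number of nonzero eigenvalues is at least their sum. The measurability of $g\mapsto \dim(W+\rho(g)U)$ needs only a brief remark: the function is lower semicontinuous, and in any case we only need the pointwise bound by the continuous trace function, whose integral we computed.
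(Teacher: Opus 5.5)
Your proof is correct and is essentially the paper's argument: Schur's lemma averages $P_U$ to $(\dim U/\dim V)\Id$, and your rank--trace bound $\operatorname{rank}(Q P_{\rho(g) U}) \geq \tr(Q P_{\rho(g) U})$ is the complementary form of \Cref{lem:trace}, because $\dim(W \cap \rho(g) U) + \operatorname{rank}(Q P_{\rho(g) U}) = \dim U = \tr(P_W P_{\rho(g) U}) + \tr(Q P_{\rho(g) U})$. The paper proves the equivalent intersection form $\dim(W \cap \rho(g)\cdot U) \leq \dim U \dim W/\dim V$ by choosing $g$ at or below the average rather than at or above it, and then obtains the sum form from $\dim(W + \rho(g) U) = \dim W + \dim U - \dim(W \cap \rho(g) U)$.
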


A single translate therefore expands any subspace, while the expansion factor decays with the density of $W$ in $V$.
The statement passes to complex reductive groups, where the good translates form a nonempty Zariski open set.
The diameter over a reductive group is the same as over its maximal compact subgroup, so we move freely between the two settings.

Iterating the above growth gives our main general upper bound on the diameter.

\begin{theoremx}[\Cref{thm:covering}]\label{thm:B}
Let $K$ be a compact group with an irreducible unitary representation on $V$, put $N = \dim V \geq 2$ and $u = \dim U$ for a nonzero $U \leq V$. Then
\[
  \diamp[K](V, U) \leq \min\Bigl\{ \Bigl\lceil \frac{N \ln N}{u} \Bigr\rceil, \quad \frac{N(1 + \ln u)}{u} + 1, \quad N - u + 1 \Bigr\}.
\]
\end{theoremx}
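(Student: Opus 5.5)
We obtain all three bounds by applying \Cref{thm:A} repeatedly, each time with $W$ the sum of the translates chosen so far. Put $W_0 = U$ (the identity translate) and, given $W_j$, use \Cref{thm:A} to pick $g_{j+1}$ with $W_{j+1} = W_j + \rho(g_{j+1}) U$ satisfying
\[
  \dim W_{j+1} \geq \dim W_j + \Bigl(1 - \frac{\dim W_j}{N}\Bigr) u .
\]
Write $c_j = N - \dim W_j$ for the codimension. The inequality becomes $c_{j+1} \leq c_j (1 - u/N)$. Starting from $c_0 = N - u$, this gives $c_m \leq (N-u)(1-u/N)^m \leq N e^{-mu/N}$. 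The diameter is at most $1 + m$, where $m$ is the first index with $c_m = 0$. Since codimensions are integers, it suffices to have $c_m < 1$.

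\emph{First bound.} Start instead from $W_0 = 0$, so that every translate counts, including the first. Then $c_0 = N$, and after $m$ steps $c_m \leq N(1-u/N)^m < N e^{-mu/N}$, the inequality being strict because $u \geq 1$. Taking $m = \lceil N\ln N / u\rceil$ gives $c_m < N e^{-\ln N} = 1$, hence $c_m = 0$. One must check the case where $u/N$ is large, but the strictness handles it.

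\emph{Third bound.} As long as $W_j \neq V$ we have $\dim W_j \leq N-1$. So by \Cref{thm:A} the increment is at least $u/N > 0$, and since dimensions are integers the dimension grows by at least one. Starting at $\dim W_0 = u$, at most $N-u$ further translates are needed, giving $N - u + 1$. (If one prefers not to use integrality this way: when $W_j \neq V$, irreducibility means $W_j$ is not invariant, so some translate of $U$ is not contained in $W_j$. The bound of \Cref{thm:A} already ensures this.)

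\emph{Second bound.} This is the main step: two regimes have to be combined. Run the multiplicative decay until the codimension drops to about $N/u$. From $c_0 \leq N$ that takes $m_1 = \lceil (N/u)\ln u\rceil$ steps, giving $c_{m_1} \leq N e^{-\ln u} = N/u$. Then switch to the additive regime, where each further translate raises the dimension by at least one. That naive count costs another $N/u$ steps and gives $N(1+\ln u)/u + O(1)$ in total. The issue is the rounding, to get exactly $+1$. The cleaner way is to bound each step's decrement by $\max\{1, c_j u/N\}$, i.e. $c_{j+1} \leq \min\{c_j - 1,\ c_j(1-u/N)\}$. Then compare with the continuous solution: the time for $c$ to go from $N$ to $N/u$ is at most $(N/u)\ln u$, and going from $N/u$ to $0$ takes at most $N/u$ steps of size one. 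Adding the initial translate gives at most $N(1+\ln u)/u + 1$ translates. I expect the only delicate point to be carrying out this bookkeeping with the floors and ceilings.
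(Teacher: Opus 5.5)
Your route is the paper's. \Cref{thm:A} is \Cref{thm:first-moment} rewritten. The paper applies the latter to $W_k^\perp$ and $U^\perp$ in \Cref{lem:growth}, you apply it to $W_j$ and $U$, and both give $c_{j+1}\le(1-u/N)c_j$. Your first bound is the paper's argument verbatim. For the third bound you get the one-dimension gain from integrality of the increment $u/N>0$ in \Cref{thm:A}, whereas the paper argues directly that the sum of all translates of $U$ is a nonzero submodule. Both are fine.

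The second bound, however, is not proved as written. You defer ``the bookkeeping with floors and ceilings'', and the continuous comparison followed by ``adding the initial translate'' is not a derivation. If $c$ starts at $N$, i.e.\ $W_0=0$, there is no separate initial translate to add, and the $+1$ must come from a ceiling on the number of growth steps. Your belief that the naive two-phase count only gives $+O(1)$ comes from the lossy estimate $(N-u)(1-u/N)^m\le Ne^{-mu/N}$, which discards one full step, since $(N-u)(1-u/N)^m=N(1-u/N)^{m+1}$.

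Count from $W_0=0$, as you did for the first bound. After $k_1=\lceil (N/u)\ln u\rceil$ translates the codimension is an integer at most $Ne^{-\ln u}=N/u$. Hence at most $\lfloor N/u\rfloor$ further translates, each gaining a dimension, finish the job, and
\[
  k_1+\Bigl\lfloor \frac{N}{u}\Bigr\rfloor \le \frac{N\ln u}{u}+1+\frac{N}{u}=\frac{N(1+\ln u)}{u}+1 .
\]
This is exactly the paper's argument. The paper uses the strict form of \eqref{eq:decay} to get at most $\lceil N/u\rceil-1$ further steps, and treats $u=1$ separately via the third bound. With the non-strict estimate above, even $u=1$ needs no separate case, since then $k_1=0$ and one needs $N\le N+1$ translates.

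So the missing piece is only this final count. As submitted, though, your second bound rests on an unexecuted sketch built on an estimate that is off by one step.
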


So the diameter is always within a factor $\ln \dim V$ of the trivial bound.
Let us give a name to this discrepancy.
For a nonzero $U \leq V$ put
\[
  \rr(V, U) = \frac{\diamp[K](V, U)}{\lceil \dim V / \dim U \rceil}
  \qquad \text{and} \qquad
  \cc(V) = \max_{0 \neq U \leq V} \rr(V, U).
\]
Thus $\cc(V)$ is the largest factor by which the diameter of a subspace of $V$ exceeds its trivial bound, and $\cc(V) = 1$ means that every subspace of $V$ has an optimal diameter.
\Cref{thm:B} therefore gives
\[
  1 \leq \cc(V) \leq 2 + \ln \dim V .
\]
The logarithm here appears because of running the growth stage all the way to the end.
In order to get a better bound, we need to stop the growth stage when the sum of translates already occupies a constant fraction of $V$, and then finish with a bounded number of further translates.
We prove that if every subspace of $V$ of density at least some fixed $\theta < 1$ has diameter at most $C_0$, then $\cc(V)$ is bounded by a constant depending only on $\theta$ and $C_0$.
So the question becomes whether such a completion step exists, and the answer depends on the representation.
The rest of the paper studies this question in several situations, and the rest of the introduction describes what we found.

\subsection{Optimal diameters}
The best one can hope for is $\cc(V) = 1$, so that every subspace of $V$ has an optimal diameter.
There is a strong property that forces this.
Counting dimensions, two subspaces can meet in dimension as low as $\max(0, \dim W + \dim U - \dim V)$, and this is typically much smaller than what \Cref{thm:first-moment} gives.
We call $V$ \emph{transversal} if for every pair of subspaces some translate of $U$ meets $W$ in this smallest possible dimension.
A transversal representation has $\cc(V) = 1$, and we show that every irreducible representation of $\SL_2(\C)$ is transversal.
This gives a second proof of the optimality of all diameters over $\SL_2(\C)$ from \cite{JS25}, and a more conceptual one.
The standard representations of the symmetric groups are transversal as well.
Transversality is strictly stronger than $\cc(V) = 1$, and it fails badly in the examples of the next subsection.

\subsection{Logarithmic complexity}
The logarithm in \Cref{thm:covering} cannot be removed in general.

\begin{theoremx}[\Cref{thm:heis,thm:2trans}]\label{thm:C}
Let the inner supremum below range over all irreducible unitary representations $V$ of all compact groups with $\dim V = N$. Then
\[
  \limsup_{N \to \infty} \ \sup_{\dim V = N} \frac{\cc(V)}{\ln N} = 1 .
\]
\end{theoremx}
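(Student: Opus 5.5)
The upper bound $\limsup \le 1$ comes directly from \Cref{thm:B}. For $N \ge 2$ and nonzero $U$ with $u = \dim U$, the first term gives
\[
\diamp[K](V,U) \le \lceil N\ln N/u\rceil \le (N/u)\ln N + 1 \le \lceil N/u\rceil \ln N + 1 .
\]
Dividing by $\lceil N/u\rceil \ge 1$ yields $\rr(V,U) \le \ln N + 1$. Hence $\cc(V)/\ln N \le 1 + 1/\ln N$, which tends to $1$. The substance is therefore the lower bound: I need a family of irreducible representations $V_N$ with $\dim V_N = N \to \infty$ and $\cc(V_N) \ge (1-o(1))\ln N$.

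For the lower bound I would use coupon-collector obstructions built from monomial structure. Let $V$ have a basis $e_1,\dots,e_N$ that the group permutes up to scalars, and which is well enough spread that every group element sends lines spanned by basis vectors to such lines. Finite Heisenberg groups acting on $\C^p$ give this: generic elements of $SL$-type normalisers are excluded since we stay in the finite group, and elements act by monomial matrices (shift times diagonal phase). Doubly transitive permutation groups give it too, via the monomial part of the permutation module. Take $U$ to be a generic-looking subspace whose coordinate support pattern forces the following: every translate $gU$, written in the basis, projects nontrivially onto only some coordinates. In each translate some fixed proportion of the coordinates are killed, or, dually, $U$ lies inside a coordinate hyperplane-type subspace $H_i = \{x_i = 0\}$ for many $i$.

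The cleanest version is to take $U$ of dimension $u = N - k$ contained in the intersection of coordinate hyperplanes indexed by a set $S$ with $|S| = k$. Then $\sum_j g_j U = V$ forces the sets $g_j S$ (under the induced permutation of coordinates) to have empty common intersection. Requiring instead that their complements cover $[N]$ gives a covering problem. Dually, taking $U$ spanned by $u$ basis vectors means the translates must cover all $N$ coordinates by images of a $u$-set $T$. For a group whose action on coordinates is small, such as a Heisenberg group of order $p^3$ acting through $\Z/p$ shifts, or a $2$-transitive group of order about $N^3$ like $\PSL_2(\F_p)$, only polynomially many distinct images of $T$ exist. Choosing $T$ well, or using a probabilistic/greedy lower bound for set cover against this small family, forces roughly $(N/u)\ln(\cdots)$ sets. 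Concretely, I would pick $u \approx \ln N$ or similar, and argue that any family of $m$ images covering $[N]$ must have $m \ge (1-o(1))(N/u)\ln(N/u)$. The argument would count: a random $T$ of size $u$ is such that every collection of $m$ images leaves a point uncovered, by a union bound over the at most $|G|^m$ collections, each covering with probability about $(1 - u/N)^{\ldots}$. I would then tune $u$ so that $\ln(N/u) \sim \ln N$, taking $u = N^{o(1)}$ but $u \to \infty$.

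The main obstacle is this union bound. The number of $m$-tuples is $|G|^m = N^{O(m)}$, while the probability that a fixed tuple covers $[N]$ must be shown to be tiny, roughly $\exp(-N e^{-mu/N})$. This requires handling the dependence between the images of a random $T$. I would first try the second-moment or Janson-type inequality, exploiting $2$-transitivity (pairwise images intersect like random sets) to control correlations. With $m = (1-\delta)(N/u)\ln N$, the expected number of uncovered points is $N^{\delta}$, which beats $m\ln|G| = O((N/u)\ln^2 N)$ once $u$ grows polynomially but slowly. This balancing of the choice of $u$ is where care is needed.
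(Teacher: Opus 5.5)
\textbf{The lower bound has a genuine gap: your parameter regime cannot work.} Your upper bound is fine; it is \Cref{prop:cxupper} in slightly different form. Your reduction for the Heisenberg group to covering $\Z/p$ by translates of a set $T$ is exactly \Cref{prop:heis-dict}.

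The problem is the choice $u = \dim U = N^{o(1)}$ (even $u \approx \ln N$), together with the target $m \geq (1-o(1))(N/u)\ln(N/u)$. The middle bound of \Cref{thm:covering} gives $\rr(V,U) \leq 1 + \ln u + u/N \leq 2 + \ln u$ for every nonzero $U$. So subspaces of dimension $N^{o(1)}$ have covering ratio $o(\ln N)$ and can never witness $\cc(V) \geq (1-o(1))\ln N$. For the Heisenberg group the diameter equals the covering number, so the same bound $(N/u)(1+\ln u)+1$ shows your target inequality is false once $u$ is small. Any witness must have $\dim U = N^{1-o(1)}$. Your own union-bound arithmetic says the same thing: $N^{\delta} \gg (N/u)\ln^2 N$ forces $u \gtrsim N^{1-\delta}\ln^2 N$, not ``polynomial but slow'' growth.

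\textbf{The correct regime.} Both \Cref{thm:heis} (via $|S| = \lceil p/\ln p\rceil$ and Bollob\'as--Janson--Riordan) and \Cref{thm:2trans} use density $\alpha = 1/\ln N$. Then $\dim U \sim N/\ln N$, the trivial bound is $\sim \ln N$, and one must rule out covers by $m = \lfloor c \ln^2 N\rfloor$ translates.

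\textbf{The dependence step is left open and misattributed.} You defer it to a Janson or second-moment argument and credit $2$-transitivity with controlling correlations. In fact the dependence is among the events ``$x$ is uncovered'' for a fixed tuple $g_1,\dots,g_m$. It is governed by that tuple, not by $2$-transitivity; the paper uses $2$-transitivity only to make the deleted permutation module irreducible. The paper avoids correlation inequalities entirely:
\begin{itemize}
\item The preimage sets $P_x = \{g_i^{-1}x\}$ and $P_y$ meet only if $y = g_a g_b^{-1}x$.
\item So one can greedily choose $\lceil N/m^2\rceil$ points with pairwise disjoint $P_{x_j}$.
\item The events $S \cap P_{x_j} = \emptyset$ are then independent, each of probability at least $e^{-1}N^{-c}$.
\item This gives $\Prob[\text{cover}] \leq \exp(-N^{1-c}/(3m^2))$. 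That is weaker than your heuristic $\exp(-Ne^{-mu/N})$, but it still beats the union bound over $|G|^m = \exp(O(\ln^3 N))$ tuples for Heisenberg groups or $\PSL_2(\F_p)$.
\end{itemize}

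\textbf{The $2$-transitive case needs a different setup.} The relevant module is the deleted permutation module, not a space with a permuted basis. Its subspaces $U_S$ are not coordinate subspaces, so covering $\Omega$ is only necessary for spanning, not sufficient. That suffices for a lower bound. The Heisenberg family alone already proves the statement.
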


As a concrete example, now for connected reductive groups, the representations 
\begin{equation} \label{eq:sym_ext}
\cc_{\SL_{k+1}(\C)}\big(\Sym^k \C^{k+1}\big),
\
\cc_{\SL_{2m}(\C)}\big(\Lambda^m \C^{2m}\big)
\ \geq \ 
\ln \dim V/(4\ln 2).
\end{equation}
We produce these and other examples of logarithmic complexity from the following two mechanisms.

The first one uses subspaces with a lot of structure.
The model case is $V = \Sym^k \C^n$ under $\SL_n(\C)$ with $U = \Sym^k X$ for a hyperplane $X$.
The translates of $U$ are the symmetric powers of other hyperplanes.
The diameter turns out to be
\[
  \diamp[\SL_n(\C)](\Sym^k \C^n, \Sym^k X) = k + 1
  \qquad \text{for every } k \geq 1,
\]
whereas the trivial bound can be as small as $2$.
Exterior powers of a hyperplane behave in the same way.
The latter representations can be restricted to the symmetric groups, showing that $S_n$ has representations with both $\cc = 1$ on its standard module and $\cc$ logarithmic on a suitable exterior power of that module.

The second mechanism needs the group to be small compared to $\dim V$.
An example is the Heisenberg group of order $p^3$ in its Schrödinger representation on $\C[\Z/p]$.
The translates of the coordinate subspace indexed by a subset $S \subseteq \Z/p$ are the coordinate subspaces indexed by the translates $S + t$. The diameter is thus exactly the least number of translates of $S$ that cover $\Z/p$, a quantity studied in additive combinatorics.
General results of Bollobás, Janson and Riordan \cite{BJR11} give $\cc(\C[\Z/p]) = (1 + o(1))\ln p$.
One might think that the Heisenberg group is to blame, since it offers only $p$ translates of each coordinate subspace.
It is not.
We show that the same happens for the deleted permutation module of any $2$-transitive group on $n$ points of order at most $\exp(n^{o(1)})$, which includes the affine $2$-transitive groups and the projective actions of $\PSL_d(\F_q)$.
For $\PSL_2(\F_p)$ acting on the projective line this module is the Steinberg representation.

\subsection{Fixed groups}
In the examples above the group changes with the representation, so it remains possible that the complexity is bounded for every fixed connected group.
We do not know whether this is the case.
What we do show is that the complexity of the conjugation representation of $\SL_n(\C)$ is bounded by a constant independent of $n$, and that the same holds for the representations $\Sym^k \C^3$ of $\SL_3(\C)$.
In the latter family, the dimension of the representation grows while the group stays fixed.
The proof follows the two-stage scheme of growth and completion.
We have not been able to treat the other irreducible representations of $\SL_3(\C)$ in the same way.

On the other hand, we can also produce lower bounds for complexities of a fixed connected group.

\begin{theoremx}[\Cref{cor:flagbound}]\label{thm:D}
Let $G$ be a connected reductive group with $\dim G/B > 0$ and let $\lambda$ be a regular dominant weight. Then
\[
  \liminf_{k \to \infty} \cc\bigl(V(k\lambda)\bigr) \geq \dim G/B.
\]
\end{theoremx}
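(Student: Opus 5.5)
The plan is to exhibit, for each large $k$, a subspace $U_k \leq V(k\lambda)$ that looks like a \emph{jet space} at a point of $G/B$, and to reduce its diameter to a vanishing problem for sections of line bundles. By Borel--Weil, $V(k\lambda)^* \cong H^0(G/B, L_\lambda^{k})$ with $L_\lambda$ ample, since $\lambda$ is regular. Put $d = \dim G/B$. Fix a point $x \in G/B$ and an integer $m = m(k)$, and let
\[
W_k = \{ s \in H^0(L_\lambda^k) : \operatorname{ord}_x s \geq m \}, \qquad U_k = W_k^{\perp} \leq V(k\lambda).
\]
Then $\dim U_k$ is the rank of the $(m-1)$-jet evaluation at $x$. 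For $m \leq k\epsilon$ with $\epsilon$ small, this map is surjective, so $\dim U_k = \binom{m-1+d}{d}$.

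Since $(\sum_i g_i U_k)^\perp = \bigcap_i g_i W_k$, the translates $g_1 U_k, \dots, g_r U_k$ span $V(k\lambda)$ exactly when no nonzero section of $L_\lambda^k$ vanishes to order $\geq m$ at all the points $g_1 x, \dots, g_r x$. A \emph{lower} bound on $\diamp(V(k\lambda), U_k)$ therefore amounts to constructing, for \emph{any} $r$ points of $G/B$, a nonzero section of $L_\lambda^k$ vanishing to order $m$ at each of them.

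For the construction I will use products. Write $\lambda = \sum_i c_i \varpi_i$ with all $c_i > 0$. The Schubert-type divisors $D_i(y) = \{\text{zeros of the extremal section of } L_{\varpi_i} \text{ adapted to } y\}$ pass through any prescribed point $y$. Through a given point I can then place a section of $L_\lambda^{a}$ vanishing to high order, at a cost in degree governed by the Seshadri-type constant of $L_\lambda$ at $x$. By homogeneity this constant is the same at every point, and it is computed on the $T$-fixed curves through $x$, which are the $\mathbf{P}^1$'s of degrees $\langle \lambda, \alpha^\vee \rangle$.

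Multiplying sections attached to groups of points gives a section of $L_\lambda^{k}$ vanishing to order $m$ at all $r$ points, provided the degree budget $k$ is not exceeded. The basic example is $\mathbf{P}^1\times\mathbf{P}^1$: there $(x-x_1)^k(y-y_2)^k$ kills two points to order $k$ using a bundle that "looks like" one point's worth of jets. In $G/B$ one uses the $d$ directions of $T$-stable curves to let one section serve several points simultaneously. This yields a bound of the form $\diamp \geq r(k,m)$.

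Finally I will choose $m = \lfloor ck \rfloor$ and compute both sides asymptotically. The trivial bound is $\lceil N/u \rceil$ with $N = \dim V(k\lambda) \sim \operatorname{vol}(L_\lambda)\,k^d/d!$ by the Weyl dimension formula, and $u \sim (ck)^d/d!$. Letting $k \to \infty$ and then optimising over $c$ (which may tend to a boundary value such as the Seshadri constant, where the trivial bound stays bounded), I aim to show that the ratio $\rr(V(k\lambda), U_k)$ tends to at least $d$.

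The main obstacle is the middle step: producing sections vanishing to order $m$ at \emph{arbitrary} $r$ points, with $r$ a factor about $d$ larger than $N/u$. Naive dimension counting only gives $r \approx N/u$ and hence a ratio near $1$, so the gain must come from the geometry of $G/B$. I would first work out the torus case $G = \SL_2^{d}$, where $G/B = (\mathbf{P}^1)^d$ and products of fibre-coordinate sections make everything explicit. I would then try to transport that construction to general $G/B$ via the $d$ rulings given by $T$-stable curves through a point, or via a degeneration of $G/B$ to a toric variety whose moment polytope is the weight polytope of $\lambda$.
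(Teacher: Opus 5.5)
\textbf{There is a genuine gap, at exactly the step you flag as the main obstacle.} Nothing in the plan bounds $\diamp[G](V(k\lambda), U_k)$ from below, and the mechanism you suggest cannot do it.

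To get $\rr \to d$ from jet subspaces, you need the following for every $r$-tuple of points of $G/B$, including a general one, with $r \approx dN/u$: a nonzero section of $L_\lambda^k$ of multiplicity $m$ at each point. The expected dimension of that linear system is about $N - ru \approx (1-d)N$, so you would need extremely special systems at general points.

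Products of ruling-type sections do not supply them. Already on $(\mathbf{P}^1)^d$ with $m = ck$, take a product $\prod_j \prod_i (s_j - a_{ij})^{b_{ij}}$ with $\sum_i b_{ij} \leq k$. Its total multiplicity spread over general points is at most $dk$, so it handles at most $d/c$ of them. The trivial bound, by contrast, is about $d!/c^d$.

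The jet subspaces in fact appear not to reach $d$ at all. Take $G = \SL_2 \times \SL_2$, $\lambda = (1,1)$, $m = k$ and $k \geq 3$.
\begin{itemize}
\item Any three points lie on a $(1,1)$-curve, and its $k$-th power vanishes to order $k$ at each of them. So three translates never span.
\item For four general points, consider the $(1,1)$-curve through any three of them. It meets a bidegree $(k,k)$ divisor of multiplicity $k$ at those points with intersection number $2k < 3k$. So all four such curves are components of the divisor, and iterating this gives a contradiction. Hence four general translates span.
\end{itemize}
The diameter is therefore exactly $4$, against a trivial bound of $\lceil 2 + 2/k \rceil = 3$. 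That is a ratio of $4/3 < 2 = \dim G/B$.

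\textbf{The missing idea is to impose vanishing along a divisor rather than at a point.} The paper takes $U_{\ell,k} = (\ell R_{k-1})^{\perp}$ for a nonzero $\ell \in R_1 = V(\lambda)^*$, where $R = \bigoplus_m H^0(G/B, L_\lambda^m)$ is the section ring.
\begin{itemize}
\item The annihilator of $gU_{\ell,k}$ is $(g\ell)R_{k-1}$.
\item For any $g_1, \dots, g_m$ with $m \leq k$, pick $0 \neq r \in R_{k-m}$. The product $(g_1\ell)\cdots(g_m\ell)\, r$ is nonzero, because the section ring of an irreducible variety is a domain, and it annihilates all $m$ translates.
\end{itemize}
This works for every tuple at once, with no genericity or special-system analysis, and gives $\diamp[G](V(k\lambda), U_{\ell,k}) \geq k+1$.

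For the comparison with the trivial bound, $\dim U_{\ell,k} = h(k) - h(k-1) \sim c\,d\,k^{d-1}$ while $N = h(k) \sim c\,k^d$. So the trivial bound is about $k/d$, and $\rr \geq (k+1)u/(N+u) \to d$. The factor $d$ comes simply from comparing the Hilbert function with its first difference.

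Your instinct to multiply divisors was right. It must be applied to the divisors $Z(g\ell)$ that define the subspace itself, not to point conditions.
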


\subsection{Lie algebras}
A representation of a complex algebraic group $G$ can be differentiated to a representation of its Lie algebra $\lalg$.
In \cite[Section 6]{JS25} we asked what the additive diameter looks like in this setting.
We translate subspaces using the monomials $\rho(x_1) \cdots \rho(x_r)$ with $x_i \in \lalg$.
The \emph{monomial Lie-additive diameter} $\diamp[\lalg,\mon](V, U)$ is the least number of monomials $m_1, \ldots, m_d$ with $m_1 \cdot U + \dots + m_d \cdot U = V$.
Translating by a monomial does not increase dimension, so the trivial lower bound for the diameter persists here as well.

It is unclear how the group-additive and the Lie-additive diameters are related.
In \cite[Questions 6.6 and 6.7]{JS25} we wondered whether irreducible representations of Lie algebras always have optimal monomial diameters, and whether the monomial diameter is always at most the group diameter.
We can support both with the following new example. 
Let $\slalg_3(\C)$ act on $\Sym^k \C^3$. Let $X$ be a plane in $\C^3$. Then
\[
  \diamp[\slalg_3(\C),\mon](\Sym^k \C^3, \Sym^k X) = \Bigl\lceil \frac{k+2}{2} \Bigr\rceil
  \qquad \text{for every } k \geq 1 .
\]
This is the same pair as in \eqref{eq:sym_ext}, but now the group is replaced by its Lie algebra.
The group needs $k+1$ translates, asymptotically twice the trivial bound, while the Lie algebra attains the trivial bound.

\subsection{Reader's guide}

We begin with the growth stage.
In \Cref{sec:intersect} we prove \Cref{thm:A} by an averaging argument, and we record that the diameter over a reductive group agrees with the diameter over a maximal compact subgroup.
In \Cref{sec:covering} we iterate the intersection bound to obtain \Cref{thm:B}, introduce the covering complexity, and record the explicit application to equivariant morphisms.
The next three sections locate several examples of representations at the two ends of the range $1 \leq \cc(V) \leq 2 + \ln \dim V$.
In \Cref{sec:optimal} we use transversality to show that $\cc(V) = 1$ for the standard representations of the symmetric groups and for the irreducible representations of $\SL_2(\C)$.
In \Cref{sec:degree} we obtain logarithmic complexity from structured subspaces using the Borel--Weil theorem, which gives \Cref{thm:D} and the families in \eqref{eq:sym_ext}, and in \Cref{sec:small} we obtain it from small groups, namely the Heisenberg groups and $2$-transitive groups of small order, which completes the proof of \Cref{thm:C}.
In \Cref{sec:bounded} we turn to the completion stage and use it to bound the complexity of the conjugation representations of $\SL_n(\C)$ and of the representations $\Sym^k \C^3$ of $\SL_3(\C)$.
Finally, in \Cref{sec:lie} we compute the monomial Lie-additive diameter of $\Sym^k X$ in $\Sym^k \C^3$ over $\slalg_3(\C)$.

\subsection{AI disclosure}

This paper began with a mistake of ours.
An earlier version of \cite{JS25} claimed that a certain additive diameter is linear in $1/\epsilon$ (Proposition 5.3 there).
Ernesto Ingrosso found an error in the proof, and he then found a counting argument over the symmetric group recovering the linear bound \cite{Ing26}.
Put to Claude Opus 5, that argument turned out to be an averaging argument in disguise, and the model produced \Cref{thm:A}, generalizing it to all compact groups.
Over many further prompts, Claude Opus 5 and Fable 5 produced the rest of the results in this paper.
The models also drafted much of the text.
A preliminary version of the paper was then put to ChatGPT 5.6 Sol, which found several inaccuracies and simplified some of the arguments.

We had been thinking about the underlying theory and the questions addressed in this paper for quite some time, well before the advent of modern AI tools.
Our own contribution to the present work has been to direct the models, mostly by asking for examples that would test the sharpness of the bounds, to verify every proof in detail by hand, to expand on the denser parts of the proofs, to simplify some of the more complicated arguments, and to organize and rewrite the paper and develop its overall narrative.
None of the proofs here originated with us, and the examples were worked out by the models.
That said, we take full responsibility for what the paper contains.

The train we boarded here is fast and exciting. We do not know where it is going, but we cannot shake the uneasy feeling that we are about to plunge into an abyss.

\subsection{Acknowledgements}

We thank Ernesto Ingrosso for sharing \cite{Ing26} that this paper grew out of.

\section{Intersections of translates}\label{sec:intersect}

Throughout this section $K$ is a compact group with a finite-dimensional irreducible unitary representation $\rho$ on $V$, and $N = \dim V$.
We work over $\C$ with a Hermitian inner product, and we write $P_U$ for the orthogonal projection onto a subspace $U$.
Let $\pi$ be the associated representation of $K$ on $\End(V)$ given by
\[
  \pi(g)A = \rho(g) A \rho(g)^{-1},
\]
which is unitary for the Hilbert--Schmidt form $\langle A, B \rangle = \tr(AB^*)$.
As a representation, $\End(V) \cong V \otimes V^*$.
For subspaces $U, W \leq V$ the function we study is
\[
  X(g) = \tr\bigl(P_W \rho(g) P_U \rho(g)^{-1}\bigr) = \bigl\langle \pi(g) P_U,\ P_W \bigr\rangle.
\]

\subsection{A trace bound for intersections}
We need the following elementary bound on the trace of a product of two projections.
This trace is related to the principal angles between the subspaces, going back to Jordan \cite{Jor1875} and Björck and Golub \cite[Theorem 1]{BG73}, and in that framework the inequality below is folklore.
It also appears in the proof of Li, Qiao, Wigderson, Wigderson and Zhang \cite[Theorem 1.9(4)]{LQWWZ25}, and a weaker version is Lubotzky and Zelmanov \cite[Lemma 2.2]{LZ08}.
We give the short proof because we need exactly this form.

\begin{lemma}\label{lem:trace}
Let $P, Q$ be orthogonal projections on a finite-dimensional Hermitian space. Then
\[
  \dim\bigl(\im P \cap \im Q\bigr) \leq \tr(PQ).
\]
\end{lemma}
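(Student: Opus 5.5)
Write $I = \im P \cap \im Q$ and let $R = P_I$ be the orthogonal projection onto $I$. The plan is to split $\tr(PQ)$ into a contribution from $I$, which equals $\dim I$ exactly, and a contribution from the orthogonal complement of $I$, which is nonnegative.

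First we record the relations between $R$ and the two projections. Since $I \subseteq \im P$ and $I \subseteq \im Q$, we have $PR = R = QR$. Taking adjoints gives $RP = R = RQ$. From these relations $P$ commutes with $R$, and so $P' = P - R$ is again an orthogonal projection, namely onto $\im P \ominus I$. Similarly $Q' = Q - R$ is the orthogonal projection onto $\im Q \ominus I$. Moreover $P'R = RP' = 0$, and likewise for $Q'$.

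Next we expand the trace:
\[
  \tr(PQ) = \tr\bigl((P' + R)(Q' + R)\bigr) = \tr(P'Q') + \tr(P'R) + \tr(RQ') + \tr(R^2).
\]
The two cross terms vanish by the relations above, and $\tr(R^2) = \tr(R) = \dim I$. So it remains to show $\tr(P'Q') \geq 0$. This follows from
\[
  \tr(P'Q') = \tr(P'Q'Q'P') = \|Q'P'\|_{HS}^2 \geq 0,
\]
which uses $P'^2 = P'$, $Q'^2 = Q'$ and cyclicity of the trace. The same identity with $P, Q$ in place of $P', Q'$ shows that $\tr(PQ)$ is real and nonnegative, so the inequality is meaningful.

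There is no real obstacle in this argument. The only point needing care is checking that $R$ commutes with $P$ and $Q$, which is what makes $P - R$ and $Q - R$ projections. This comes from self-adjointness: $RP = (PR)^* = R^* = R$.

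An alternative route goes through principal angles: $\tr(PQ) = \sum_i \cos^2\theta_i$, and each principal angle $\theta_i = 0$ contributes exactly $1$. The direct computation above avoids setting up that machinery.
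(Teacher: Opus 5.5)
Your proof is correct and is essentially the paper's argument, carried out at the operator level instead of in coordinates. The paper writes $\tr(PQ) = \tr(PQP)$ with $PQP$ positive semidefinite, then reads off the diagonal in an orthonormal basis adapted to $\im P \cap \im Q$: the first $m$ diagonal entries equal $1$ and the rest are nonnegative. You split $R = P_I$ off both projections along the same decomposition $I \oplus I^{\perp}$, and this gives a little more, namely the exact identity $\tr(PQ) = \dim I + \|Q'P'\|_{\mathrm{HS}}^2$, so equality holds precisely when $\im P \ominus I$ and $\im Q \ominus I$ are orthogonal.
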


\begin{proof}
Since $P^2 = P$ and the trace is cyclic, $\tr(PQ) = \tr(P^2 Q) = \tr(PQP)$, and the operator $PQP$ is positive semidefinite because $\langle PQPv, v \rangle = \langle QPv, Pv \rangle = \|QPv\|^2 \geq 0$.

Let $Z = \im P \cap \im Q$ and $m = \dim Z$.
Choose an orthonormal basis $z_1, \dots, z_m$ of $Z$ and extend it to an orthonormal basis of the whole space.
For $z \in Z$ we have $Pz = Qz = z$, and hence
\[
  \langle PQPz, z \rangle = \langle Qz, z \rangle = 1.
\]
So the first $m$ diagonal entries of $PQP$ in this basis equal $1$, while the remaining ones are nonnegative by positivity.
Therefore $\tr(PQ) = \tr(PQP) \geq m$.
\end{proof}

\subsection{The first moment}
Here irreducibility enters, and it enters through Schur's lemma.
The average of a projection over all its translates commutes with everything, hence is a scalar, hence is determined by its trace.
Nothing else about the group is used.

\begin{theorem}\label{thm:first-moment}
Let $U, W \leq V$ be subspaces. Then there exists $g \in K$ with
\[
  \dim\bigl(W \cap \rho(g) \cdot U\bigr) \leq \frac{\dim U \cdot \dim W}{N}.
\]
\end{theorem}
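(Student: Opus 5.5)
We will average the function $X(g) = \tr\bigl(P_W \rho(g) P_U \rho(g)^{-1}\bigr)$ over $K$ with respect to the normalized Haar measure $dg$, and then combine a pointwise lower bound from \Cref{lem:trace} with the standard fact that a function cannot exceed its mean everywhere.

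First we identify the projection onto the translate. Since $\rho(g)$ is unitary, the operator $\rho(g) P_U \rho(g)^{-1}$ is an orthogonal projection (it is self-adjoint and idempotent), and its image is $\rho(g)\cdot U$. Hence \Cref{lem:trace}, applied with $P = P_W$ and $Q = \rho(g) P_U \rho(g)^{-1}$, gives for every $g \in K$
\[
  \dim\bigl(W \cap \rho(g)\cdot U\bigr) \leq X(g).
\]

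Next we compute the average of $X$. Put $A = \int_K \rho(g) P_U \rho(g)^{-1}\, dg \in \End(V)$. By invariance of Haar measure, $A$ commutes with $\rho(h)$ for every $h \in K$. Since $\rho$ is irreducible, Schur's lemma shows that $A = c \cdot \Id$ for some scalar $c$. Taking traces, and using that conjugation preserves trace, gives $cN = \tr A = \tr P_U = \dim U$. By linearity of trace and integral,
\[
  \int_K X(g)\, dg = \tr(P_W A) = \frac{\dim U}{N}\tr P_W = \frac{\dim U \cdot \dim W}{N}.
\]

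Finally, $X$ is continuous on $K$, since $\rho$ is a continuous finite-dimensional representation. A continuous function on a probability space cannot be strictly larger than its mean everywhere, so there is some $g$ with $X(g) \leq \dim U \dim W / N$. For this $g$, the pointwise bound from the first step yields the claim.

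The only point needing care is the Schur's lemma step. It requires the averaged operator to be well defined, which holds by integrating the continuous matrix-valued function against Haar measure, and it requires exact equivariance, which follows from left-invariance of the measure. Everything else is formal.
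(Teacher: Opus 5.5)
Your proof is correct and is essentially the paper's argument: average $X(g)$ against Haar measure, use Schur's lemma to get that the averaged projection equals $(\dim U/N)\Id_V$, and so the mean of $X$ is $\dim U \cdot \dim W/N$. Then pick $g$ with $X(g)$ at most this mean and conclude with \Cref{lem:trace}.
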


\begin{proof}
Let $dg$ denote the normalized Haar measure on $K$ and consider the averaging operator
\[
  \E(A) = \int_K \pi(g) A \,dg \qquad \text{for } A \in \End(V).
\]
For every $h \in K$ we have $\pi(h)\E(A) = \E(A)$ by invariance of Haar measure, so $\E(A)$ lies in the commutant of $\rho(K)$.
Since $\rho$ is irreducible, Schur's lemma gives $\E(A) \in \C \cdot \Id_V$, and comparing traces, which $\E$ preserves, we obtain
\[
  \E(A) = \frac{\tr A}{N} \Id_V.
\]
Apply this to $A = P_U$, whose trace is $\dim U$.
As $\rho$ is unitary, $\pi(g)P_U$ is precisely the orthogonal projection onto $\rho(g) \cdot U$, and therefore
\[
  \int_K X(g) \,dg = \tr\bigl(P_W \E(P_U)\bigr) = \frac{\dim U \cdot \dim W}{N}.
\]
Some $g \in K$ therefore attains at most this average value, and for such a $g$ \Cref{lem:trace} gives the claim.
\end{proof}

\begin{remark}\label{rem:provenance}
The averaging identity $\E(P_U) = (\dim U/N)\Id$ is classical.
It is also the device behind \cite[Proposition 2.1]{LZ08}.
\Cref{lem:trace} is classical as well, as discussed above.
What we have not found in the literature is the explicit conclusion of \Cref{thm:first-moment} itself.
\end{remark}

The statement transfers to complex reductive groups by the unitary trick, and there the conclusion holds not merely for some $g$ but for a generic one.

\begin{corollary}\label{cor:reductive}
Let $G$ be a complex reductive linear algebraic group with an irreducible rational representation $\rho$ on $V$, and let $U, W \leq V$. Then
\[
  \dim\bigl(W \cap \rho(g) \cdot U\bigr) \leq \frac{\dim U \cdot \dim W}{\dim V}
\]
for all $g$ in a nonempty Zariski open subset of $G$.
\end{corollary}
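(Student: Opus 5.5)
We deduce the statement from \Cref{thm:first-moment} via the unitary trick, and then upgrade ``some $g$'' to ``generic $g$'' by semicontinuity.

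First, since $G$ is complex reductive, it has a maximal compact subgroup $K$ that is Zariski dense in $G$. For instance, $G$ is the complexification of $K$, and $K$ is Zariski dense in it. After choosing a $K$-invariant Hermitian inner product on $V$ by averaging, the restriction $\rho|_K$ is unitary. It is also irreducible: a $K$-invariant subspace is stable under the Zariski closure of $\rho(K)$, which is $\rho(G)$, since stabilizers of subspaces are Zariski closed. So \Cref{thm:first-moment} applies to $K$ and produces some $g_0 \in K \subseteq G$ with $\dim(W \cap \rho(g_0) U) \leq \dim U \dim W / \dim V$.

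Second, we show that the set of good $g$ is Zariski open. Set $d = \lfloor \dim U \dim W / \dim V \rfloor$. Since dimensions are integers, the condition $\dim(W \cap \rho(g)U) \leq d$ is equivalent to
\[
  \dim\bigl(W + \rho(g)U\bigr) \geq \dim W + \dim U - d .
\]
Fix bases $w_i$ of $W$ and $u_j$ of $U$. Then $\dim(W + \rho(g)U)$ is the rank of the matrix with columns $w_i$ and $\rho(g)u_j$. Its entries are regular functions on $G$, because $\rho$ is a rational representation. Rank at least $r$ means that some $r \times r$ minor is nonzero. This is a Zariski open condition, namely the complement of the common zero locus of finitely many regular functions. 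The resulting open set contains $g_0$, so it is nonempty.

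The only step needing care is the first: we must make sure that a maximal compact subgroup is Zariski dense and that irreducibility passes to it. This is the standard content of the unitary trick. If one prefers to avoid Zariski density, an alternative is to note that a $K$-submodule is also a $\mathrm{Lie}(K)$-submodule, hence a $\mathrm{Lie}(K)\otimes\C = \mathrm{Lie}(G)$-submodule, hence $G^\circ$-stable. This handles connected $G$; for disconnected $G$ one uses that $K$ meets every component. Once irreducibility over $K$ is secured, the rest is the routine semicontinuity argument above.
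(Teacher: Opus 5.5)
Your proposal is correct and follows the paper's proof essentially step for step. You use a Zariski-dense maximal compact subgroup $K$ to transfer irreducibility and apply \Cref{thm:first-moment}, then rewrite $\dim(W \cap \rho(g)U) \leq \lfloor \dim U \dim W/\dim V \rfloor$ as a rank condition on a matrix with entries regular in $g$; this set is Zariski open, and it is nonempty because it contains the element of $K$ already found.
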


\begin{proof}
Let $K$ be a maximal compact subgroup of $G$, which is Zariski dense in $G$, and note that $V$ carries a $K$-invariant Hermitian inner product.
Any $K$-submodule of $V$ is then a $G$-submodule by Zariski density, so $V$ is irreducible as a representation of $K$ and \Cref{thm:first-moment} applies.
Put
\[
  r = \Bigl\lfloor \frac{\dim U \cdot \dim W}{\dim V} \Bigr\rfloor,
\]
so that \Cref{thm:first-moment} produces $g \in K$ with $\dim(W \cap \rho(g) \cdot U) \leq r$, a dimension being an integer.
Finally, the condition $\dim(W \cap \rho(g) \cdot U) \leq r$ is Zariski open in $g$, since it says that a certain matrix built from bases of $W$ and $\rho(g) \cdot U$ has rank at least $\dim U + \dim W - r$.
It is satisfied by some element of $K$, hence by a nonempty open subset of $G$.
\end{proof}

The same open-rank argument identifies the two additive diameters that a reductive group carries, the one taken over the group itself and the one taken over a maximal compact subgroup.

\begin{lemma}\label{lem:compact-algebraic}
Let $G$ be a complex reductive group, let $K \leq G$ be a maximal compact subgroup, and let $V$ be a rational $G$-module.
Then $\diamp[G](V, U) = \diamp[K](V, U)$ for every $U \leq V$.
In particular the common value does not depend on the choice of $K$.
\end{lemma}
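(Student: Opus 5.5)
The plan is to prove the two inequalities $\diamp[G](V,U) \leq \diamp[K](V,U)$ and $\diamp[K](V,U) \leq \diamp[G](V,U)$ separately.

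The first is immediate. Since $K \subseteq G$, any $k_1, \dots, k_m \in K$ with $\sum_i \rho(k_i) U = V$ are also admissible elements of $G$. Hence $\diamp[G](V,U) \leq \diamp[K](V,U)$. This holds whether or not the $K$-diameter is finite.

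For the reverse inequality, assume $m = \diamp[G](V,U)$ is finite and pick $g_1, \dots, g_m \in G$ with $\sum_i \rho(g_i) U = V$. Fix a basis $u_1, \dots, u_r$ of $U$. Consider the linear map
\[
  \Phi(h_1, \dots, h_m) \colon U^{m} \to V, \qquad (x_1, \dots, x_m) \mapsto \sum_i \rho(h_i) x_i ,
\]
for $(h_1, \dots, h_m) \in G^m$. In the chosen bases this is a $\dim V \times mr$ matrix whose entries are regular functions on $G^m$, because $\rho$ is rational. The condition $\sum_i \rho(h_i) U = V$ says exactly that this matrix has rank $\dim V$. That in turn says some $\dim V$-minor is nonzero, so the set
\[
  \Omega = \bigl\{(h_1, \dots, h_m) \in G^m : \textstyle\sum_i \rho(h_i) U = V\bigr\}
\]
is Zariski open in $G^m$. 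It contains $(g_1, \dots, g_m)$, so it is nonempty.

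Next I need $K^m$ to meet $\Omega$. Here I use that $K$ is Zariski dense in $G$, the same fact invoked in the proof of \Cref{cor:reductive}. I must check that $K^m$ is Zariski dense in $G^m$, since the product Zariski topology is not the product topology. The standard argument goes one coordinate at a time. Let $f$ be a regular function on $G^m$ vanishing on $K^m$. Fixing $k_2, \dots, k_m \in K$, the function $h \mapsto f(h, k_2, \dots, k_m)$ vanishes on $K$, hence on $G$. So $f$ vanishes on $G \times K^{m-1}$. Repeating this in each coordinate gives $f = 0$ on $G^m$.

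Therefore the nonempty open set $\Omega$ contains a point $(k_1, \dots, k_m) \in K^m$, which gives $\diamp[K](V,U) \leq m$. If the $G$-diameter is infinite, the reverse inequality holds trivially. This establishes equality in all cases.

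The final clause follows because $\diamp[G](V,U)$ makes no reference to $K$.

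The only point needing care is the Zariski density of the maximal compact subgroup in a (possibly disconnected) complex reductive group. For connected $G$ this is the unitary trick, since $\operatorname{Lie} G = \operatorname{Lie} K \otimes \C$. In general, a maximal compact subgroup meets every component, so density holds componentwise; I would cite this as standard, as the paper already does in \Cref{cor:reductive}.
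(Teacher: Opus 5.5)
Your proof is correct and follows the paper's argument essentially verbatim: the inclusion $K \subseteq G$ gives $\diamp[G](V,U) \leq \diamp[K](V,U)$, and for the reverse inequality the maximal-rank condition cuts out a nonempty Zariski open subset of $G^m$, which the Zariski dense set $K^m$ must meet. Your coordinate-by-coordinate check that $K^m$ is dense in $G^m$, and your remark that a maximal compact subgroup meets every component of a possibly disconnected $G$, supply details that the paper simply asserts.
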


\begin{proof}
Since $K \subseteq G$, any tuple of $K$-translates spanning $V$ is a tuple of $G$-translates, so $\diamp[G](V, U) \leq \diamp[K](V, U)$.
Conversely, suppose $g_1 U + \dots + g_m U = V$ for some $(g_1, \dots, g_m) \in G^m$.
The set of tuples $(h_1, \dots, h_m) \in G^m$ with $h_1U + \dots + h_mU = V$ is Zariski open, again by a maximal-rank condition, and we have just seen that it is nonempty.
As $K^m$ is Zariski dense in $G^m$, it meets this open set, and therefore $\diamp[K](V, U) \leq m$.
\end{proof}

\begin{example}
\Cref{thm:first-moment} fails without irreducibility for trivial reasons.
Let $K$ act on $V = U \oplus U'$ with $U, U'$ nonzero submodules, and take $W = U$.
Then $W \cap \rho(g) \cdot U = U$ for all $g \in K$, and hence
\[
\dim(W \cap \rho(g) \cdot U) = \dim U
>
\frac{(\dim U)^2}{\dim V}.
\]
\end{example}

\begin{example} \label{ex:heisenberg_12}
The bound in \Cref{thm:first-moment} can be attained for all $g$.
Here is an example.
Let $\zeta = e^{2\pi i/12}$, and let $S, C$ act on $V = \C[\Z/12]$, with basis $(e_j)_{j \in \Z/12}$, by
\[
  S e_j = e_{j+1}
  \qquad \text{and} \qquad
  C e_j = \zeta^{j} e_j .
\]
The group $H_{12} = \langle S, C \rangle$ is a Heisenberg group over $\Z/12$, see \Cref{subsec:fin-heis} for details.
Every element of $H_{12}$ has the form $\zeta^{c}S^{a}C^{b}$ with $a,b,c \in \Z/12$, and these are pairwise distinct.
The space $V$ is an irreducible representation of $H_{12}$ of dimension $12$: the eigenvalues $\zeta^j$ of $C$ are pairwise distinct, so every invariant subspace is spanned by a subset of the basis, and stability under $S$ makes that subset a union of $\Z/12$-orbits under translation, hence empty or everything.

For a nonempty $A \subseteq \Z/12$ let $U_A = \langle e_j \mid j \in A \rangle$ be the coordinate subspace on $A$.
Take
  \[
    A = \{0,4,8\}
    \qquad \text{and} \qquad
    B = \{0,3,6,9\}
  \]
and put $U = U_A$ and $W = U_B$.
Scalars act trivially on subspaces and $C^{b}$ is diagonal, so $C^{b}U_A = U_A$, while $S^{a}U_A = U_{A+a}$.
Hence
  \[
    W \cap g \cdot U = U_{B \cap (A+a)}
    \qquad \text{for } g = \zeta^{c}S^{a}C^{b}.
  \]
Since $3$ and $4$ are coprime, the intersection $B \cap (A+a)$ is a singleton for every $a \in \Z/12$.
Therefore
  \[
    \dim \bigl( W \cap g \cdot U \bigr)
    = 1
    = \frac{\dim U \cdot \dim W}{\dim V}
    \qquad \text{for every } g \in H_{12}.
  \]
\end{example}

\section{Covering and the complexity of a representation}\label{sec:covering}

\subsection{The covering theorem}
The estimates above are statements about one step.
Iterating the first moment in the orthogonal complement turns it into a covering theorem, and the mechanism is simply that a constant factor of the codimension is destroyed at every step.
We refer to this iteration as the \emph{growth process}.

\begin{lemma}\label{lem:growth}
Let $K$ be a compact group with an irreducible unitary representation on $V$, put $N = \dim V$, and let $U \leq V$ be nonzero with $\alpha = \dim U/N$.
Then there are $g_1, g_2, \ldots \in K$ such that the subspaces $W_k = \sum_{i \leq k} \rho(g_i) \cdot U$ satisfy
\begin{equation}\label{eq:decay}
  \dim W_k^\perp
  < N e^{-\alpha k} \quad \text{for all } k \geq 1 .
\end{equation}
\end{lemma}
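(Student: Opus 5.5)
Choose the $g_i$ greedily using \Cref{thm:first-moment}, and track the codimension $c_k = \dim W_k^\perp = N - \dim W_k$.

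\emph{Recursion.} Given $W_k$ (with $W_0 = 0$), apply \Cref{thm:first-moment} with $W = W_k$ to get $g_{k+1} \in K$ such that
\[
  \dim\bigl(W_k \cap \rho(g_{k+1}) U\bigr) \le \frac{\dim U \cdot \dim W_k}{N} = \alpha (N - c_k).
\]
Then
\[
  \dim W_{k+1} = \dim W_k + \dim U - \dim\bigl(W_k \cap \rho(g_{k+1})U\bigr) \ge N - c_k + \alpha N - \alpha(N-c_k) = N - c_k + \alpha c_k .
\]
Hence $c_{k+1} \le (1-\alpha) c_k$.

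\emph{Conclusion.} Since $c_0 = N$, induction gives $c_k \le N(1-\alpha)^k$. Use $1 - \alpha < e^{-\alpha}$, which holds strictly because $\alpha > 0$. For $k \ge 1$ this gives $(1-\alpha)^k < e^{-\alpha k}$, so $c_k \le N(1-\alpha)^k < N e^{-\alpha k}$.

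There is one edge case: $\alpha = 1$, i.e.\ $U = V$. Then $(1-\alpha)^k = 0$, so $c_k = 0 < N e^{-k}$, and the strict inequality still holds.

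The only mildly delicate point is this strictness. It comes from the strict inequality $1-\alpha < e^{-\alpha}$ for $\alpha \in (0,1]$, together with $N > 0$. Beyond that, the argument is just the dimension formula $\dim(A+B) = \dim A + \dim B - \dim(A \cap B)$ combined with the first moment bound.
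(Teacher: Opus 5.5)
Your proof is correct and follows the paper's argument: choose translates greedily via the first-moment bound to get $c_{k+1} \le (1-\alpha)c_k$, then use $1-\alpha < e^{-\alpha}$. The only difference is cosmetic. The paper applies \Cref{thm:first-moment} to the complements $W_k^\perp$ and $U^\perp$ and uses unitarity to write $W_{k+1}^\perp = W_k^\perp \cap \rho(g_{k+1}) \cdot U^\perp$, while you apply it to $W_k$ and $U$ and use the dimension formula; the two bounds are equivalent because $\frac{wu}{N} - (w+u-N) = \frac{(N-w)(N-u)}{N}$.
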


\begin{proof}
Set $W_0 = 0$.
Given $W_k$, apply \Cref{thm:first-moment} to the pair of subspaces $W_k^\perp$ and $U^\perp$, which produces $g_{k+1} \in K$ with
\[
  \dim\bigl(W_k^\perp \cap \rho(g_{k+1}) \cdot U^\perp\bigr) \leq \dim W_k^\perp \cdot \frac{\dim U^\perp}{N} = (1 - \alpha) \dim W_k^\perp ,
\]
and let $W_{k+1} = W_k + \rho(g_{k+1}) \cdot U$.
Since $\rho$ is unitary we have $\rho(g) \cdot U^\perp = (\rho(g) \cdot U)^\perp$, and therefore
\[
  W_{k+1}^\perp = W_k^\perp \cap \bigl(\rho(g_{k+1}) \cdot U\bigr)^\perp = W_k^\perp \cap \rho(g_{k+1}) \cdot U^\perp .
\]
Thus $\dim W_{k+1}^\perp \leq (1 - \alpha) \dim W_k^\perp$, and it follows by induction from $\dim W_0^\perp = N$ that $\dim W_k^\perp \leq N(1 - \alpha)^k$ for all $k \geq 1$.
The claim now follows from the fact that $1 - \alpha < e^{-\alpha}$ for $0 < \alpha < 1$, while for $\alpha = 1$ the left-hand side is zero.
\end{proof}

Running the growth process all the way to the bottom is wasteful, however.
Once the codimension has dropped below $\dim U$, a single translate could in principle finish the job, and in any case it can always be made to gain at least one dimension.
Balancing the two is what produces the middle term below, and it is the term that matters when $U$ is small.
The hypothesis $N \geq 2$ only excludes the one-dimensional representations, whose diameter is $1$.

\begin{theorem}\label{thm:covering}
Let $K$ be a compact group with an irreducible unitary representation on $V$, put $N = \dim V \geq 2$ and $u = \dim U$ for a nonzero subspace $U \leq V$. Then
\[
  \diamp[K](V, U) \leq \min\Bigl\{ \Bigl\lceil \frac{N \ln N}{u} \Bigr\rceil, \quad \frac{N(1 + \ln u)}{u} + 1, \quad N - u + 1 \Bigr\}.
\]
The same bounds hold for an irreducible rational representation of a complex reductive algebraic group, with generic translates.
\end{theorem}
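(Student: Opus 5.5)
We prove each of the three bounds separately; the theorem is their minimum. All three rest on \Cref{lem:growth} and on one observation. If $W \neq V$, then some translate $\rho(g)\cdot U$ is not contained in $W$. Indeed, the sum of all translates of $U$ is a nonzero submodule, hence equals $V$ by irreducibility. So adding a suitable translate always raises $\dim W$ by at least one.

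\emph{The first bound.} Put $k = \lceil N\ln N/u\rceil$ and apply \Cref{lem:growth}. Since $\alpha = u/N$, we have $\alpha k \geq \ln N$. Then \eqref{eq:decay} gives $\dim W_k^\perp < N e^{-\ln N} = 1$. Hence $W_k^\perp = 0$, that is, $W_k = V$, and so $\diamp[K](V,U) \leq k$.

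\emph{The third bound.} Start from $W_1 = U$, and at each step add a translate not contained in the current sum. Each step raises the dimension by at least one, so after at most $N-u$ further translates the sum is $V$. This gives $N-u+1$ translates in total.

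\emph{The middle bound.} Run the growth process for $k$ steps, where
\[
  k = \Bigl\lceil \tfrac{N}{u}\ln u \Bigr\rceil .
\]
Then $\alpha k \geq \ln u$, and \eqref{eq:decay} gives $\dim W_k^\perp < N/u$, the bound being strict. Now finish greedily, one dimension per translate. This needs at most $\dim W_k^\perp \leq \lceil N/u\rceil - 1 \leq N/u$ further translates. The total is therefore at most
\[
  \tfrac{N}{u}\ln u + 1 + \tfrac{N}{u} = \tfrac{N(1+\ln u)}{u} + 1 .
\]
When $u = 1$ we take $k = 0$, and the greedy argument alone gives $N \leq N + 1$.

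The main obstacle is bookkeeping, chiefly the ceiling in $k$. The strict inequality in \eqref{eq:decay} is what absorbs the ceiling and the $+1$ without loss. If one wants to do better, the final stage could instead use \Cref{thm:first-moment} in the form of \Cref{thm:A}, adding $(1-\dim W/N)u$ dimensions per step. However, the simple one-dimension-per-step count already suffices for the stated bound. We should also check $N \geq 2$ in the first bound so that $\ln N > 0$. For $u = N$ all three bounds are trivially at least $1$.

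\emph{The reductive case.} For a complex reductive group $G$, \Cref{lem:compact-algebraic} identifies $\diamp[G]$ with $\diamp[K]$. Moreover, the set of tuples $(h_1,\dots,h_m) \in G^m$ with $h_1U+\dots+h_mU = V$ is Zariski open and nonempty, so generic tuples of translates work. This gives the last sentence of the theorem.
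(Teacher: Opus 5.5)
Your proof is correct and follows the paper's argument essentially step for step: the greedy one-dimension-per-translate count gives the third bound, $k=\lceil N\ln N/u\rceil$ in \Cref{lem:growth} gives the first, and the middle bound comes from the same split at $k=\lceil (N/u)\ln u\rceil$ followed by greedy completion. In the reductive case you should also note, as in the proof of \Cref{cor:reductive}, that $V$ remains irreducible for the Zariski-dense maximal compact subgroup $K$ and carries a $K$-invariant inner product. This is what lets you apply the compact case before invoking Zariski openness.
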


\begin{proof}
For the third bound, let $W_r = \rho(g_1) \cdot U + \dots + \rho(g_r) \cdot U$ be a sum of $r$ translates of $U$ and suppose $W_r \neq V$.
Then $\rho(g) \cdot U \not\subseteq W_r$ for some $g \in K$, since otherwise the sum of all translates of $U$ would lie in $W_r$, whereas that sum is a nonzero submodule and hence all of $V$ by irreducibility.
So the next translate can always be chosen to gain a dimension,
\begin{equation}\label{eq:gain}
  \dim\bigl(W_r + \rho(g) \cdot U\bigr) \geq \dim W_r + 1 .
\end{equation}
Starting from $W_1 = U$ of dimension $u$, at most $N - u$ further translates are needed, which is the third bound.

We now turn to the other two bounds.
Put $\alpha = u/N$ and let $(W_k)$ be the growth process of \Cref{lem:growth}, so that $W_k$ is a sum of $k$ translates of $U$.

For the first bound, take $k = \lceil N \ln N / u \rceil$, so that $\alpha k \geq \ln N$.
Then $\dim W_k^\perp < N e^{-\alpha k} \leq N e^{-\ln N} = 1$ by \eqref{eq:decay}.
A dimension is an integer, so $W_k = V$.

For the second bound, we combine the two mechanisms.
When $u = 1$ it follows from the third bound, so assume $\alpha N = u \geq 2$.
Run the growth process for $k_1 = \lceil \ln(\alpha N)/\alpha \rceil$ steps, and observe from \eqref{eq:decay} that the codimension then satisfies
\[
  \dim W_{k_1}^\perp < N e^{-\alpha k_1} \leq N e^{-\ln(\alpha N)} = \frac{1}{\alpha}.
\]
We finish with at most $\lceil 1/\alpha \rceil - 1$ further steps, each gaining a dimension by \eqref{eq:gain}.
Since $\lceil x \rceil \leq x + 1$ and $\lceil x \rceil - 1 \leq x$, the total is at most
\[
  \frac{\ln(\alpha N)}{\alpha} + 1 + \frac{1}{\alpha} = \frac{N(1 + \ln u)}{u} + 1,
\]
using $\alpha N = u$, as desired.

Finally, let $G$ be complex reductive with an irreducible rational representation on $V$, and let $K$ be a maximal compact subgroup.
As in the proof of \Cref{cor:reductive}, $V$ is irreducible as a representation of $K$, so for each of the three bounds the compact case produces a tuple $(g_1, \dots, g_k) \in K^k$ with $\sum_i \rho(g_i) \cdot U = V$.
This condition asks a matrix built from bases of the spaces $\rho(g_i) \cdot U$ to have maximal rank, so it is Zariski open in $G^k$ and holds for a nonempty open set of tuples.
\end{proof}

\begin{remark}\label{rem:haar}
Over a connected compact group the good translates are of full measure, not merely generic.
Let $G$ be connected reductive with maximal compact subgroup $K$, and let
$Z \subseteq G$ be the complement of the open set produced by \Cref{cor:reductive}.
Since $K$ is connected and Zariski dense in $G$, $Z \cap K$ is a proper closed
real analytic subset of the connected manifold $K$, hence of measure zero.
So Haar almost every $g \in K$ satisfies the bound of \Cref{thm:first-moment},
and the same argument applied to $K^k$ shows Haar almost every tuple works as well.
\end{remark}

\begin{corollary}\label{cor:small}
For every $u_0$ there is $C = C(u_0)$, and one may take $C = 2 + \ln u_0$, such that every irreducible unitary representation $V$ of a compact group $K$ satisfies
\[
  \diamp[K](V, U) \leq C \frac{\dim V}{\dim U}
\]
for all nonzero $U \leq V$ with $\dim U \leq u_0$.
Moreover, we may take $C(1) = 1$, meaning that the diameter is optimal for one-dimensional subspaces.
\end{corollary}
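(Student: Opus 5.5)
The plan is to read the general bound off the middle term of \Cref{thm:covering}, and to handle the one-dimensional case by an argument that is sharper than the one given by the covering theorem.

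\emph{General bound.} Put $N = \dim V$ and $u = \dim U$, so that $1 \leq u \leq u_0$. If $N = 1$ the diameter is $1 \leq C\, N/u$, so we may assume $N \geq 2$. The middle bound of \Cref{thm:covering} gives
\[
\diamp[K](V,U) \leq \frac{N(1+\ln u)}{u} + 1 .
\]
Since $u \leq N$, the additive $1$ is at most $N/u$. Hence the diameter is at most $(2+\ln u)\,N/u \leq (2+\ln u_0)\,N/u$, because $\ln$ is increasing. This gives $C(u_0) = 2 + \ln u_0$.

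\emph{The case $u = 1$.} Here we need exactly $N$ translates, which is optimal. The third bound of \Cref{thm:covering}, namely $N - u + 1$, equals $N$ when $u = 1$, so this is immediate. For a direct argument, write $U = \C v$. The translates $\rho(g)v$ span a nonzero submodule, so by irreducibility they span $V$. We can therefore extract a basis $\rho(g_1)v, \dots, \rho(g_N)v$, and then $\sum_i \rho(g_i) \cdot U = V$ with $N$ summands. Combined with the trivial lower bound $\lceil N/1 \rceil = N$, this shows the diameter equals $N$, so $C(1) = 1$ works.

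There is no real obstacle. The only care needed is absorbing the $+1$ into $N/u$, which uses $u \leq N$, and noting the degenerate case $N = 1$, which the covering theorem excludes.
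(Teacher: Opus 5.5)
Your proposal is correct and follows the paper's proof essentially verbatim: the middle bound of \Cref{thm:covering} with the $+1$ absorbed via $1 \leq N/u$, the trivial case $N = 1$ set aside, and the third bound $N - u + 1 = N$ matched against the trivial lower bound when $u = 1$. Your alternative direct argument for $u = 1$, extracting a basis from the translates $\rho(g)v$, is also valid.
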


\begin{proof}
Put $N = \dim V$ and $u = \dim U$, and note there is nothing to prove for $N = 1$.
The middle bound of \Cref{thm:covering} together with $1 \leq N/u$ gives at most $(2 + \ln u)N/u$ translates, and $\ln u \leq \ln u_0$.
When $u = 1$ the third bound gives at most $N$ translates, while each translate adds at most one dimension, so at least $N$ are needed.
\end{proof}

\begin{remark}\label{rem:doubling}
The logarithm is intrinsic to the multiplicative mechanism and cannot be removed by re-optimizing where one stops.
Suppose we run the growth process only until the codimension drops to $\gamma N$, which costs about $\ln(1/\gamma)/\alpha$ translates of $U$, and then switch to doubling the accumulated space, $W \mapsto W + \rho(g) \cdot W$.
\Cref{thm:first-moment} applied to the pair $(W^\perp, W^\perp)$ squares the relative codimension at each doubling, so about $\log_2\bigl(\ln N / \ln(1/\gamma)\bigr)$ doublings are needed to finish, and since each of them doubles the number of translates in play, they multiply that number by about $\ln N / \ln(1/\gamma)$.
The total is again of order
\[
  \frac{\ln(1/\gamma)}{\alpha} \cdot \frac{\ln N}{\ln(1/\gamma)} = \frac{\ln N}{\alpha},
\]
irrespective of $\gamma$.
What breaks the pattern is a completion step that is structural, and this is exactly what \cite[Proposition 5.2]{JS25} provides.
\end{remark}

\subsection{The complexity of a representation}
The bounds of \Cref{thm:covering} are off from the trivial lower bound $\lceil \dim V / \dim U \rceil$ by a logarithm.
It is convenient to package the discrepancy into one number.

\begin{definition}\label{def:complexity}
Let $H$ act irreducibly on $V$.
For a nonzero subspace $U \leq V$ put
\[
  \rr(V, U) = \frac{\diamp[H](V, U)}{\lceil \dim V / \dim U \rceil},
  \qquad
  \cc(V) = \max_{0 \neq U \leq V} \rr(V, U),
\]
and call $\rr(V, U)$ the \emph{covering ratio} of the pair $(V, U)$ and $\cc(V)$ the \emph{covering complexity} of $V$.
Thus $\rr(V, U) \geq 1$ always, and $\cc(V) = 1$ says that every subspace of $V$ has an additive diameter equal to the trivial bound.
\end{definition}

The acting group is part of the invariant, and we leave it out of the notation because it is usually clear from the context.
Where the same space carries actions of two different groups and the comparison is the point, as in \Cref{subsec:fin-ext,subsec:fin-2trans}, we write $\cc_G(V)$ and display the group.
There is no ambiguity between the two readings of the definition, since $\diamp[G](V, U) = \diamp[K](V, U)$ for a maximal compact subgroup $K \leq G$ by \Cref{lem:compact-algebraic}, and hence both $\rr$ and $\cc$ agree as well.

The invariant is bounded by the covering theorem, and the bound is uniform in the group.

\begin{proposition}\label{prop:cxupper}
Let $H$ act irreducibly on $V$, either as a compact group acting unitarily or as a complex reductive group acting rationally. Then
\[
1 \leq \cc(V) \leq 2 + \ln \dim V.
\]
\end{proposition}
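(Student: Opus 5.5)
The lower bound $\cc(V) \geq 1$ is immediate. By \Cref{def:complexity}, $\rr(V, U) \geq 1$ for every nonzero $U$: each translate $\rho(g) \cdot U$ has dimension $\dim U$, so at least $\lceil \dim V/\dim U \rceil$ of them are needed to sum to $V$.

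For the upper bound we may reduce to the compact case. If $H$ is complex reductive, pick a maximal compact subgroup $K$. Then $V$ is irreducible for $K$, as in the proof of \Cref{cor:reductive}, and the diameters over $H$ and $K$ coincide by \Cref{lem:compact-algebraic}. So it suffices to show $\rr(V, U) \leq 2 + \ln N$ for every nonzero $U \leq V$, where $N = \dim V$ and $u = \dim U$.

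If $N = 1$, then $U = V$ and the ratio is $1$. Assume $N \geq 2$. We use the middle bound of \Cref{thm:covering}:
\[
\diamp[K](V, U) \leq \frac{N(1 + \ln u)}{u} + 1 .
\]
Since $u \leq N$, we have $\ln u \leq \ln N$. Writing $t = \lceil N/u \rceil \geq N/u \geq 1$, this gives
\[
\diamp[K](V, U) \leq t(1 + \ln N) + 1 \leq t(2 + \ln N),
\]
where the last step uses $1 \leq t$. Dividing by $t$ yields $\rr(V, U) \leq 2 + \ln N$. Taking the maximum over $U$ gives $\cc(V) \leq 2 + \ln \dim V$.

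There is no real obstacle here. The only point needing a little care is absorbing the additive $+1$ into the ceiling, and that follows from $\lceil N/u \rceil \geq 1$.
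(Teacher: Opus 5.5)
Your proof is correct and follows the paper's argument: reduce to a maximal compact subgroup via \Cref{lem:compact-algebraic} and apply the middle bound of \Cref{thm:covering}. The only difference is bookkeeping: the paper divides by $N/u \leq \lceil N/u\rceil$ to get $\rr(V,U) \leq 1 + \ln u + u/N$, while you bound $N/u \leq t$ and absorb the $+1$ using $t \geq 1$, which gives the same conclusion.
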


\begin{proof}
The lower bound is the trivial one, and $\cc(V) = 1 \leq 2$ when $\dim V = 1$, so assume $\dim V \geq 2$.
For the upper bound write $N = \dim V$ and $u = \dim U$, and pass to a maximal compact subgroup by \Cref{lem:compact-algebraic} if the acting group is reductive.
Since $\lceil N/u \rceil \geq N/u$, the middle bound of \Cref{thm:covering} gives
\[
  \rr(V, U) \leq \frac{u}{N}\Bigl(\frac{N(1 + \ln u)}{u} + 1\Bigr) = 1 + \ln u + \frac{u}{N} \leq 2 + \ln N. \qedhere
\]
\end{proof}

\subsection{Equivariant morphisms}
\Cref{thm:covering} feeds directly into \cite[Theorem 7.8]{JS25} and bounds the additive diameter $\diamp(V, \im f)$ of an image, the least $m$ for which the $m$-fold sum $\im f + \dots + \im f$ is all of $V$, in terms of a single dimension.
Note that this is the ordinary additive diameter of the subset $\im f$, with no translates involved, in contrast with the group-additive diameter of the subspace $\im(D_w f)$.

\begin{corollary}\label{cor:equivariant}
Let $G$ be a complex reductive linear algebraic group with an irreducible representation on $V$, put $N = \dim V$, and let $f \colon W \to V$ be a $G$-equivariant algebraic morphism.
Then for any $w \in W$ with $D_w f \neq 0$,
\[
  \diamp\bigl(V, \im f\bigr) \leq 2 \frac{N\bigl(1 + \ln \dim \im(D_w f)\bigr)}{\dim \im(D_w f)} + 2.
\]
\end{corollary}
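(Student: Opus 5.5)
The plan is to combine the inequality $\diam_+(V, \im f) \leq 2 \cdot \diamp[G](V, \im(D_w f))$ from \cite[Theorem 7.8]{JS25}, quoted in the introduction, with the middle bound of \Cref{thm:covering}. First I would check that the hypotheses of \cite[Theorem 7.8]{JS25} hold here. Concretely, $f$ is $G$-equivariant, and I would fix a point $w$ with $D_w f \neq 0$. The target $V$ is irreducible, so the group-additive diameter of the nonzero subspace $U = \im(D_w f)$ is finite.

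Next I apply \Cref{thm:covering} in its reductive form to $U$, with $u = \dim U \geq 1$. If $N \geq 2$, the middle term gives
\[
  \diamp[G](V, U) \leq \frac{N(1 + \ln u)}{u} + 1 .
\]
Doubling this and substituting into the inequality of \cite[Theorem 7.8]{JS25} yields exactly the stated bound
\[
  2 \cdot \frac{N\bigl(1 + \ln u\bigr)}{u} + 2 .
\]

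The degenerate case $N = 1$ needs a separate check. There $u = 1$, and $\diamp[G](V, U) = 1 \leq N(1 + \ln 1)/1 + 1 = 2$, so the bound still holds.

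The main obstacle is the exact form of \cite[Theorem 7.8]{JS25}. If it requires $f(0) = 0$, or homogeneity, or a particular choice of $w$, I would verify that it applies to an arbitrary algebraic equivariant morphism and arbitrary $w$. If it does not, I would reduce to that case. The likely route is to note that $\im f$ is a $G$-stable constructible set whose closure contains the relevant translates $f(w) + D_w f(\cdot)$ to first order. The factor $2$ then comes from writing each vector of a translate $g \cdot U$ as a difference of two points of $\im f$, using equivariance $f(gw) = g f(w)$. Beyond that, the argument is pure bookkeeping.
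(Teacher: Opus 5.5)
Your proposal is correct and is exactly the paper's argument: apply the middle bound of \Cref{thm:covering} in its reductive form to $U = \im(D_w f)$, then double it via \cite[Theorem 7.8]{JS25}. Your separate check of $N = 1$ is a harmless addition that the paper omits. The hedging in your last paragraph is unnecessary, since the paper invokes \cite[Theorem 7.8]{JS25} as stated for an arbitrary equivariant polynomial map $f$ and any $w$.
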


\begin{proof}
Apply \Cref{thm:covering} to $U = \im(D_w f)$ and then \cite[Theorem 7.8]{JS25}, which states that $\diamp(V, \im f) \leq 2\diamp[G](V, \im(D_w f))$.
\end{proof}

In \Cref{subsec:equivariant-witness} we give an example where the bound of the corollary is sharp up to a constant factor, which we found surprising.

\section{Representations of optimal complexity}\label{sec:optimal}

We collect here some examples of representations where $\cc(V) = 1$ and thus every subspace has an optimal additive diameter.
One property implies this and is easier to check, namely that translates meet a given subspace as sparsely as the dimensions allow.
We begin with it, since it is what one would want the intersection bound to say.
Two families follow, the irreducible representations of $\SL_2(\C)$ and the standard representations of the symmetric groups, and the property holds in both.

\subsection{Transversality}\label{subsec:transversal}
Let $N = \dim V$.
Counting dimensions, every pair of subspaces $W, U \leq V$ and every $g$ satisfy
\[
  \dim\bigl(W \cap g \cdot U\bigr) \geq \max\bigl(0, \dim W + \dim U - N\bigr),
\]
since $\dim(W \cap g \cdot U) = \dim W + \dim U - \dim(W + g \cdot U)$ and the sum has dimension at most $N$.
Call $V$ \emph{transversal} if for every pair $W, U \leq V$ some $g$ attains this minimum, so that a general translate of $U$ meets $W$ as sparsely as the dimensions permit.
The name is the one used in intersection theory, where Kleiman's theorem \cite{Kle74} gives exactly this for a group acting transitively on the ambient variety.
Here the group acts non-transitively on $V$, and transversality can genuinely fail, by \Cref{rem:transversality}.

The intersection bound is the weak form of transversality.
Indeed
\[
  \frac{\dim W \cdot \dim U}{N} - \bigl(\dim W + \dim U - N\bigr)
  = \frac{(N - \dim W)(N - \dim U)}{N} \geq 0,
\]
so the right-hand side of \Cref{thm:first-moment} always exceeds the transversal value, with equality only when $W$ or $U$ is $0$ or all of $V$.

\begin{proposition}\label{prop:transversal}
Let $K$ be a compact group with an irreducible unitary representation on $V$.
If $V$ is transversal then $\cc(V) = 1$.
\end{proposition}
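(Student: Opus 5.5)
We will show directly that for every nonzero $U \leq V$ one has $\diamp[K](V, U) \leq \lceil N/u \rceil$, where $N = \dim V$ and $u = \dim U$. Together with the trivial lower bound $\diamp[K](V,U) \geq \lceil N/u \rceil$, this gives $\rr(V,U) = 1$ for every $U$, and hence $\cc(V) = 1$. The argument is a greedy construction in which transversality is applied once at each step.

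Put $W_0 = 0$. Given a sum $W_k$ of $k$ translates of $U$, we apply transversality to the pair $(W_k, U)$. This produces $g_{k+1} \in K$ with
\[
  \dim\bigl(W_k \cap \rho(g_{k+1}) \cdot U\bigr) = \max\bigl(0, \dim W_k + u - N\bigr).
\]
We then set $W_{k+1} = W_k + \rho(g_{k+1}) \cdot U$. By the dimension formula for a sum,
\[
  \dim W_{k+1} = \dim W_k + u - \dim\bigl(W_k \cap \rho(g_{k+1}) \cdot U\bigr) = \min\bigl(N, \dim W_k + u\bigr).
\]
An induction on $k$ then gives $\dim W_k = \min(N, ku)$. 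Taking $k = \lceil N/u \rceil$, we have $ku \geq N$, so $W_k = V$. This shows $\diamp[K](V,U) \leq \lceil N/u\rceil$, as required.

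There is no real obstacle here. The one point to check is that transversality is invoked with the correct pair of subspaces. At step $k$ the subspace $W_k$ is arbitrary, and the translate is applied to $U$ itself, which matches the definition exactly, since transversality is quantified over all pairs $W, U$. So the greedy choice always achieves the maximal possible gain, which is the full $u$ dimensions until the final step, where it fills the remaining codimension.
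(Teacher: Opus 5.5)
Your proof is correct and follows the same greedy strategy as the paper, applying transversality once per step to the accumulated sum and $U$. The only difference is the last step: the paper finishes by applying transversality to the orthogonal complements $W_{m-1}^{\perp}$ and $U^{\perp}$ and then uses unitarity, whereas you use the case $\dim W + \dim U > N$ of the definition directly. Your version is equally valid and does not need unitarity.
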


\begin{proof}
Let $U \leq V$ be nonzero, put $u = \dim U$ and $m = \lceil N/u \rceil$, and note that $(m-1)u < N \leq mu$.
There is nothing to prove for $m = 1$, so assume $m \geq 2$.

We first build a direct sum of $m-1$ translates.
Suppose $W_j = g_1 \cdot U \oplus \cdots \oplus g_j \cdot U$ has been found with $j \leq m-2$, starting from $W_1 = U$.
Since $\dim W_j + u = (j+1)u \leq (m-1)u < N$, transversality applied to the pair $W_j$ and $U$ produces $g_{j+1}$ with $\dim(W_j \cap g_{j+1} \cdot U) = 0$, and we may set $W_{j+1} = W_j \oplus g_{j+1} \cdot U$.
Iterating gives $W_{m-1}$ of dimension $(m-1)u$.

One more translate finishes, and here we use transversality in the complements.
The subspaces $W_{m-1}^{\perp}$ and $U^{\perp}$ have dimensions summing to $2N - mu \leq N$, so transversality produces $g$ with $W_{m-1}^{\perp} \cap g \cdot U^{\perp} = 0$.
As $\rho$ is unitary, $g \cdot U^{\perp} = (g \cdot U)^{\perp}$, so $(W_{m-1} + g \cdot U)^{\perp} = 0$ and $W_{m-1} + g \cdot U = V$.
Thus $m$ translates of $U$ span $V$, which is the trivial lower bound, and $\rr(V, U) = 1$ for every $U$.
\end{proof}

\begin{example}\label{ex:not-transversal-but-optimal}
The converse of \Cref{prop:transversal} fails, so transversality is strictly stronger than $\cc(V) = 1$.
Here is an example.

Let $H_4 = \langle S, C \rangle$ be the Heisenberg group of \Cref{ex:heisenberg_12} with $12$ replaced by $4$, so that $H_4$ acts irreducibly on $V = \C[\Z/4]$ by $S e_j = e_{j+1}$ and $Ce_j = i^{j}e_j$.
The translates of a coordinate subspace $U_A$ are exactly the $U_{A+a}$ with $a \in \Z/4$.
Transversality fails at the pair
\[
  W = \langle e_1, e_3 \rangle
  \qquad \text{and} \qquad
  U = \langle e_0, e_1 \rangle,
\]
since $\dim W + \dim U = 4$ but $\dim(W \cap g \cdot U) = 1$ for every $g$.

The complexity is nevertheless optimal.
Let now $U \leq V$ be an arbitrary nonzero subspace and put $u = \dim U$.
It suffices to consider the case $u = 2$.
Indeed, $u = 1$ is optimal by \Cref{cor:small}, for $u = 3$ the trivial bound is $2$ and the third bound of \Cref{thm:covering} gives $N - u + 1 = 2$ as well, and $u = 4$ means $U = V$.
So we must produce $g$ with $U \oplus gU = V$.
Pass to Pl\"ucker coordinates.
Writing $\omega \in \Lambda^2 V$ for the Pl\"ucker point of $U$ and $\omega \wedge \omega'$ for the pairing $\Lambda^2 V \times \Lambda^2 V \to \Lambda^4 V \cong \C$, we have $U \cap gU = 0$ if and only if $\omega \wedge g\omega \neq 0$.
So a subspace with no disjoint translate is a nonzero solution of the sixteen quadrics
\[
  \omega \wedge ( S^{s}C^{t} ) \omega = 0,
  \qquad s, t \in \Z/4,
\]
of which the $(0,0)$-equation is the Pl\"ucker relation $\omega \wedge \omega = 0$.
Seven of these already force $\omega = 0$.
Write
\[
  (a, b, c, d, e, f) = (p_{01}, p_{02}, p_{03}, p_{12}, p_{13}, p_{23})
\]
for the Pl\"ucker coordinates of $\omega$ in the basis $e_0, e_1, e_2, e_3$, and index the quadrics by the pairs $(s,t)$.
Up to nonzero scalar factors, the equations for $(0,0)$, $(0,1)$ and $(0,2)$ read
\[
  af - be + cd = 0,
  \qquad
  af - cd = 0,
  \qquad
  af + be + cd = 0 .
\]
The difference and the sum of the outer two give $be = 0$ and $af + cd = 0$, which together with the middle one give $af = be = cd = 0$.
The equations for $(2,0)$ and $(2,1)$ read
\[
  a^2 - c^2 - d^2 + f^2 + 2be = 0,
  \qquad
  a^2 + c^2 + d^2 + f^2 = 0,
\]
so their sum and difference give $a^2 + f^2 = 0$ and $c^2 + d^2 = 0$ once $be = 0$ is used.
Now $af = 0$ makes one of $a, f$ vanish, and then $a^2 = -f^2$ makes the other vanish too. The same argument with $cd = 0$ handles $c$ and $d$.
Thus $a = c = d = f = 0$, and the equations for $(1,0)$ and $(1,1)$ collapse to $-b^2 + e^2 = 0$, $b^2 + e^2 = 0$, whence $b = e = 0$ and $\omega = 0$, a contradiction.
Thus every two-dimensional subspace of $V$ has a disjoint translate and $\diamp[H_4](V, U) = 2$.
Hence $\cc(V) = 1$ while $V$ is not transversal.
\end{example}

\subsection{Exterior powers and transversality}\label{subsec:fin-std}

Transversality can be checked through exterior powers.

\begin{lemma}\label{lem:wedge-transversal}
Let $K$ be a compact group with a unitary representation on $V$ such that $\Lambda^j V$ is irreducible for every $0 < j < N = \dim V$.
Then $V$ is transversal.
\end{lemma}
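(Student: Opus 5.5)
The plan is to reduce transversality to a nonvanishing statement in an exterior power, and then use irreducibility of $\Lambda^j V$ to show that the translates of a single Plücker vector span enough of $\Lambda^j V$. Let $W, U \leq V$ with $w = \dim W$ and $u = \dim U$. If $u \in \{0, N\}$ or $w \in \{0, N\}$, the intersection $W \cap g \cdot U$ does not depend on $g$ and already has the minimal dimension, so there is nothing to prove. We therefore assume $0 < u, w < N$ and split into the cases $w + u \leq N$ and $w + u > N$.

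\emph{Case $w + u \leq N$.} Here we need $g$ with $W \cap g \cdot U = 0$.
\begin{enumerate}
\item Let $\omega_W \in \Lambda^w V$ and $\omega_U \in \Lambda^u V$ be Plücker vectors, i.e.\ wedges of bases.
\item The translate $g \cdot U$ has Plücker vector $\Lambda^u\rho(g)\,\omega_U$. Since $W \cap g \cdot U = 0$ exactly when the concatenated bases are linearly independent, this happens if and only if
\[
  \omega_W \wedge \Lambda^u\rho(g)\,\omega_U \neq 0 \quad \text{in } \Lambda^{w+u} V .
\]
\item Suppose for contradiction that this wedge vanishes for every $g \in K$. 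The map $\eta \mapsto \omega_W \wedge \eta$ is linear on $\Lambda^u V$, so it kills the span of all $\Lambda^u\rho(g)\,\omega_U$.
\item That span is a nonzero $K$-submodule of $\Lambda^u V$. By hypothesis $\Lambda^u V$ is irreducible, since $0 < u < N$, so the span is all of $\Lambda^u V$. Hence $\omega_W \wedge \eta = 0$ for every $\eta \in \Lambda^u V$.
\item Because $w + u \leq N$, we can pick a $u$-dimensional subspace $U'$ with $W \cap U' = 0$ (extend a basis of $W$ and take $u$ of the new vectors). Its Plücker vector $\eta$ satisfies $\omega_W \wedge \eta \neq 0$, which is the required contradiction.
\end{enumerate}

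\emph{Case $w + u > N$.} Here the target is $\dim(W \cap g \cdot U) = w + u - N$, which is equivalent to $W + g \cdot U = V$. Pass to orthogonal complements, as in the proof of \Cref{prop:transversal}. The subspaces $W^\perp$ and $U^\perp$ have dimensions $N - w$ and $N - u$, which sum to less than $N$, and both dimensions lie strictly between $0$ and $N$. The first case therefore gives $g$ with $W^\perp \cap g \cdot U^\perp = 0$. Unitarity gives $g \cdot U^\perp = (g \cdot U)^\perp$, so
\[
  (W + g \cdot U)^\perp = W^\perp \cap (g \cdot U)^\perp = 0,
\]
hence $W + g \cdot U = V$, which is the required dimension count.

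The only substantive step is the Plücker reformulation together with the observation that the $K$-span of a single decomposable vector is a submodule; irreducibility then does all the work. The point needing care is that we need irreducibility of $\Lambda^j V$ for $j = u$ in the first case and for $j = N - u$ in the second. Both values lie in the range $0 < j < N$ covered by the hypothesis, and $\Lambda^w V$ is never required to be irreducible.
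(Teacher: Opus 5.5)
Your proof is correct and follows the paper's argument essentially verbatim. Both use the Plücker reformulation, then irreducibility of $\Lambda^u V$ to force $\omega_W \wedge \Lambda^u V = 0$, then a contradiction from a complementary decomposable vector, and finally handle the case $w+u>N$ by passing to orthogonal complements and using irreducibility of $\Lambda^{N-u}V$.
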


\begin{proof}
Let $W, U \leq V$ be nonzero and proper, put $w = \dim W$ and $u = \dim U$, and let $\omega_W \in \Lambda^w V$ and $\omega_U \in \Lambda^u V$ be Plücker points, so that $\omega_W \wedge g\omega_U \neq 0$ if and only if $W \cap g \cdot U = 0$.

Suppose first that $w + u \leq N$, so that the minimum in question is $0$.
If $W \cap g \cdot U \neq 0$ for every $g$, then $\omega_W \wedge g\omega_U = 0$ for every $g$.
The vectors $g\omega_U$ span a nonzero invariant subspace of $\Lambda^u V$, hence all of it by irreducibility, so $\omega_W \wedge \zeta = 0$ for every $\zeta \in \Lambda^u V$.
This is false, since in a basis $e_1, \ldots, e_N$ adapted to $W$ we may take $\zeta = e_{w+1} \wedge \cdots \wedge e_{w+u}$, which is legitimate because $w + u \leq N$.
So some translate meets $W$ trivially.

If instead $w + u > N$, apply the previous paragraph to the pair $W^{\perp}$ and $U^{\perp}$, whose dimensions sum to $2N - w - u < N$, using that $\Lambda^{N-u}V$ is irreducible.
It produces $g$ with $W^{\perp} \cap g \cdot U^{\perp} = 0$, and unitarity turns this into $W + g \cdot U = V$, that is $\dim(W \cap g \cdot U) = w + u - N$.
\end{proof}

Here is a concrete case where the criterion applies.

\begin{corollary}\label{cor:sn-std}
Let $n \geq 2$ and let the symmetric group $S_n$ act on the standard representation $V = \C^{n-1}$. Then
\[
  \diamp[S_n](V, U) = \Bigl\lceil \frac{n-1}{\dim U} \Bigr\rceil
  \quad \text{for every nonzero } U \leq V,
  \quad \text{and hence} \quad
  \cc(V) = 1.
\]
\end{corollary}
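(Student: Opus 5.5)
The plan is to apply \Cref{lem:wedge-transversal} and then \Cref{prop:transversal}. The action of $S_n$ on $V$ is unitary for the standard inner product restricted from $\C^n$, so it suffices to check that $\Lambda^j V$ is irreducible for every $0 < j < n-1$. Given transversality, \Cref{prop:transversal} yields $\rr(V,U) = 1$ for every nonzero $U$, which is exactly the displayed equality $\diamp[S_n](V,U) = \lceil (n-1)/\dim U \rceil$, and hence $\cc(V) = 1$.

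The key input is the classical fact that the exterior powers $\Lambda^j V$ of the standard representation are irreducible, namely the Specht modules for the hook partitions $(n-j, 1^j)$, for $0 \le j \le n-1$. I would cite this (Fulton--Harris, Proposition 3.12). For a self-contained argument, I would first note that $\Lambda^j \C^n = \Lambda^j V \oplus \Lambda^{j-1} V$, since $\C^n = V \oplus \mathbf{1}$. Then I would compute $\langle \chi_{\Lambda^j \C^n}, \chi_{\Lambda^j \C^n}\rangle = 2$ for $0<j<n$, which shows that this module has exactly two non-isomorphic irreducible constituents. The inner product can be computed via Mackey theory: $\Lambda^j \C^n$ is induced from the character $\mathrm{sgn} \otimes 1$ of $S_j \times S_{n-j}$, and the self-intertwiners are indexed by double cosets, that is, by intersection sizes $|A\cap B|$ for $j$-subsets $A,B$. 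Only the cases $|A \cap B| = j$ and $|A \cap B| = j-1$ contribute, because for smaller intersection some transposition lies in the stabilizer of both and the sign characters disagree.

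Since $\Lambda^j V$ and $\Lambda^{j-1} V$ are both nonzero for $1 \le j \le n-1$, the two constituents must be exactly these, and each is irreducible. The small case $n = 2$ is trivial: $V$ is one-dimensional and there are no intermediate $j$.

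I expect the main obstacle to be the irreducibility of the exterior powers, in particular the double-coset count. Citing the standard result avoids it. The remaining steps are direct invocations of earlier results in the paper.
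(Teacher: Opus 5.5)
Your proposal is correct and matches the paper's proof: the irreducibility of the hook modules $\Lambda^j V$, which the paper also cites from Fulton--Harris, gives transversality via \Cref{lem:wedge-transversal}, and \Cref{prop:transversal} then yields $\rr(V,U)=1$ for every nonzero $U$. Your optional self-contained check is also sound: you split $\Lambda^j\C^n = \Lambda^j V \oplus \Lambda^{j-1}V$ and get $\langle \chi, \chi\rangle = 2$ from Mackey's formula, because only the double cosets with $|A \cap B| \in \{j-1, j\}$ contribute.
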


\begin{proof}
For every $0 \leq j \leq n-1$ the exterior power $\Lambda^j V$ is the irreducible representation of $S_n$ labelled by the hook partition $(n-j, 1^j)$ \cite[Exercise 4.6]{FH91}.
In particular every $\Lambda^j V$ with $0 < j < n-1$ is irreducible, so $V$ is transversal by \Cref{lem:wedge-transversal} and $\cc(V) = 1$ by \Cref{prop:transversal}.
\end{proof}

\subsection{Irreducible representations of \texorpdfstring{$\SL_2(\C)$}{SL(2,C)}}\label{sec:sl2}

Transversality also holds for the irreducible representations of $\SL_2(\C)$.
This gives a second and more conceptual proof of the optimal diameters from \cite[Theorem 3.1]{JS25}.

\begin{theorem}\label{thm:sl2-transversal}
For every $k \geq 1$ the irreducible representation $V_k = \Sym^k \C^2$ of $\SL_2(\C)$ is transversal, and consequently $\cc(V_k) = 1$.
\end{theorem}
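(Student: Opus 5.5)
The plan is to prove transversality by a degeneration argument on Grassmannians, using the Borel fixed point theorem, and then conclude $\cc(V_k) = 1$ from \Cref{prop:transversal}. We cannot use \Cref{lem:wedge-transversal}, because the exterior powers $\Lambda^j V_k$ are not irreducible for $\SL_2(\C)$.

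Identify $V_k$ with binary forms of degree $k$, with monomial basis $m_i = x^{k-i}y^{i}$ for $0 \leq i \leq k$, so $N = k+1$. The diagonal torus acts on the $m_i$ with pairwise distinct weights. The upper unipotent subgroup, acting through the raising operator $e = x\,\partial_y$, sends $m_i$ to a nonzero multiple of $m_{i-1}$ and kills $m_0$.

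Fix $w, u$ and subspaces $W, U$, and put $r = \max(0, w+u-N)$. Consider the orbit closures $Z_W = \overline{G \cdot W} \subseteq \mathrm{Gr}(w, V_k)$ and $Z_U = \overline{G\cdot U} \subseteq \mathrm{Gr}(u,V_k)$. These are projective, so the Borel fixed point theorem gives a point of $Z_W$ fixed by the upper Borel subgroup $B$.

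\textbf{Key step: classify the $B$-fixed points.} A $B$-fixed subspace is torus-stable. Since the torus weights are distinct, it is spanned by monomials. It is also $e$-stable, which forces its index set to be downward closed. So the only $B$-fixed point of $\mathrm{Gr}(w,V_k)$ is the initial segment $W_0 = \langle m_0, \ldots, m_{w-1}\rangle$, and hence $W_0 \in Z_W$. Applying the Weyl element $\left(\begin{smallmatrix}0&1\\-1&0\end{smallmatrix}\right)$, which reverses the monomials, to the $B$-fixed point of $Z_U$ gives the terminal segment $U_0 = \langle m_{k-u+1}, \ldots, m_k\rangle \in Z_U$. By inspection, $\dim(W_0 \cap U_0) = r$.

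\textbf{Semicontinuity.} The condition $\dim(W' \cap U') \leq r$ is Zariski open on $\mathrm{Gr}(w,V_k)\times \mathrm{Gr}(u,V_k)$, being a rank condition as in \Cref{cor:reductive}. It holds at $(W_0,U_0) \in Z_W \times Z_U$. The $G\times G$-orbit of $(W,U)$ is dense in the irreducible variety $Z_W \times Z_U$, so this open set contains some $(hW, h'U)$. Then $g = h^{-1}h'$ satisfies $\dim(W \cap g\cdot U) = \dim(hW \cap h'U) \leq r$. The reverse inequality always holds, so $V_k$ is transversal over $\SL_2(\C)$.

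\textbf{Passing to the compact group.} \Cref{prop:transversal} is stated for a compact group, so we pass to $K = \SU_2$. The set of good $g$ above is a nonempty Zariski open subset of $G$, and $K$ is Zariski dense, so $K$ meets it and $V_k$ is transversal for $K$. Then \Cref{prop:transversal} gives $\cc(V_k) = 1$, and \Cref{lem:compact-algebraic} transfers this back to $\SL_2(\C)$.

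The main obstacle is the key step: the existence of a $B$-fixed point in each orbit closure, and its identification with a segment of monomials. This is exactly where the structure of $\SL_2$ enters, through a single raising operator that moves along the distinct weight lines, and it is the point I would check most carefully.
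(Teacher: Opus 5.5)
Your proof is correct, but it differs from the paper's at the decisive step.

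The paper degenerates only $U$. If no translate of $U$ met $W$ trivially, the orbit closure of $[U]$ would lie in the closed locus of subspaces meeting $W$, and so would the orbit of the unique $B$-stable subspace $F_u = X^{k-u+1}\C[X,Y]_{u-1}$. The paper then rules this out for an arbitrary $W$ by a Wronskian argument. The translates of $F_u$ are the forms vanishing to order $k-u+1 \geq w$ at a point of $\mathbf{P}^1$. At a point off the zeros of the Wronskian of a basis of $W$, no nonzero element of $W$ vanishes to order $w$. The case $w+u>N$ is treated separately, by passing to annihilators and using $V_k^* \cong V_k$.

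You instead let $G\times G$ act on $\overline{GW}\times\overline{GU}$, in which $GW\times GU$ is dense, and degenerate both subspaces at once. Everything then reduces to one explicit pair, an initial and a terminal segment of monomials, whose intersection visibly has the expected dimension. This buys three things:
\begin{enumerate}
\item it needs neither the Wronskian nor the duality step;
\item it handles both ranges of $w+u$ uniformly;
\item it yields a general sufficient criterion: a representation of a connected reductive group is transversal whenever every $B$-stable subspace meets every $B^-$-stable subspace in the expected dimension. For $\SL_2(\C)$ this is immediate from the uniqueness of the $B$-stable subspace in each dimension.
\end{enumerate}

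In exchange, the paper's route gives explicit good translates relative to a given $W$: $L^{k-u+1}\C[X,Y]_{u-1}$ for any linear form $L$ whose zero avoids the zeros of that Wronskian.
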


\begin{proof}
We work with the concrete model $V_k = \C[X,Y]_k$ of homogeneous polynomials of degree $k$ in two variables, on which $\SL_2(\C)$ acts by linear change of variables.

Put $N = k+1 = \dim V_k$ and let $W, U \leq V_k$ with $w = \dim W$ and $u = \dim U$.
If either subspace is $0$ or all of $V_k$ the required value is forced for every $g$, so assume both are nonzero and proper.

Assume first that $w + u \leq N$, so that the transversal value is $0$.
The locus
\[
  \Sigma_W = \bigl\{[U'] \in \mathrm{Gr}(u, V_k) \bigm| U' \cap W \neq 0 \bigr\}
\]
is closed, so if no translate of $U$ met $W$ trivially, then the whole orbit closure $\overline{\SL_2(\C) \cdot [U]}$ would lie in $\Sigma_W$.
That closure is stable, closed and projective, so the Borel fixed point theorem \cite[Theorem III.10.4]{Bor91} supplies a point of it fixed by the upper triangular Borel subgroup, whose own orbit again lies in $\Sigma_W$.
It is therefore enough to rule out the Borel-stable subspaces, and this is where $\SL_2(\C)$ is special, since there is exactly one of each dimension.
Indeed such a subspace is stable under the diagonal torus, hence spanned by monomials $X^{k-i}Y^i$, and stable under $X\partial_Y$, which lowers $i$, so the one of dimension $u$ is
\[
  F_u = X^{\,k-u+1}\C[X,Y]_{u-1} = \bigl\{ f \in V_k \bigm| X^{\,k-u+1} \text{ divides } f \bigr\}.
\]
Its translates are the subspaces $L^{\,k-u+1}\C[X,Y]_{u-1}$ with $L$ a nonzero linear form, because $\SL_2(\C)$ is transitive on those up to scalars, and they consist of the forms vanishing to order at least $k-u+1$ at the point of $\mathbf{P}^1$ cut out by $L$.

Since $w \leq N - u = k-u+1$, it suffices to produce a point at which no nonzero element of $W$ vanishes to order $w$.
Fix a basis $f_1, \ldots, f_w$ of $W$ and pass to the affine chart, writing $\phi_i(t) = f_i(1,t)$.
This is an isomorphism between $V_k$ and the space of polynomials in $t$ of degree at most $k$, under which vanishing to order $w$ at $[1:t_0]$ becomes divisibility by $(t-t_0)^w$.
The elements of $W$ vanishing to order at least $w$ at $[1:t_0]$ form the kernel of
\[
  W \longrightarrow \C^{w},
  \qquad
  f \longmapsto \bigl(\phi(t_0), \phi'(t_0), \ldots, \phi^{(w-1)}(t_0)\bigr),
\]
whose determinant in the chosen basis is the Wronskian $\mathrm{Wr}(\phi_1, \ldots, \phi_w)(t_0)$.
The $\phi_i$ are linearly independent polynomials, so their Wronskian does not vanish identically, and any $t_0$ off its zero set gives $W \cap L^{\,k-u+1}\C[X,Y]_{u-1} = 0$.

If instead $w + u > N$, pass to annihilators in $V_k^*$.
We have $W + g \cdot U = V_k$ if and only if $W^\perp \cap (g \cdot U)^\perp = 0$, the annihilator $(g\cdot U)^\perp$ is the translate of $U^\perp$ under the dual action, and $V_k^* \cong V_k$ as a module.
As $\dim W^\perp + \dim U^\perp = 2N - w - u < N$, the case already settled applies to the pair $(W^\perp, U^\perp)$ and returns $g$ with $W + g \cdot U = V_k$, that is $\dim(W \cap g \cdot U) = w+u-N$.

In both cases the admissible $g$ form a nonempty Zariski open subset of $\SL_2(\C)$, since the dimension in question is the rank of a matrix depending regularly on $g$, so the Zariski dense subgroup $\SU(2)$ contains such a $g$ as well.
Hence $V_k$ is transversal over $\SL_2(\C)$ and over $\SU(2)$, and \Cref{prop:transversal} gives $\cc(V_k) = 1$.
\end{proof}

\begin{remark}
The representations $V_k$ of $\SL_2(\C)$ are transversal by \Cref{thm:sl2-transversal} while $\Lambda^d V_k$ is far from irreducible, so \Cref{lem:wedge-transversal} is not a necessary condition for transversality.
\end{remark}

\section{Large complexity from structured subspaces}\label{sec:degree}

Our aim in this section is to show that $\ln \dim V$ is the right order.
We construct representations $V$ with particularly structured subspaces $U$ for which $\rr(V, U)$ is of order $\ln \dim V$.

\subsection{Principal subspaces in a Cartan section ring}
Let $G$ be a connected reductive group over $\C$, fix a Borel subgroup and a maximal torus, let $\lambda$ be a nonzero dominant weight and let $P_\lambda$ be the stabilizer of the highest weight line in $\mathbf{P}(V(\lambda))$.
Put
\[
  X_\lambda = G/P_\lambda \subseteq \mathbf{P}(V(\lambda)),
  \qquad
  D = \dim X_\lambda,
\]
so that $X_\lambda$ is the closed orbit and carries the ample homogeneous line bundle $\mathcal{L}_\lambda$ induced by the embedding. A general reference for the following is \cite{Bri05}.

\begin{lemma}\label{lem:cartan-ring}
There are $G$-equivariant identifications $H^0(X_\lambda, \mathcal{L}_\lambda^{\otimes m}) \cong V(m\lambda)^*$ for $m \geq 0$, under which multiplication of sections is the Cartan product
\[
  V(a\lambda)^* \otimes V(b\lambda)^* \longrightarrow V\bigl((a+b)\lambda\bigr)^*,
\]
and this map is surjective.
Consequently
\[
  R(\lambda) = \bigoplus_{m \geq 0} V(m\lambda)^*
\]
is a graded integral domain generated in degree one, of Krull dimension $D+1$, whose Hilbert function $h(m) = \dim V(m\lambda)$ agrees for large $m$ with a polynomial of degree $D$ and positive leading coefficient.
\end{lemma}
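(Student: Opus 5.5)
The plan is to obtain every part of \Cref{lem:cartan-ring} from Borel--Weil together with standard facts on projectively normal embeddings; see \cite{Bri05} for background.

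\textbf{Step 1: identify the section spaces.} The line bundle $\mathcal{L}_\lambda^{\otimes m}$ is the homogeneous bundle on $G/P_\lambda$ attached to the character $m\lambda$ of $P_\lambda$. It is $\mathcal{O}(m)$ restricted along the embedding $X_\lambda \subseteq \mathbf{P}(V(\lambda))$. By Borel--Weil, $H^0(G/P_\lambda, \mathcal{L}_{m\lambda})$ is an irreducible $G$-module isomorphic to $V(m\lambda)^*$. Concretely, a section corresponds to a function $\phi$ on $G$ with $\phi(gp) = (m\lambda)(p)^{-1}\phi(g)$. The dual module embeds via matrix coefficients $\xi \mapsto (g \mapsto \xi(g v_{m\lambda}))$, where $v_{m\lambda}$ is a highest weight vector. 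The sign and dual conventions will be fixed so that the embedding line bundle corresponds to $\lambda$.

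\textbf{Step 2: identify multiplication.} In the matrix-coefficient model, multiplication of sections becomes
\[
\xi \otimes \eta \mapsto \bigl(g \mapsto \xi(g v_{a\lambda})\,\eta(g v_{b\lambda})\bigr).
\]
This equals the coefficient of $\xi\otimes\eta$ at $g(v_{a\lambda}\otimes v_{b\lambda})$. Since $v_{a\lambda}\otimes v_{b\lambda}$ is a highest weight vector of weight $(a+b)\lambda$, it generates the Cartan component $V((a+b)\lambda) \subseteq V(a\lambda)\otimes V(b\lambda)$. The product map is therefore the dual of this inclusion, namely the Cartan projection. That projection is a nonzero $G$-map onto an irreducible module, hence surjective. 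This also shows $R(\lambda)$ is generated in degree one: iterating gives surjections $V(\lambda)^{*\otimes m} \to V(m\lambda)^*$, equivalently $\Sym^m V(\lambda)^* \to H^0(X_\lambda, \mathcal{O}(m))$. So $R(\lambda)$ is the homogeneous coordinate ring of $X_\lambda$, a projectively normal embedding.

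\textbf{Step 3: integrality and dimension.} $R(\lambda)$ is a subring of the section ring $\bigoplus_m H^0(X_\lambda, \mathcal{L}^m)$, which embeds in the function field of the irreducible variety $X_\lambda$ via a trivialization over an open cell. It is therefore a domain; alternatively, it is a quotient of the polynomial ring by the prime ideal of the irreducible cone over $X_\lambda$. As the homogeneous coordinate ring of an irreducible projective variety of dimension $D$, it has Krull dimension $D+1$. By Hilbert's theorem, its Hilbert function $m \mapsto \dim V(m\lambda)^* = \dim V(m\lambda)$ agrees for large $m$ with a polynomial of degree $D$ whose leading coefficient is $\deg X_\lambda / D! > 0$. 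As a cross-check, Weyl's dimension formula
\[
\dim V(m\lambda) = \prod_{\alpha>0} \frac{\langle m\lambda+\rho,\alpha^\vee\rangle}{\langle \rho,\alpha^\vee\rangle}
\]
is a polynomial in $m$ whose degree is the number of positive roots not orthogonal to $\lambda$, which equals $\dim G/P_\lambda$.

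\textbf{Main obstacle.} The only delicate point is Step 2: matching conventions, dual versus non-dual and $\lambda$ versus $-w_0\lambda$, so that multiplication is genuinely the Cartan product with the stated indexing. The rest is standard.
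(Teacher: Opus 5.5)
Your proof is correct, but it runs the key step in the opposite order from the paper.

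\textbf{The paper's route.} The paper never computes the multiplication map. It first shows that the section ring is a domain: nonzero sections are nonzero functions on a common trivializing dense open set of the irreducible $X_\lambda$. From this it deduces that $V(a\lambda)^*\otimes V(b\lambda)^*\to V((a+b)\lambda)^*$ is nonzero, and gets surjectivity from equivariance and irreducibility of the target. That this map is the Cartan product is left implicit, through the multiplicity one of $V((a+b)\lambda)$ in $V(a\lambda)\otimes V(b\lambda)$. The Krull dimension and the Hilbert polynomial are quoted for section rings of ample bundles.

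\textbf{Your route.} You compute the product in the matrix-coefficient model as restriction to the Cartan component generated by $v_{a\lambda}\otimes v_{b\lambda}$. Surjectivity is then just the dual of an inclusion and does not depend on integrality. You get integrality afterwards from $R(\lambda)\cong\Sym V(\lambda)^*/I(X_\lambda)$ with $I(X_\lambda)$ prime.

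\textbf{What each buys.} Your approach gives an explicit verification of the clause ``multiplication is the Cartan product'' and of projective normality. Your Weyl-formula check is in fact a complete proof of the Hilbert-function claim, and it holds for all $m$, not only large $m$. The cost is convention bookkeeping. As written, $\phi(g)=\xi(gv_{m\lambda})$ satisfies $\phi(gp)=(m\lambda)(p)\,\phi(g)$, without the inverse; this is exactly the point you flagged. The paper's argument is shorter and does not depend on such conventions.
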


\begin{proof}
The identification of the spaces of sections is the Borel--Weil theorem.
Since $X_\lambda$ is irreducible, the section ring is a domain, because on a common trivializing dense open set nonzero sections are nonzero regular functions and their product cannot vanish identically.
In particular the multiplication map is nonzero, and being $G$-equivariant with irreducible target $V((a+b)\lambda)^*$, it is surjective.
Taking $a = 1$ repeatedly gives generation in degree one.

Finally the section ring of an ample line bundle on a projective variety of dimension $D$ has Krull dimension $D+1$, and its Hilbert polynomial has degree $D$ and positive leading coefficient.
\end{proof}

For the rest of the subsection write $R = R(\lambda)$ and $R_m = V(m\lambda)^*$.
Fix $k \geq 1$, put
\[
  V_k = V(k\lambda) = R_k^*,
\]
choose a nonzero $\ell \in R_1$, and define
\begin{equation}\label{eq:principal}
  U_{\ell,k} = (\ell R_{k-1})^{\perp} \leq R_k^* = V_k,
\end{equation}
the annihilator being taken for the natural perfect pairing between $R_k^*$ and $R_k$.
Two elementary facts are used repeatedly.
Equivariance of the pairing gives
\begin{equation}\label{eq:equivariance}
  g U_{\ell,k} = U_{g\ell,k},
  \qquad
  \bigl(g U_{\ell,k}\bigr)^{\perp} = (g\ell)R_{k-1},
\end{equation}
so that the translates of $U_{\ell,k}$ are exactly the $U_{g\ell,k}$, and for subspaces $A_i \leq V_k$ we have
\begin{equation}\label{eq:annsum}
  \Bigl(\sum_i A_i\Bigr)^{\perp} = \bigcap_i A_i^{\perp}.
\end{equation}

\begin{theorem}\label{thm:cartan}
Assume $D = \dim G/P_\lambda \geq 1$. Then $U_{\ell,k} \neq 0$,
\[
  \dim U_{\ell,k} = \dim R_k - \dim R_{k-1},
  \qquad
  \diamp[G](V_k, U_{\ell,k}) \geq k+1,
\]
and moreover
\[
  \liminf_{k \to \infty} \rr(V_k, U_{\ell,k}) \geq D.
\]
\end{theorem}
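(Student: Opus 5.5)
Since $R$ is a domain, multiplication by the nonzero element $\ell$ is injective, so $\dim \ell R_{k-1} = \dim R_{k-1}$. Taking annihilators then gives $\dim U_{\ell,k} = \dim R_k - \dim R_{k-1}$. This difference is positive. If it were zero, multiplication by $\ell$ would be an isomorphism $R_{k-1} \to R_k$. Since $R$ is generated in degree one, we would then get $R_k = \ell R_{k-1}$. Inducting on $k$ gives $\dim R_m = 1$ for all $m \geq k-1$, which contradicts the Hilbert polynomial having degree $D \geq 1$ by \Cref{lem:cartan-ring}. So $U_{\ell,k} \neq 0$.

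For the lower bound $\diamp[G](V_k, U_{\ell,k}) \geq k+1$ I would argue through annihilators. By \eqref{eq:equivariance} and \eqref{eq:annsum}, the sum of translates $g_1 U_{\ell,k} + \dots + g_m U_{\ell,k}$ equals $V_k$ if and only if $\bigcap_i \ell_i R_{k-1} = 0$, where $\ell_i = g_i \ell$. So it suffices to show that for any $m \leq k$ nonzero $\ell_1, \dots, \ell_m \in R_1$ the product $\ell_1 \cdots \ell_m \cdot R_{k-m}$ is nonzero. This product lies in every $\ell_i R_{k-1}$, because $m \leq k$ and the other factors can be absorbed into degree $k-1$. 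It is nonzero because $R$ is a domain and $R_{k-m} \neq 0$. Hence the intersection is nonzero, and at least $k+1$ translates are needed.

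For the asymptotic statement, put $h(m) = \dim R_m$, which by \Cref{lem:cartan-ring} is a polynomial $c m^D + O(m^{D-1})$ for large $m$ with $c > 0$. Then $\dim U_{\ell,k} = h(k) - h(k-1) = cDk^{D-1} + O(k^{D-2})$. This gives $\dim V_k / \dim U_{\ell,k} = k/D + O(1)$, so $\lceil \dim V_k/\dim U_{\ell,k}\rceil = k/D + O(1)$. Dividing $k+1$ by this ratio gives $\rr(V_k, U_{\ell,k}) \geq (k+1)/(k/D + O(1)) \to D$.

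The only care needed is the case $D = 1$, where the difference $h(k)-h(k-1)$ is asymptotically the constant $c$, but the same computation works. The main point to verify is that $h(k)-h(k-1)$ genuinely has leading term $cDk^{D-1}$. This holds because the Hilbert function agrees with a polynomial of degree $D$ for large $k$, so the estimate is routine.
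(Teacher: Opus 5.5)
Your proof is correct and matches the paper's in three places: the dimension count, the obstruction $0 \neq (g_1\ell)\cdots(g_m\ell)\,r \in \bigcap_i (g_i\ell)R_{k-1}$ with $0 \neq r \in R_{k-m}$, and the asymptotic comparison with the trivial bound. In the last of these, the paper makes your $O(1)$ explicit via $\lceil N_k/u_k\rceil \le N_k/u_k + 1$, which gives $\rr \geq (k+1)u_k/(N_k+u_k)$.

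The one place you take a different route is $U_{\ell,k} \neq 0$. The paper reads off $\dim R_k > \dim R_{k-1}$ from the Weyl dimension formula. You argue ring-theoretically, using only that $R$ is a domain generated in degree one whose Hilbert polynomial has degree $D \geq 1$; this is more self-contained, while the paper's step is a one-line citation.

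One detail in your version should be corrected. The equality $R_k = \ell R_{k-1}$ is already the assumed isomorphism. Generation in degree one enters only in the induction $R_{m+1} = R_1 R_m = R_1 \ell R_{m-1} = \ell R_m$. That induction gives $\dim R_m = \dim R_{k-1}$ for all $m \geq k-1$, not $\dim R_m = 1$. Eventual constancy of the Hilbert function is already enough to contradict degree $D \geq 1$, so the argument goes through once you state that instead.
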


\begin{proof}
As $R$ is a domain and $\ell \neq 0$, multiplication by $\ell$ is an injection $R_{k-1} \hookrightarrow R_k$, so $\dim \ell R_{k-1} = \dim R_{k-1}$ and the dimension of $U_{\ell,k}$ is as claimed.
By the Weyl dimension formula $\dim R_k > \dim R_{k-1}$, so $U_{\ell,k}$ is nonzero.

Let $m \leq k$ and let $g_1, \ldots, g_m \in G$.
Take any nonzero $r \in R_{k-m}$, which exists because that graded piece contains the $(k-m)$-th Cartan power of any nonzero element of $R_1$.
The product
\[
  q = (g_1\ell)(g_2\ell)\cdots(g_m\ell)\,r \in R_k
\]
is nonzero because $R$ is a domain, and for each $i$ it lies in the principal subspace $(g_i\ell)R_{k-1}$.
By \eqref{eq:equivariance} and \eqref{eq:annsum},
\[
  0 \neq q \in \bigcap_{i \leq m}(g_i\ell)R_{k-1} = \Bigl(\sum_{i \leq m} g_i U_{\ell,k}\Bigr)^{\perp},
\]
so $m$ translates never span $V_k$ and the diameter is at least $k+1$.

It remains to compare this with the trivial bound.
Write the leading term of the Hilbert polynomial as $ct^D$ with $c > 0$, so that
\[
  N_k = \dim V_k = ck^D + O(k^{D-1}),
\]
and the dimension formula above turns the Hilbert function into its difference,
\[
  u_k = \dim U_{\ell,k} = h(k) - h(k-1) = cDk^{D-1} + O(k^{D-2}).
\]
Using $\lceil N_k/u_k \rceil \leq N_k/u_k + 1$ we get
\[
  \rr(V_k, U_{\ell,k}) \geq \frac{k+1}{\lceil N_k/u_k \rceil} \geq \frac{(k+1)u_k}{N_k + u_k} = D + o(1),
\]
which is the assertion about the lower limit, as desired.
\end{proof}

The statement is a lower limit and not a limit, since the true diameter may well exceed $k+1$.
The hypothesis $D \geq 1$ is needed and not merely convenient.

\begin{remark}\label{rem:zerodim}
If $D = 0$ then $P_\lambda = G$, every $V(k\lambda)$ is one-dimensional, multiplication by $\ell$ is an isomorphism $R_{k-1} \to R_k$, and $U_{\ell,k} = 0$, which \Cref{def:complexity} excludes.
The extreme case is a torus, all of whose irreducible representations are one-dimensional and have covering complexity one.
\end{remark}

Taking $\lambda$ regular makes $P_\lambda$ a Borel subgroup and gives the largest value of $D$, so the invariant $\cc$ of a fixed group is bounded below by the dimension of its flag variety.

\begin{corollary}\label{cor:flagbound}
Let $G$ be connected reductive with $\dim G/B > 0$ and let $\lambda$ be a regular dominant weight. Then
\[
  \liminf_{k \to \infty} \cc\bigl(V(k\lambda)\bigr) \geq \dim G/B = |\Phi^+|.
\]
\end{corollary}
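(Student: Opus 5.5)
The plan is to apply \Cref{thm:cartan} with $\lambda$ regular and then identify $D$ with $\dim G/B$. First, since $\cc(V) \geq \rr(V, U)$ for every nonzero $U \leq V$ by \Cref{def:complexity}, we have
\[
  \cc\bigl(V(k\lambda)\bigr) \geq \rr\bigl(V(k\lambda), U_{\ell,k}\bigr)
\]
for each $k \geq 1$ and any fixed nonzero $\ell \in R_1$. Here $U_{\ell,k}$ is the principal subspace of \eqref{eq:principal}, which is nonzero as soon as $D \geq 1$. Taking lower limits and invoking the last assertion of \Cref{thm:cartan} gives
\[
  \liminf_{k\to\infty} \cc\bigl(V(k\lambda)\bigr) \geq D = \dim G/P_\lambda .
\]

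It remains to check that $P_\lambda = B$ when $\lambda$ is regular dominant, so that $D = \dim G/B$. The stabilizer of the highest weight line is the standard parabolic $P_I$ containing $B$. Its Levi part is generated by $T$ and the root subgroups $U_{-\alpha}$ for simple roots $\alpha$ with $\langle \lambda, \alpha^\vee \rangle = 0$. Indeed, $U_{-\alpha}$ moves the highest weight vector $v_\lambda$ off its line exactly when $f_\alpha v_\lambda \neq 0$, and by $\mathfrak{sl}_2$-theory this happens precisely when $\langle\lambda,\alpha^\vee\rangle > 0$. Regularity means all these pairings are positive, so $I = \emptyset$ and $P_\lambda = B$.

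Then $\dim G/B = \dim \mathfrak{g}/\mathfrak{b} = |\Phi^+|$, since $\mathfrak{g}/\mathfrak{b}$ is spanned by the negative root spaces. The hypothesis $\dim G/B > 0$ guarantees $D \geq 1$, which is the standing assumption of \Cref{thm:cartan}.

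The main obstacle is keeping track of conventions rather than any real difficulty. The paper allows $G$ reductive, possibly with a central torus, and the Borel subgroup is fixed, so regular dominant $\lambda$ should be read as regular with respect to the simple roots. If $\lambda$ had a nonzero central component, $\lambda$ still pairs positively with every simple coroot, and the central torus acts on $v_\lambda$ by a scalar and lies in $B$ anyway. So the identification $P_\lambda = B$ is unaffected, and the corollary follows.
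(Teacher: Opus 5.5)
Your proposal is correct and follows the paper's proof: combine $\cc(V(k\lambda)) \geq \rr(V(k\lambda), U_{\ell,k})$ with the lower-limit assertion of \Cref{thm:cartan}, using $P_\lambda = B$ for regular $\lambda$ so that $D = \dim G/B = |\Phi^+|$. The only difference is that you spell out, via the $\mathfrak{sl}_2$ argument on $f_\alpha v_\lambda$, why $P_\lambda = B$. The paper simply takes this standard fact for granted.
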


\begin{proof}
For a regular dominant weight $P_\lambda = B$, so \Cref{thm:cartan} applies with $D = \dim G/B$, and $\cc(V(k\lambda)) \geq \rr(V(k\lambda), U_{\ell,k})$.
\end{proof}

For $\SL_2(\C)$ this says $\cc \geq 1$, which is no information, and indeed \Cref{sec:sl2} records that $\cc(V) = 1$ there for every $V$.
The first case with content is $\SL_3(\C)$, where the flag variety is a threefold.

\begin{example}\label{ex:sl3flag}
Let $G = \SL_3(\C)$ and $\lambda = \omega_1 + \omega_2$, so that $G/B$ is the full flag variety of dimension three and $R_1 = V(\lambda)^* \cong \slalg_3(\C)$.
By the Weyl dimension formula $\dim V(k,k) = (k+1)^3$, so for $V_k = V(k,k)$ and a nonzero $\ell \in R_1$ the subspace $U_{\ell,k}$ of \Cref{thm:cartan} has
\[
  N_k = (k+1)^3, \qquad u_k = (k+1)^3 - k^3 = 3k^2 + 3k + 1,
  \qquad \diamp[G](V_k, U_{\ell,k}) \geq k+1,
\]
and therefore
\[
  \rr(V_k, U_{\ell,k}) \geq \frac{k+1}{\bigl\lceil (k+1)^3/(3k^2 + 3k + 1) \bigr\rceil}
  \longrightarrow 3.
\]
\end{example}

\subsection{Symmetric powers}
For $\lambda = \omega_1$ the construction is classical.
There $X_\lambda = \mathbf{P}^{n-1}$, the ring $R$ is a polynomial ring, and the subspaces $U_{\ell,k}$ are the symmetric powers of hyperplanes.
It is worth recording the resulting statement in its own terms, since the combinatorics is transparent and it is what we iterate below.

\begin{proposition}\label{prop:sym}
Let $n \geq 2$, let $E = \C^n$, let $k \geq 0$ and let $X \leq E$ be a hyperplane. Then
\[
  \diamp[\SL_n(\C)]\bigl(\Sym^k E, \Sym^k X\bigr) = k+1,
  \qquad
  \rr = \frac{k+1}{1 + \lceil k/(n-1) \rceil}.
\]
\end{proposition}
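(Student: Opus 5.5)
The plan is to work in the dual picture already set up for \Cref{thm:cartan}, now with $\lambda = \omega_1$, where everything reduces to divisibility in a polynomial ring. Identify $(\Sym^k E)^* \cong \Sym^k E^* = \C[x_1,\ldots,x_n]_k$. If the hyperplane $X$ is cut out by a linear form $\ell \in E^*$, then the annihilator of $\Sym^k X$ is the principal subspace $\ell \cdot \Sym^{k-1}E^*$. One can check this by choosing coordinates with $X = \{x_n = 0\}$: $\Sym^k X$ is spanned by the monomials in $e_1,\ldots,e_{n-1}$, and a form pairs to zero with all of them exactly when every monomial it contains involves $x_n$. By equivariance, as in \eqref{eq:equivariance}, the translates $g \cdot \Sym^k X = \Sym^k(gX)$ are exactly the symmetric powers of all hyperplanes, and their annihilators are $\ell' \Sym^{k-1}E^*$ for $\ell'$ ranging over all nonzero linear forms. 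By \eqref{eq:annsum}, translates $\Sym^k X_1, \ldots, \Sym^k X_m$ span $\Sym^k E$ if and only if no nonzero degree-$k$ form is divisible by every $\ell_i$.

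\textbf{Lower bound.} For $m \leq k$ hyperplanes, the form $\ell_1 \cdots \ell_m \cdot x_1^{k-m}$ is nonzero and divisible by each $\ell_i$. Hence $m$ translates never suffice, and the diameter is at least $k+1$. This is the specialization of \Cref{thm:cartan} to $\lambda = \omega_1$.

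\textbf{Upper bound.} Since $n \geq 2$, we may choose $k+1$ pairwise non-proportional linear forms $\ell_1, \ldots, \ell_{k+1}$, for instance $x_1 + t x_2$ for distinct values of $t$. The corresponding hyperplanes are translates of $X$ because $\SL_n(\C)$ is transitive on hyperplanes. Suppose a form $f$ of degree $k$ is divisible by every $\ell_i$. The $\ell_i$ are pairwise non-associate primes in the unique factorization domain $\C[x_1,\ldots,x_n]$, so their product, of degree $k+1$, divides $f$. This forces $f = 0$, so the $k+1$ translates span $\Sym^k E$. For $k = 0$ the statement is trivial, since $\Sym^0 X = \Sym^0 E$ and the diameter is $1$.

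\textbf{The ratio.} We compute
\[
  \frac{\dim \Sym^k E}{\dim \Sym^k X} = \binom{n-1+k}{k} \Big/ \binom{n-2+k}{k} = \frac{n-1+k}{n-1} = 1 + \frac{k}{n-1}.
\]
The trivial bound is therefore $1 + \lceil k/(n-1) \rceil$, which gives the stated value of $\rr$.

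The only step needing care is the identification of the annihilator $(\Sym^k X)^\perp$ with $\ell\,\Sym^{k-1}E^*$; the rest is unique factorization and a binomial computation.
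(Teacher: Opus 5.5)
Your proof is correct and follows the paper's argument: you dualize to $\C[x_1,\ldots,x_n]_k$, identify $(\Sym^k X)^\perp$ with the multiples of the defining linear form, reduce spanning to divisibility by pairwise non-proportional linear forms via unique factorization, and compute $N/u = (n+k-1)/(n-1)$. The differences are cosmetic: the paper obtains the annihilator by polarization, as the degree-$k$ part of the ideal of $X$, rather than from the monomial basis, and your explicit witness $\ell_1\cdots\ell_m x_1^{k-m}$ handles repeated hyperplanes directly rather than first passing to the distinct ones.
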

\begin{proof}
Let $V = \Sym^k E$, and identify $V^*$ with the space $\C[x_1, \ldots, x_n]_k$ of forms of degree $k$ on $E$.
Polarization writes any product of $k$ vectors as a combination of $k$-th powers, and the pairing satisfies $\langle f, w^k \rangle = k!\,f(w)$, so for a subspace $X \leq E$ we get
\[
  (\Sym^k X)^{\perp} = \{f \mid f|_X = 0\} = I(X)_k,
\]
the degree $k$ part of the homogeneous ideal of $X$.
Since $\bigcap_i I(X_i) = I(\bigcup_i X_i)$, a sum $\sum_{i \leq m}\Sym^k X_i$ is all of $\Sym^k E$ if and only if no nonzero form of degree $k$ vanishes on $X_1 \cup \cdots \cup X_m$.

Now take $X$ to be a hyperplane $\ker \lambda$.
We have $I(X)_k = \lambda \cdot \C[x]_{k-1}$, and for pairwise non-proportional $\lambda_1, \ldots, \lambda_m$ the intersection of the ideals is $(\lambda_1 \cdots \lambda_m)$, whose degree $k$ part is nonzero exactly when $m \leq k$.
So at least $k+1$ translates are needed, and $k+1$ pairwise non-proportional forms do the job, all of the corresponding hyperplanes lying in the single $\SL_n$-orbit of $X$.
This proves the diameter formula.
The formula for $\rr$ follows because $N/u = \binom{n+k-1}{k}/\binom{n+k-2}{k} = (n+k-1)/(n-1)$.
\end{proof}

The ratio in \Cref{prop:sym} equals its largest value $(k+1)/2$ for every $n \geq k+1$, and among these the balanced member $n = k+1$ is the one of smallest dimension.

\subsection{Exterior powers}
Another family, of a different shape, reaches the same order.

\begin{lemma}\label{lem:wedge-lower}
Let $E = \C^n$, let $1 \leq j \leq n-1$, and let $W_1, \ldots, W_m \leq E$ be hyperplanes with $m \leq j$. Then
\[
  \sum_{i \leq m} \Lambda^j W_i \neq \Lambda^j E.
\]
\end{lemma}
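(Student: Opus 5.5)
Following the pattern of \Cref{prop:sym}, I will pass to annihilators and exhibit a nonzero functional on $\Lambda^j E$ that kills every $\Lambda^j W_i$. Identify $(\Lambda^j E)^*$ with $\Lambda^j E^*$ via the determinant pairing $\langle \phi_1 \wedge \cdots \wedge \phi_j, v_1 \wedge \cdots \wedge v_j \rangle = \det(\phi_a(v_b))$. Write $W_i = \ker \lambda_i$ with $\lambda_i \in E^*$ nonzero. By \eqref{eq:annsum} it suffices to find a nonzero $\eta \in \Lambda^j E^*$ lying in every annihilator $(\Lambda^j W_i)^\perp$.

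The first step is to identify the annihilator. I claim that $(\Lambda^j W)^\perp = \lambda \wedge \Lambda^{j-1} E^*$ for $W = \ker \lambda$. To check this, choose a basis $e_1, \ldots, e_n$ of $E$ with $W = \langle e_2, \ldots, e_n \rangle$, and take the dual basis with $\lambda = e_1^*$. The space $\Lambda^j W$ is spanned by the wedges $e_S$ with $1 \notin S$. Its annihilator is therefore spanned by the dual wedges $e_S^*$ with $1 \in S$, and these span exactly $e_1^* \wedge \Lambda^{j-1} E^*$. This is the exterior analogue of $I(X)_k = \lambda \cdot \C[x]_{k-1}$.

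The second step is to produce a common element. The exterior algebra lacks the domain property, so I will work with an explicit element instead. Let $L = \langle \lambda_1, \ldots, \lambda_m \rangle \leq E^*$ and $r = \dim L \leq m \leq j$. Choose a basis $\mu_1, \ldots, \mu_r$ of $L$ from among the $\lambda_i$, and extend it to linearly independent $\mu_1, \ldots, \mu_j$ in $E^*$. This extension is possible because $j \leq n$. Put
\[
\eta = \mu_1 \wedge \cdots \wedge \mu_j .
\]
This element is nonzero since the $\mu$'s are independent. Each $\lambda_i$ lies in $L$, so $\lambda_i \wedge \mu_1 \wedge \cdots \wedge \mu_r = 0$. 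The decomposable form $\eta$ is divisible by any vector in its span, so $\eta \in \lambda_i \wedge \Lambda^{j-1} E^*$ for each $i$.

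Combining the two steps, $\eta$ is a nonzero element of $\bigcap_i (\Lambda^j W_i)^\perp = (\sum_i \Lambda^j W_i)^\perp$, and hence the sum is proper. The only real obstacle is the divisibility claim in the second step: for a decomposable $\eta$ with span $M \ni \lambda$, one can change basis of $M$ so that $\lambda$ is the first vector, and then $\eta$ is a nonzero multiple of $\lambda \wedge (\cdots)$. Everything else is bookkeeping with dual bases.
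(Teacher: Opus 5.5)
Your proof is correct and is essentially the paper's argument. The paper also identifies $(\Lambda^j W_i)^\perp = \varphi_i \wedge \Lambda^{j-1}E^*$, wedges a basis of $\langle \varphi_1, \ldots, \varphi_m \rangle$ into a form $\psi$, and multiplies by a complementary $(j-r)$-form; the result is exactly your $\mu_1 \wedge \cdots \wedge \mu_j$, and your change-of-basis justification of divisibility spells out the step the paper states briefly.
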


\begin{proof}
Write $W_i = \ker \varphi_i$ for nonzero functionals $\varphi_i \in E^*$, so that $(\Lambda^j W_i)^\perp = \varphi_i \wedge \Lambda^{j-1}E^*$.
Let $r$ be the dimension of $\langle \varphi_1, \ldots, \varphi_m \rangle$, so that $r \leq m \leq j$, and put $\psi = \varphi_{i_1} \wedge \cdots \wedge \varphi_{i_r} \neq 0$ for a basis of that span.
Every nonzero $\varphi_i$ lies in that span and can be completed to a basis of it, so $\psi \in \varphi_i \wedge \Lambda^{r-1}E^*$ for each $i$.
Since $r \leq j \leq n-1$, there is a $(j-r)$-form $\eta$ on a complementary subspace with $\psi \wedge \eta \neq 0$, and this form lies in every $\varphi_i \wedge \Lambda^{j-1}E^*$.
Hence
\[
  0 \neq \psi \wedge \eta \in \bigcap_{i \leq m} \bigl(\Lambda^j W_i\bigr)^\perp
  = \Bigl(\sum_{i \leq m} \Lambda^j W_i\Bigr)^\perp,
\]
so the sum is proper.
\end{proof}

\begin{remark}
For any subgroup $K \leq \GL(E)$ the translates of $\Lambda^j W$ are again exterior powers of hyperplanes, so \Cref{lem:wedge-lower} gives $\diamp[K](\Lambda^j E, \Lambda^j W) \geq j+1$ for every such $K$ at once, provided only that $\Lambda^j E$ is irreducible for $K$.
Shrinking the group only shrinks the orbit of $[U]$, so the obstruction survives every restriction.
\end{remark}

For the full special linear group the lower bound is exact.

\begin{proposition}\label{prop:ext}
Let $E = \C^n$, let $1 \leq j \leq n-1$ and let $W \leq E$ be a hyperplane. Then
\[
  \diamp[\SL_n(\C)]\bigl(\Lambda^j E, \Lambda^j W\bigr) = j+1,
  \qquad
  \rr = \frac{j+1}{\lceil n/(n-j) \rceil}.
\]
\end{proposition}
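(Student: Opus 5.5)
The lower bound $\diamp \geq j+1$ is \Cref{lem:wedge-lower}: every $\SL_n(\C)$-translate of $\Lambda^j W$ is $\Lambda^j W'$ for another hyperplane $W'$, so any $j$ translates sum to a proper subspace. The plan is therefore to prove the matching upper bound by exhibiting $j+1$ hyperplanes $W_0, \ldots, W_j$ with $\sum_i \Lambda^j W_i = \Lambda^j E$. Since $\SL_n(\C)$ is transitive on hyperplanes, any choice of hyperplanes is realized by translates. The formula for $\rr$ then follows from $\dim \Lambda^j E = \binom{n}{j}$ and $\dim \Lambda^j W = \binom{n-1}{j}$, whose ratio is $n/(n-j)$.

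For the upper bound I would work dually, as in the proof of \Cref{lem:wedge-lower}. Writing $W_i = \ker \varphi_i$, we have
\[
  \Bigl(\sum_i \Lambda^j W_i\Bigr)^\perp = \bigcap_i \varphi_i \wedge \Lambda^{j-1} E^*,
\]
so it suffices to choose $j+1$ functionals for which this intersection is zero. Each $\varphi_i \wedge \Lambda^{j-1}E^*$ is the kernel of the map $\omega \mapsto \varphi_i \wedge \omega$ on $\Lambda^j E^*$. The first choice I would try is $\varphi_0, \ldots, \varphi_j$ linearly independent (possible since $j+1 \leq n$), extended to a basis $x_1, \ldots, x_n$ of $E^*$ with $\varphi_i = x_{i+1}$. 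In the monomial basis $x_S$ with $|S| = j$, the condition $x_{i+1} \wedge \omega = 0$ says exactly that every monomial appearing in $\omega$ contains the index $i+1$. Requiring this for all $i = 0, \ldots, j$ forces every monomial in $\omega$ to contain the $j+1$ indices $1, \ldots, j+1$, which is impossible for a $j$-subset. Hence $\omega = 0$ and the sum is all of $\Lambda^j E$.

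This argument requires only $j+1 \leq n$, which holds because $j \leq n-1$, and it uses coordinate hyperplanes, which are clearly $\SL_n(\C)$-translates of $W$. The main thing to check carefully is the divisibility claim: that $x_a \wedge \omega = 0$ is equivalent to every monomial of $\omega$ containing $a$. This follows because $x_a \wedge x_S$ for $a \notin S$ are distinct basis vectors of $\Lambda^{j+1}E^*$. I do not expect a real obstacle; the delicate part was the lower bound, which is already supplied by \Cref{lem:wedge-lower}.
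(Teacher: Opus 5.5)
Your proposal is correct and matches the paper's proof essentially step for step. The lower bound comes from \Cref{lem:wedge-lower}, the upper bound from taking $j+1$ independent functionals as coordinate functions so that no degree-$j$ wedge monomial can contain all of them, and the ratio from $\binom{n}{j}/\binom{n-1}{j} = n/(n-j)$.
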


\begin{proof}
The lower bound $\diamp[\SL_n(\C)] \geq j+1$ is \Cref{lem:wedge-lower} applied to the hyperplanes $gW$.
Write $W = \ker \varphi$ and choose a basis $e_1, \ldots, e_n$ of $E$ with $\varphi = x_n$, so that $\Lambda^j W$ is spanned by the monomials $e_I$ with $n \notin I$.
Its annihilator is therefore spanned by the $x_I$ with $n \in I$, that is
\[
  (\Lambda^j W)^{\perp} = \varphi \wedge \Lambda^{j-1}E^*,
  \qquad \dim \Lambda^j W = \binom{n-1}{j}.
\]
For the upper bound, choose independent functionals $\varphi_1, \ldots, \varphi_{j+1}$, which is possible because $j+1 \leq n$, and extend them to a basis $x_1, \ldots, x_n$ of $E^*$ with $\varphi_i = x_i$.
A wedge monomial lies in $x_i \wedge \Lambda^{j-1}E^*$ exactly when it contains the letter $x_i$, and no monomial of degree $j$ can contain all of $x_1, \ldots, x_{j+1}$, so
\[
  \bigcap_{i \leq j+1} \varphi_i \wedge \Lambda^{j-1}E^* = 0
\]
and $j+1$ translates span $\Lambda^j E$.
The formula for $\rr$ follows from $N/u = \binom{n}{j}/\binom{n-1}{j} = n/(n-j)$.
\end{proof}

\subsection{Families with a logarithmic complexity}

\begin{corollary}\label{cor:sharp}
Let $V = \Sym^k \C^{k+1}$ as a representation of $G = \SL_{k+1}(\C)$, or $V = \Lambda^m \C^{2m}$ as a representation of $G = \SL_{2m}(\C)$.
In both cases $V$ has a subspace $U$ with $\dim U = \dim V/2$ and
\[
  \diamp[G](V, U) = \frac{\ln \dim V}{2\ln 2}\bigl(1 + o(1)\bigr),
  \qquad
  \cc(V) \geq \rr(V, U) \geq \frac{\ln \dim V}{4 \ln 2}.
\]
\end{corollary}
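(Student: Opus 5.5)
Both cases come from the hyperplane constructions already established: \Cref{prop:sym} with $n = k+1$, and \Cref{prop:ext} with $n = 2m$, $j = m$. In each case we take $U$ to be the corresponding power of a hyperplane $X \leq \C^n$, read off the exact diameter and the ratio $N/u$ from those propositions, and then convert the diameter into a function of $\dim V$ using central binomial asymptotics.

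For the symmetric case, put $n = k+1$ and $U = \Sym^k X$. Then $N/u = (n+k-1)/(n-1) = 2k/k = 2$, so $\dim U = \dim V/2$ and the trivial bound is $\lceil N/u\rceil = 2$. \Cref{prop:sym} gives $\diamp[G](V,U) = k+1$. Also $\dim V = \binom{2k}{k}$, and Stirling, or just the elementary bounds $4^k/(2k+1) \leq \binom{2k}{k} \leq 4^k$, gives $\ln \dim V = 2k\ln 2 - O(\ln k)$. Hence
\[
k+1 = \frac{\ln\dim V}{2\ln 2}\bigl(1+o(1)\bigr).
\]
For the inequality we use $\binom{2k}{k} \leq 4^k$. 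This gives $k \geq \ln\dim V/(2\ln 2)$, so
\[
\rr(V,U) = \frac{k+1}{2} \geq \frac{\ln \dim V}{4\ln 2}.
\]
This bound is exact, not merely asymptotic.

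For the exterior case, put $n = 2m$, $j = m$ and $U = \Lambda^m W$. Then $N/u = n/(n-j) = 2$, again giving $\dim U = \dim V/2$ and trivial bound $2$. \Cref{prop:ext} gives diameter $m+1$, and $\dim V = \binom{2m}{m} \leq 4^m$. The same computation as before yields both the asymptotic formula and $\rr = (m+1)/2 \geq \ln\dim V/(4\ln 2)$.

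Finally, $\cc(V) \geq \rr(V,U)$ holds by \Cref{def:complexity}. There is no real obstacle here. The only care needed is to use the upper bound $\binom{2k}{k} \leq 4^k$, rather than asymptotics, so that the displayed inequality holds for every $k$ and $m$ and not only in the limit.
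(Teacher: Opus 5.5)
Your proposal is correct and follows the paper's proof essentially verbatim. You apply \Cref{prop:sym} with $n = k+1$ and \Cref{prop:ext} with $n = 2m$, $j = m$ to get $N/u = 2$ and diameters $k+1$ and $m+1$, then use Stirling (or the elementary two-sided bounds) for the asymptotic formula and the crude bound $\binom{2r}{r} \leq 4^r$ for the non-asymptotic inequality on $\rr$.
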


\begin{proof}
In the first case $N = \binom{2k}{k}$ and $u = \binom{2k-1}{k} = N/2$, so the trivial bound is $2$, and \Cref{prop:sym} gives $\diamp[G](V, U) = k+1$ and $\rr = (k+1)/2$.
In the second case $N = \binom{2m}{m}$ and $u = \binom{2m-1}{m} = N/2$, and \Cref{prop:ext} gives $\diamp[G](V, U) = m+1$ and $\rr = (m+1)/2$.
The two estimates now come from different halves of the same binomial coefficient.
Stirling gives $\ln \binom{2r}{r} = 2r\ln 2 - (\ln(\pi r))/2 + o(1)$ as $r \to \infty$, which is the asymptotic formula for the diameter.
The inequality for $\rr$ needs no asymptotics at all, since the crude bound $\binom{2r}{r} < 4^r$ already gives $\ln N/(4\ln 2) < r/2 \leq (r+1)/2 = \rr$.
\end{proof}

\begin{remark}\label{rem:transversality}
The pair $U = W = \Sym^k \C^k$ inside $V = \Sym^k \C^{k+1}$ also shows how far from transversal the intersections can be.
Here $\dim U = \dim W = N/2$, so transversality would predict $\dim(W \cap \rho(g) \cdot U) = 0$, whereas $\Sym^k X \cap \Sym^k X' = \Sym^k(X \cap X')$ gives
\[
  \dim\bigl(W \cap \rho(g) \cdot U\bigr) = \binom{2k-2}{k}
  = \frac{k-1}{2(2k-1)}N \longrightarrow \frac{N}{4}
\]
for every $g$ outside the stabilizer.
The first moment bound of \Cref{thm:first-moment} is $\dim U \dim W/N = N/4$, so it is attained here asymptotically.
The deficiency from transversality is here a constant fraction of $N$ while $N/\dim U = 2$, so no bound on the deficiency in terms of $N/\dim U$ can hold in general.
\end{remark}

\subsection{The obstruction ignores the group}\label{subsec:fin-ext}

The lower bound of \Cref{lem:wedge-lower} never looks at the group, so it survives restriction to any subgroup for which the exterior power stays irreducible.
Restricting to the symmetric groups makes the point sharply, because the same group appeared in \Cref{sec:optimal} with the smallest complexity the invariant allows.

\begin{proposition}\label{prop:sn-ext}
Let $m \geq 2$, let the symmetric group $S_{2m+1}$ act on the standard representation $E = \C^{2m}$, and put $V = \Lambda^m E$.
Then $V$ is irreducible, and for $U = \Lambda^m W$ with $W \leq E$ a hyperplane,
\[
  \dim U = \dim V/2,
  \quad
  \diamp[S_{2m+1}](V, U) \geq m+1,
  \quad
  \rr(V, U) \geq \frac{m+1}{2} > \frac{\ln \dim V}{4 \ln 2},
\]
and consequently $\cc(V) \geq \ln \dim V/(4 \ln 2)$.
\end{proposition}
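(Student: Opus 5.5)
The plan has three ingredients: irreducibility of $V$ as an $S_{2m+1}$-module, the group-free obstruction of \Cref{lem:wedge-lower} for the diameter, and the binomial bookkeeping already used in \Cref{cor:sharp} for the ratio.

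For irreducibility, I would invoke the fact quoted in the proof of \Cref{cor:sn-std}. With $n = 2m+1$, the exterior power $\Lambda^j E$ of the standard representation $E = \C^{n-1}$ is the irreducible $S_n$-module labelled by the hook $(n-j, 1^j)$. Taking $j = m$ shows that $V = \Lambda^m E$ is irreducible, corresponding to the hook $(m+1, 1^m)$. The group is finite, hence compact, and it acts unitarily for an invariant inner product, so \Cref{def:complexity} applies to the pair $(V,U)$.

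For the dimensions, write $N = \dim V = \binom{2m}{m}$. Since $W$ is a hyperplane in $E$, we have $\dim U = \binom{2m-1}{m}$. The symmetry of Pascal's rule, $\binom{2m}{m} = \binom{2m-1}{m} + \binom{2m-1}{m-1}$ with $\binom{2m-1}{m-1} = \binom{2m-1}{m}$, gives $\dim U = N/2$, so the trivial bound $\lceil N/\dim U \rceil$ equals $2$.

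For the diameter, the key observation is that every element of $S_{2m+1}$ acts on $E$ through $\GL(E)$. The translate $g\cdot \Lambda^m W$ is therefore $\Lambda^m(gW)$, again the exterior power of a hyperplane. \Cref{lem:wedge-lower} with $j = m \le 2m-1 = n-1$ then shows that no $m$ such translates sum to $V$. This is exactly the restriction principle recorded in the remark following that lemma, and it gives $\diamp[S_{2m+1}](V,U) \ge m+1$. Dividing by the trivial bound yields $\rr(V,U) \ge (m+1)/2$.

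For the logarithmic comparison, I would use the crude bound $\binom{2m}{m} < 4^m$, as in \Cref{cor:sharp}. It gives $\ln \dim V < 2m\ln 2$, so $\ln\dim V/(4\ln 2) < m/2 < (m+1)/2$. Finally, $\cc(V) \ge \rr(V,U)$ holds by definition, which gives the last claim.

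I expect no real obstacle. The only point needing care is the irreducibility citation, in particular that the hook formula applies to the middle exterior power; the hypothesis $m \ge 2$ plays no role beyond making $V$ nontrivial.
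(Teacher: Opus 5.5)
Your proposal is correct and follows the paper's proof step for step. It uses irreducibility via the hook $(m+1,1^m)$, the identity $\binom{2m-1}{m} = \binom{2m}{m}/2$ giving trivial bound $2$, the observation $gU = \Lambda^m(gW)$ fed into \Cref{lem:wedge-lower} with $j = m$, and the crude bound $\binom{2m}{m} \le 4^m$ for the logarithmic comparison. The only blemish is notational: the hypothesis of \Cref{lem:wedge-lower} is $j \le \dim E - 1 = 2m-1$, which you wrote as $n-1$ even though you had set $n = 2m+1$.
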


\begin{proof}
The pair $(V, U)$ is exactly the exterior witness of \Cref{cor:sharp}, namely $\Lambda^m \C^{2m}$ together with the exterior power of a hyperplane, and only the group has changed, from $\SL_{2m}(\C)$ to a symmetric group acting through its standard representation.
The module $V = \Lambda^m E$ is the hook $(m+1, 1^m)$ as in the proof of \Cref{cor:sn-std}, hence irreducible.
Every translate $gU$ with $g \in S_{2m+1}$ equals $\Lambda^m(gW)$ for the hyperplane $gW \leq E$, so \Cref{lem:wedge-lower} with $j = m$ shows that no $m$ translates of $U$ span $V$, and $\diamp[S_{2m+1}](V, U) \geq m+1$.

As for the numbers, $N = \dim V = \binom{2m}{m}$ and $\dim U = \binom{2m-1}{m} = N/2$, so the trivial bound is $\lceil N/\dim U \rceil = 2$ and $\rr(V, U) \geq (m+1)/2$.
Finally $N \leq 4^m$ gives $\ln N \leq 2m \ln 2$, and hence $(m+1)/2 > m/2 \geq \ln N/(4 \ln 2)$.
\end{proof}

\begin{remark}\label{rem:alternating}
The proposition restricts once more, to the alternating groups.
Recall that an irreducible $S_n$-module restricts irreducibly to $A_n$ unless its partition is self-conjugate, in which case it splits into two nonisomorphic irreducibles \cite[Section 5.1]{FH91}.
The middle hook $(m+1, 1^m)$ is self-conjugate, so $\Lambda^m \C^{2m}$ splits upon restriction to $A_{2m+1}$, but the hook $(m+2, 1^{m-1})$ is not, so $V = \Lambda^{m-1} \C^{2m}$ stays irreducible for $A_{2m+1}$.
For a hyperplane $W \leq \C^{2m}$ and $m \geq 2$, \Cref{lem:wedge-lower} with $j = m-1$ then gives $\diamp[A_{2m+1}](V, \Lambda^{m-1} W) \geq m$ against a trivial bound of two, and hence $\cc(V) \geq m/2 \geq \ln \dim V/(4 \ln 2)$ exactly as before.

Even degrees behave the same way.
For $n = 2m$ with $m \geq 2$ the standard module of $S_{2m}$ is $E = \C^{2m-1}$, and no hook of $S_{2m}$ is self-conjugate, since $(2m-j, 1^j)$ conjugates to $(j+1, 1^{2m-j-1})$ and $2m - j = j+1$ has no integer solution.
So $V = \Lambda^{m-1} E$ is irreducible for $S_{2m}$ and for $A_{2m}$ at once.
\Cref{lem:wedge-lower} with $j = m-1$ gives $\diamp[A_{2m}](V, \Lambda^{m-1}W) \geq m$ for a hyperplane $W \leq E$, and hence the same over $S_{2m}$.
The trivial bound is again two because $\dim V/\dim \Lambda^{m-1}W = (2m-1)/m$.
Since $\dim V = \binom{2m-1}{m-1} \leq 4^m$, we get $\cc(V) \geq m/2 \geq \ln \dim V/(4 \ln 2)$ along the even degrees as well.
So every symmetric and alternating group of either parity carries an exterior power of its standard module on which $\cc$ is logarithmic in the dimension.
\end{remark}

The above should be compared with \Cref{cor:sn-std}.
Both statements are about symmetric groups, and the module $\Lambda^m \C^{2m}$ of \Cref{prop:sn-ext} is built from the standard module of \Cref{cor:sn-std}, yet
\[
  \cc_{S_n}\bigl(\C^{n-1}\bigr) = 1
  \qquad \text{while} \qquad
  \cc_{S_{2m+1}}\bigl(\Lambda^m \C^{2m}\bigr) \geq \frac{\ln \binom{2m}{m}}{4 \ln 2}.
\]
So the complexity moves from the smallest value the invariant takes to within a constant of the largest one that \Cref{prop:cxupper} allows.

\subsection{Equivariant morphisms}\label{subsec:equivariant-witness}

We now apply the previous constructions to show that the bound of \Cref{cor:equivariant} for the diameter with respect to the image of an equivariant morphism is sharp up to a constant factor.

Let $G = \SL_{k+1}(\C)$, $E = \C^{k+1}$, $V = \Sym^k E$ and $N = \dim V = \binom{2k}{k}$.
We write
\[
  Z = \bigcup_{X \leq E \text{ a hyperplane}} \Sym^k X \ \subseteq V
\]
for the forms supported on some hyperplane.
This union is an algebraic variety, as the following incidence construction shows.
Consider
\[
  T = \bigl\{ ([\lambda], z) \in \mathbf{P}(E^*) \times V \ \bigm| \ z \in \Sym^k(\ker \lambda) \bigr\} .
\]
The fibre of $T$ over a point $[\lambda]$ is the linear subspace $\Sym^k(\ker \lambda) \leq V$ of dimension $\binom{2k-1}{k} = N/2$, and $T$ is the total space of a vector bundle of rank $N/2$ over $\mathbf{P}(E^*) \cong \mathbf{P}^k$.
In particular $T$ is irreducible of dimension $N/2 + k$.
The projection $\mathbf{P}(E^*) \times V \to V$ is proper and carries $T$ onto $Z$, and hence $Z$ is a closed irreducible subvariety of $V$ with
\[
  \dim Z \leq \frac{N}{2} + k .
\]

Fix $r \geq N/2$, put $W = E^* \oplus E^{\oplus 2r}$ and define $f \colon W \to V$ by
\[
  f(\lambda, u_1, u_1', \ldots, u_r, u_r')
  = \sum_{i \leq r} \bigl( \lambda(u_i') u_i - \lambda(u_i) u_i' \bigr)^{k}.
\]
Writing $v_i = \lambda(u_i')u_i - \lambda(u_i)u_i'$ we have $\lambda(v_i) = 0$, so each $v_i$ lies in the hyperplane $\ker \lambda$ and the value of $f$ lies in $Z$.
In words, the first argument chooses a hyperplane and the remaining ones choose $r$ vectors inside it.

\begin{proposition}\label{prop:equivariant-witness}
Let $k \geq 4$.
Then $f$ is a $G$-equivariant algebraic morphism with $\im f = Z$, and for a general $w \in W$ we have $N/2 \leq \dim \im(D_w f) \leq N/2 + k$ and
\[
  k - 2 \leq \diamp[G](V, \im(D_w f)) \leq k+1,
  \qquad
  k - 2 \leq \diamp\bigl(V, \im f\bigr) \leq 2k+2
\]
with
\[
  \rr(V, \im(D_w f)) > \frac{\ln \dim V}{4 \ln 2} - 1.
\]
\end{proposition}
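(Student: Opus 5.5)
The plan is to verify the structural claims about $f$ first, then pin $\im(D_w f)$ between $\Sym^k X$ and the tangent space of $Z$, and then get the lower bounds by the annihilator-and-products argument of \Cref{thm:cartan}, with a dimension count to absorb the extra $k$ tangent directions.

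\emph{Equivariance and image.} The pairing $\lambda(u)$ is $G$-invariant under the dual action, so each $v_i$ transforms as $g v_i$, and $f(gw) = g f(w)$. We already saw $\im f \subseteq Z$. For the reverse inclusion, take $p \in \Sym^k(\ker\lambda)$. Its Waring rank is at most $\dim \Sym^k(\ker \lambda) = N/2 \leq r$, so $p = \sum_i x_i^k$ with $x_i \in \ker\lambda$, at most $r$ of them nonzero. Fix $e$ with $\lambda(e)=1$ and put $u_i = x_i$, $u_i' = e$. Then $v_i = x_i$, and any unused slots are set to zero.

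\emph{Bounds on $\im(D_w f)$.} For general $w$ the point $f(w)$ is a general point of the irreducible variety $Z$. By generic smoothness, $\im(D_w f) = T_{f(w)}Z$, so $\dim \im(D_w f) \leq \dim Z \leq N/2+k$. For the lower bound, fix $\lambda$ and vary only the $u$'s. With $u_i'=e$ and $u_i\in\ker\lambda$ general, the derivative contains $k\,v_i^{k-1}\cdot \dot v_i$, where $\dot v_i$ ranges over $\ker\lambda$. In particular it contains each $v_i^k$. Since $r \geq N/2$, general $v_i \in X=\ker\lambda$ have $k$-th powers spanning $\Sym^k X$, so $\Sym^k X \subseteq \im(D_w f)$. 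This gives $\dim \im(D_w f)\ge N/2$, and it also gives the upper bound: $\diamp[G](V,\im(D_wf)) \le \diamp[G](V,\Sym^k X) = k+1$ by \Cref{prop:sym}, because translates of a superspace span at least as much. The bound $\diamp(V,\im f)\le 2k+2$ then follows from \cite[Theorem 7.8]{JS25}.

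\emph{Lower bounds (the main step).} A naive dimension count only gives $2$, so the structure has to be used. Put $U' = \im(D_w f)$ and $B = (\Sym^k X)^\perp = \lambda R_{k-1}$. Since $\Sym^k X \subseteq U'$ and $\dim U' \le N/2 + k$, the annihilator $A = U'^\perp$ is a subspace of $B$ of codimension at most $k$. The same holds for every translate: $(gU')^\perp$ has codimension at most $k$ in $(g\lambda)R_{k-1}$.

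Take $m$ translates $g_1U',\dots,g_mU'$ with the hyperplanes $g_i\lambda$ pairwise non-proportional. Then
\[
\bigcap_{i\le m}(g_iU')^\perp
\]
has codimension at most $mk$ inside $\bigcap_i (g_i\lambda)R_{k-1} = \lambda_1\cdots\lambda_m R_{k-m}$, where $\lambda_i = g_i\lambda$. The latter space has dimension $\binom{2k-m}{k}$, because $R$ is a polynomial ring in $k+1$ variables. If some of the hyperplanes coincide, we drop the repeats, which only helps. For $m = k-3$ we have $\binom{k+3}{3} > k(k-3)$, so the intersection is nonzero. Hence $k-3$ translates never span $V$, and $\diamp[G](V,U')\ge k-2$.

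For $\diamp(V,\im f)$, each element of an $m$-fold sum $Z+\dots+Z$ lies in some $\Sym^kX_1+\dots+\Sym^kX_m$. The union over $(X_i)\in(\mathbf{P}^k)^m$ is the image of a bundle of total dimension at most
\[
mk + N - \binom{2k-m}{k},
\]
which is less than $N$ for $m\le k-3$. So the sum is not all of $V$, and $\diamp(V,\im f)\ge k-2$.

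\emph{Ratio.} Since $N/2\le \dim U' < N$, the trivial bound is $2$, so $\rr \ge (k-2)/2$. As $N=\binom{2k}{k}<4^k$, we get $\ln N/(4\ln 2) < k/2$, whence $\rr > \ln N/(4\ln2)-1$. The expected obstacle is the identification $\im(D_wf)=T_{f(w)}Z$ together with the dimension bound; if generic smoothness is awkward to invoke, I would instead bound $\im(D_w f)$ directly as $\Sym^k X$ plus the image of the $\lambda$-derivative, modulo $\Sym^kX$. That image is spanned by $\sum_i k\,v_i^{k-1}\,(\mu(u_i')u_i-\mu(u_i)u_i')$ over $\mu\in E^*$, so it has dimension at most $\dim E^* = k+1$. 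This gives the codimension bound $k+1$ in place of $k$, which still suffices by the same inequality.
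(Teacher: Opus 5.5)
Your proposal is correct and follows the paper's proof essentially step for step: the same image computation, generic smoothness for the upper bound on $\dim \im(D_w f)$, the containment $\Sym^k X \subseteq \im(D_w f)$ feeding both \Cref{prop:sym} and the codimension-$k$ count, and the inequality $\binom{2k-m}{k} > mk$ at $m = k-3$ for both lower bounds. The differences are cosmetic. You obtain $\Sym^k X \subseteq \im(D_w f)$ from an explicit derivative at a special point, which passes to general $w$ by lower semicontinuity of the rank of the $u$-part of $D_w f$, rather than from dominance of $f$ with $\lambda$ fixed. Also, in your fallback the direction $\mu = \lambda$ contributes $k f(w) \in \Sym^k X$, so that argument in fact gives the sharp bound $N/2 + k$ for every $w$ with $\lambda \neq 0$.
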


\begin{proof}
Let $g \in G$ act on $E^*$ by $(g\lambda)(u) = \lambda(g^{-1}u)$.
Then $(g\lambda)(gu_i') \cdot gu_i - (g\lambda)(gu_i) \cdot gu_i' = g v_i$, and hence $f(g \cdot w) = g \cdot f(w)$, so $f$ is equivariant.
For the image, fix $\lambda$ and a vector $u'$ with $\lambda(u') = 1$.
As $u$ ranges over $E$, the vector $u - \lambda(u)u'$ ranges over all of $X = \ker \lambda$, so the values of $f$ with $\lambda$ fixed are the sums of $r$ $k$-th powers of elements of $X$.
These powers span $\Sym^k X$, and every scalar in $\C$ has a $k$-th root, so any $\sum_j c_j w_j^{k}$ is already a sum of $k$-th powers.
Since $r \geq N/2 = \dim \Sym^k X$, we get $\im f = Z$.

Let $U = \im(D_w f)$.
We now bound $\dim U$ from both sides.
The map obtained from $f$ by fixing $\lambda$ is a dominant morphism onto $\Sym^k X$, so its differential at a general point is surjective and $U \supseteq \Sym^k X$, whence $\dim U \geq \dim \Sym^k X = N/2$.
In the other direction, $f$ is a dominant morphism from the smooth variety $W$ onto the irreducible variety $Z$ built above, so generic smoothness \cite[III, Corollary 10.7 and Proposition 10.4]{Har77} makes $f$ smooth over a dense open subset of the smooth locus of $Z$.
For a general $w$ the differential $D_w f$ is therefore surjective onto the tangent space of $Z$ at $f(w)$, and hence
\[
  \dim U = \dim Z \leq \frac{N}{2} + k .
\]
Since $k \geq 4$ we have $k < \binom{2k-1}{k} = N/2$, so $N/2 + k < N$ and the two bounds read $N/2 \leq \dim U < N$.
Consequently the trivial bound is $\lceil N / \dim U \rceil = 2$.

Everything else rests on one count, and here repeated hyperplanes have to be allowed.
Identify $V^*$ with $\C[x_1, \ldots, x_{k+1}]_k$ as in \Cref{prop:sym}, so that the annihilator of $\Sym^k X$ is the space $\lambda \C[x]_{k-1}$ of forms divisible by $\lambda$, where $X = \ker \lambda$.
Let $X_1, \ldots, X_m$ be any hyperplanes with $m \leq k$, cut out by linear forms $\lambda_1, \ldots, \lambda_m$, and let $\lambda_1, \ldots, \lambda_r$ be the distinct ones among them after renumbering.
A form annihilating every $\Sym^k X_i$ is divisible by each of the $r$ pairwise non-proportional forms $\lambda_1, \ldots, \lambda_r$, hence by their product, and therefore
\[
  \bigcap_{i \leq m} (\Sym^k X_i)^{\perp}
  = (\lambda_1 \cdots \lambda_r) \C[x]_{k-r},
  \quad \text{of dimension} \quad
  \binom{2k-r}{k} \geq \binom{2k-m}{k} .
\]
The inequality holds because $r \leq m$, so the count $\binom{2k-m}{k}$ is a lower bound for every $m$-tuple of hyperplanes, repetitions included.
We claim that
\begin{equation}\label{eq:witness-count}
  \binom{2k-m}{k} > mk
\end{equation}
forces both that $m$ translates of $U$ fail to span $V$ and that $mZ \neq V$.
The quantity $mk$ is the number of parameters needed to move the $m$ hyperplanes, and \eqref{eq:witness-count} says that the forms surviving all $m$ of them outnumber those parameters.

For the first assertion, take $g_1, \ldots, g_m \in G$ and put $X_i = g_i X$, a hyperplane for each $i$ and possibly with repetitions.
By the dimension bound above, $U^{\perp}$ sits inside $(\Sym^k X)^{\perp}$ with codimension at most $k$, so each $(g_i U)^{\perp}$ is cut out of $(\Sym^k X_i)^{\perp}$ by at most $k$ linear conditions.
Imposing these $mk$ conditions on the intersection just computed leaves
\[
  \dim \bigcap_{i \leq m} (g_i U)^{\perp}
  \geq \binom{2k-m}{k} - mk > 0 ,
\]
so the translates $g_1 U, \ldots, g_m U$ do not span $V$.
For the second, every point of $mZ$ lies in a sum $\sum_{i \leq m} \Sym^k X_i$ for some $m$-tuple of hyperplanes, and the count above bounds the codimension of such a sum below by $\binom{2k-m}{k}$, again with repetitions allowed.
These sums form a family of subspaces over the $mk$-dimensional parameter space $(\mathbf{P}(E^*))^m$, and hence
\[
  \dim mZ \leq N - \binom{2k-m}{k} + mk < N .
\]

Take $m = k-3$.
This turns \eqref{eq:witness-count} into $\binom{k+3}{3} > (k-3)k$, which holds for every $k \geq 4$.
Thus $\diamp[G](V,U) \geq k-2$ and $\diamp(V, \im f) \geq k-2$.
The upper bound on the left is \Cref{prop:sym} applied to $\Sym^k X \subseteq U$, and the upper bound on the right is \cite[Theorem 7.8]{JS25} applied to the left.
Finally $N < 4^k$ gives $k > \ln N / (2 \ln 2)$, and the estimate for $\rr(V,U)$ follows, as desired.
\end{proof}

\section{Large complexity from small groups}\label{sec:small}

Finite groups are compact, so \Cref{thm:covering} and \Cref{def:complexity} apply to them, and since a fixed finite group has only finitely many irreducible representations, every statement in this section is about families.
The representations below make \Cref{prop:cxupper} sharp in the leading constant, and together with the covering theorem they complete \Cref{thm:C} of the introduction, which pins down the largest possible order of the complexity.

\subsection{The Heisenberg group and additive covering}\label{subsec:fin-heis}

We first recall the Heisenberg group $H_p$ of order $p^3$ for an odd prime $p$.
Let $\omega = e^{2\pi i/p}$, and let the operators $X$ and $Z$ act on $V = \C[\Z/p]$, with basis $(e_j)_{j \in \Z/p}$, by
\[
  X e_j = e_{j+1}
  \qquad \text{and} \qquad
  Z e_j = \omega^j e_j.
\]
The Heisenberg group is $H_p = \langle X, Z \rangle \leq \GL(V)$.
We have $ZXZ^{-1}X^{-1} = \omega \Id_V$, and every element of $H_p$ has the form $\omega^c X^a Z^b$ with $a, b, c \in \Z/p$.

The $p^2$ operators $X^a Z^b$ are linearly independent, since operators with different $a$ occupy disjoint cyclic diagonals, while for a fixed $a$ the diagonal coefficient vectors $(\omega^{bj})_{j \in \Z/p}$ form the Fourier basis of $\C^p$ as $b$ varies.
As $\dim \End(V) = p^2$, they are a basis of $\End(V)$.
Every $H_p$-invariant subspace of $V$ is therefore invariant under all of $\End(V)$ and so is $0$ or $V$, which makes $V$ irreducible, the \emph{Schrödinger representation} of $H_p$.
For a nonempty subset $S \subseteq \Z/p$ let $U_S = \langle e_s \mid s \in S \rangle$ be the coordinate subspace on $S$.

\begin{proposition}\label{prop:heis-dict}
For every nonempty $S \subseteq \Z/p$,
\[
  \diamp[H_p]\bigl(\C[\Z/p], U_S\bigr) = \min\bigl\{|T| \bigm| T \subseteq \Z/p,\ S + T = \Z/p\bigr\},
\]
the least number of translates of $S$ needed to cover $\Z/p$, against the trivial bound $\lceil p/|S| \rceil$.
\end{proposition}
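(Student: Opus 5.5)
The plan is to compute the orbit of $U_S$ under $H_p$ explicitly and then turn the spanning condition into a covering condition on index sets. Everything rests on one observation: every element of $H_p$ has the form $\omega^c X^a Z^b$, so it maps the basis vector $e_s$ to a nonzero multiple of $e_{s+a}$. Concretely, $Z^b$ is diagonal in the basis $(e_j)$ and preserves every coordinate subspace, scalars act trivially on subspaces, and $X^a e_s = e_{s+a}$. Hence
\[
  \omega^c X^a Z^b \cdot U_S = U_{S+a}.
\]
So the translates of $U_S$ are exactly the coordinate subspaces $U_{S+a}$ with $a \in \Z/p$, just as in \Cref{ex:heisenberg_12}.

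Next I would show that a sum of coordinate subspaces is again a coordinate subspace, namely
\[
  \sum_{a \in T} U_{S+a} = U_{\bigcup_{a \in T}(S+a)} = U_{S+T}.
\]
This sum equals $V = U_{\Z/p}$ exactly when $S+T = \Z/p$, because the $e_j$ form a basis. Now take $g_1, \dots, g_m \in H_p$ whose translates span $V$, and write $a_i$ for the $X$-exponent of $g_i$. Then $T = \{a_1, \dots, a_m\}$ has $|T| \leq m$ and satisfies $S + T = \Z/p$. Repeated $a_i$ only make $T$ smaller, so they cause no difficulty. Conversely, given $T$ with $S+T=\Z/p$, the elements $X^a$ for $a \in T$ give $|T|$ translates that span $V$. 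Taking minima on both sides gives the claimed equality.

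The trivial bound $\lceil p/|S| \rceil$ is the general dimension count from the introduction, since $\dim U_S = |S|$. It also agrees with the combinatorial count $|S+T| \leq |S||T|$.

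I expect no real obstacle here. The only point that needs care is the first step: we must check that the diagonal part $Z^b$ and the central scalars really fix coordinate subspaces. This is what makes the orbit of $U_S$ as small as the orbit of $S$ under translation.
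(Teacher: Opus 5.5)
Your proposal is correct and follows the paper's proof essentially step for step. You identify the orbit of $U_S$ as $\{U_{S+a}\}$, using that scalars and $Z^b$ fix coordinate subspaces. You then observe that sums of coordinate subspaces are coordinate subspaces on the union, so $\sum_{a \in T} U_{S+a} = U_{S+T}$. Finally you note that repeated translates never help.
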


\begin{proof}
Every element of $H_p$ has the form $\omega^c X^a Z^b$.
Scalars act trivially on subspaces, the modulation $Z^b$ is diagonal and therefore fixes every coordinate subspace, and $X^a U_S = U_{S+a}$.
So the translates of $U_S$ are exactly the subspaces $U_{S+t}$ with $t \in \Z/p$.
A sum of coordinate subspaces is the coordinate subspace on the union of the index sets, so $\sum_{t \in T} U_{S+t} = U_{S+T}$, and this is all of $V$ if and only if $S + T = \Z/p$.
Repeated translates never help, so minimizing the number of group translates is the same as minimizing $|T|$.
\end{proof}

The right-hand side of the proposition is the \emph{additive covering number} of $S$, denoted by $\tau(S, \Z/p)$.
This quantity has been studied since Lorentz \cite{Lor54} and Newman \cite{New67}, and in the generality of arbitrary groups by Bollobás, Janson and Riordan \cite{BJR11}.
That paper writes
\[
  \kappa(S,G) = \frac{\tau(S,G)}{|G|/|S|}
\]
for the \emph{covering multiplicity}, which is the covering number normalized by the trivial bound.
In this notation \Cref{prop:heis-dict} says that $\rr(V, U_S)$ differs from $\kappa(S, \Z/p)$ only through the ceiling in the trivial dimension bound, and the lower bounds proved for $\kappa$ transfer directly.

\begin{theorem}\label{thm:heis}
Let $\epsilon > 0$.
For every sufficiently large prime $p$ there is a subset $S \subseteq \Z/p$ with
\[
  \rr\bigl(\C[\Z/p], U_S\bigr) \geq (1 - \epsilon) \ln p.
\]
Consequently, $\cc(\C[\Z/p]) = (1 + o(1)) \ln \dim \C[\Z/p]$ as $p \to \infty$.
\end{theorem}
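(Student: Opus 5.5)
The plan is to reduce everything to additive covering via \Cref{prop:heis-dict}, and then to exhibit a set $S \subseteq \Z/p$ of size $s \approx \ln p$, or more generally $s = p^{o(1)}$ but tending to infinity, whose covering number is about $p\ln s/s$. We also want $\ln s \approx \ln p$, so the natural regime is $s = p^{1-o(1)}$. The model case is a random set of density $s/p$, for which the covering number is $(1+o(1))(p/s)\ln s$. This is the content of the result of Bollobás, Janson and Riordan \cite{BJR11}: for $\Z/p$ and suitable $s$ with $\ln s \sim \ln p$ and $s = o(p)$, some (indeed a random) $S$ of size $s$ has $\kappa(S,\Z/p) \geq (1-o(1))\ln s$. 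Concretely, I would choose $s = \lfloor p/\ln^2 p\rfloor$, so that $\ln s = (1-o(1))\ln p$ and $p/s \to \infty$.

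For completeness of the plan, here is the direct argument if one does not want to quote \cite{BJR11}. Let $S$ be a random subset with each element included independently with probability $q = s/p$. Fix $T$ with $|T| = t \leq (1-\epsilon)(p/s)\ln s$. Then $x \notin S+T$ exactly when $x - T$ avoids $S$, which has probability $(1-q)^t$. The expected number of uncovered points is
\[
  p(1-q)^t \approx p\, s^{-(1-\epsilon)},
\]
which tends to infinity. We need concentration that is strong enough to union bound over all $\binom{p}{t}$ choices of $T$. This is the main obstacle. The events for different $x$ are dependent, so I would use Janson's inequality, which is natural here since uncoveredness is an intersection of "absent" events. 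Alternatively, one can restrict to a subfamily of points $x$ whose sets $x - T$ are pairwise disjoint. Such a subfamily has size at least about $p/t^2$, and on it the events are independent. The failure probability is then roughly $\exp(-(p/t^2)\, s^{-(1-\epsilon)})$, while the union bound costs $\binom{p}{t} \leq \exp(t\ln p)$. With $t \approx (p/s)\ln p$ the required inequality becomes
\[
  \frac{p\, s^{\epsilon}}{s\, t^2} \gg t\ln p,
\]
that is $s^{\epsilon} \gg (p/s)^2\ln^3 p$. This holds for $s = p/\ln^2 p$. Finally, $|S|$ is concentrated near $s$, so we may condition on $|S| \leq (1+\epsilon)s$. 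This yields an $S$ with $\tau(S) \geq (1-\epsilon)(p/s)\ln s$.

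Then \Cref{prop:heis-dict} gives
\[
  \rr(V,U_S) = \frac{\tau(S)}{\lceil p/|S|\rceil} \geq \frac{(1-\epsilon)(p/s)\ln s}{(1+\epsilon)p/s + 1},
\]
and since $p/s \to \infty$ the ceiling costs only a factor $1+o(1)$. Together with $\ln s = (1-o(1))\ln p$ this gives $\rr \geq (1-O(\epsilon))\ln p$, and we rescale $\epsilon$. For the consequence, the lower bound shows $\cc(\C[\Z/p]) \geq (1-o(1))\ln p$. The upper bound $\cc \leq 2+\ln p = (1+o(1))\ln p$ is \Cref{prop:cxupper}, so $\cc(\C[\Z/p]) = (1+o(1))\ln \dim \C[\Z/p]$.
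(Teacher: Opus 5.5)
Your proposal is correct and follows the paper's route: reduce to the additive covering number via \Cref{prop:heis-dict}, invoke \cite[Theorem 4.7]{BJR11} with $G = H = \Z/p$ (your choice $s = \lfloor p/\ln^2 p \rfloor$ satisfies $p/s \to \infty$ and $p \leq s^{1+\delta/7}$ just as well as the paper's $k = \lceil p/\ln p \rceil$), absorb the ceiling in the trivial bound, and finish with \Cref{prop:cxupper}. Your optional self-contained argument is the same disjoint-preimage and union-bound mechanism the paper uses for \Cref{thm:2trans}, and it works, except that controlling the denominator $\lceil p/|S| \rceil$ requires the lower bound $|S| \geq s/(1+\epsilon)$, not the upper bound $|S| \leq (1+\epsilon)s$ you wrote; Chernoff gives both with high probability, so nothing breaks.
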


\begin{proof}
We may assume $\epsilon < 1$, and we put $\delta = \epsilon/2$ and $k = \lceil p/\ln p \rceil$.
By \cite[Theorem 4.7]{BJR11}, if $n = n(k)$ and $h = h(k)$ satisfy $n \geq h$, $h/k \to \infty$ and $n \leq k^{1 + \delta/7}$, then for any group $G$ of order $n$ and any $H \subseteq G$ with $|H| = h$, a proportion $1 - o(1)$ of the $k$-element subsets $S \subseteq H$ satisfy $\kappa(S, G) \geq (1-\delta)\ln k$.
We apply this with $G = H = \Z/p$, so that $n = h = p$.
The hypotheses hold, since $p/k = (1 + o(1))\ln p \to \infty$, while $p \leq k^{1+\delta/7}$ is the same as $p/k \leq k^{\delta/7}$, and the left-hand side is asymptotic to $\ln p$ against a fixed positive power of $p$ on the right.
So for all large $p$ there is a $k$-element set $S \subseteq \Z/p$ with
\[
  \tau(S, \Z/p) = \frac{p}{k}\,\kappa(S, \Z/p) \geq (1-\delta)\frac{p}{k}\ln k .
\]
By \Cref{prop:heis-dict} the left-hand side is $\diamp[H_p](\C[\Z/p], U_S)$, while the trivial bound is $\lceil p/k \rceil \leq (p/k)(1 + k/p)$, and dividing the two estimates gives
\[
  \rr\bigl(\C[\Z/p], U_S\bigr)
  \geq \frac{(1-\delta) \ln k}{1 + k/p}
  \geq (1-\delta)\bigl(1 - o(1)\bigr) \ln p,
\]
since $\ln k = \ln p - \ln \ln p + o(1)$ and $k/p \to 0$.
The right-hand side is at least $(1-\epsilon)\ln p$ for all large $p$.
For the consequence, $\dim \C[\Z/p] = p$, so $\cc(\C[\Z/p]) \geq (1-\epsilon)\ln p$ eventually for every $\epsilon > 0$, while \Cref{prop:cxupper} gives $\cc(\C[\Z/p]) \leq 2 + \ln p$, as desired.
\end{proof}

\subsection{Two-transitive groups of small order}\label{subsec:fin-2trans}

In the previous subsection the action of the Heisenberg group on the subspaces $U_S$ was essentially that of $\Z/p$ translating its subsets.
Such an action is very rigid, so it is not surprising that the covering number can be large.
There are permutation groups that offer far more translates of a given set, and $2$-transitive groups are the extreme case.
We show that, perhaps surprisingly, these groups typically give rise to large complexity as well.

Let $G$ act $2$-transitively on a finite set $\Omega$ with $|\Omega| = n$, and let $V \leq \C[\Omega]$ be the space of functions summing to zero, the \emph{deleted permutation module}, of dimension $n-1$.
It is irreducible, and this is exactly $2$-transitivity.
For $S \subseteq \Omega$ with $|S| \geq 2$ put
\[
  U_S = \left\{ f \in V \mid f = 0 \text{ outside } S \right\},
\]
of dimension $|S| - 1$ by the single relation $\sum_{x \in S}f(x) = 0$, and note that $g U_S = U_{gS}$ for every $g \in G$.
Every element of $\sum_{i \leq m} U_{g_i S}$ vanishes outside $\bigcup_{i \leq m} g_i S$.
If some point $x$ is missing from that union, then $V$ still contains a function nonzero at $x$, for instance $\mathbf{1}_{\{x\}} - \mathbf{1}_{\{y\}}$ for any $y \neq x$, so the sum is not all of $V$.
Covering $\Omega$ is therefore necessary for spanning $V$, and
\begin{equation}\label{eq:cover-lower}
  \diamp[G](V, U_S) \geq \min\left\{ m \Bigm| \bigcup_{i \leq m} g_i S = \Omega \ \text{for some } g_1, \ldots, g_m \in G \right\}.
\end{equation}
Unlike in the Heisenberg example, covering is not also sufficient for spanning, but the one-sided estimate is all we need.

\begin{theorem}\label{thm:2trans}
Let $(G_i, \Omega_i)$ be a sequence of finite $2$-transitive permutation groups of degrees $n_i \to \infty$ with $\ln |G_i| = n_i^{o(1)}$, and let $V_i$ be the deleted permutation modules.
Then
\[
  \cc(V_i) = (1 + o(1)) \ln \dim V_i.
\]
\end{theorem}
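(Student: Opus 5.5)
The upper bound is \Cref{prop:cxupper}, so only the lower bound $\cc(V_i) \geq (1-o(1))\ln n_i$ needs work. By \eqref{eq:cover-lower} it suffices to find $S \subseteq \Omega_i$ of size $k$ such that no family of $m$ translates $g_1 S, \ldots, g_m S$ covers $\Omega_i$ for $m$ somewhat below $(n/k)\ln n$. The trivial bound is $\lceil (n-1)/(k-1)\rceil \approx n/k$. The plan is a first-moment argument over a uniformly random $k$-subset $S$, with a union bound over all $m$-tuples of group elements. The small order of $G$ is exactly what makes that union bound affordable.

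Fix $\epsilon>0$ and write $n = n_i$. Take $k = \lceil n/\ln n \rceil$ and $m = \lfloor (1-\epsilon)(n/k)\ln n \rfloor$. Fix an $m$-tuple $(g_1,\dots,g_m)$ and let $S$ be a uniform random $k$-subset. Each $g_jS$ is then a uniform random $k$-subset, but the translates are correlated. To get around this, I will bound the probability that their union is $\Omega$ by the expected number of points missed, estimated from below via concentration. A cleaner route is to bound
\[
\Pr\Bigl[\bigcup_j g_jS = \Omega\Bigr] \leq \Pr\bigl[x \in \textstyle\bigcup_j g_jS \ \text{for all } x\bigr]
\]
by exploiting that $x \notin \bigcup_j g_j S$ iff $S$ avoids the set $A_x = \{g_j^{-1}x\}$, of size at most $m$.

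The covering event says $S$ meets every $A_x$. I will use Janson's inequality, or the simpler negative-correlation (Harris/FKG-type, for hypergeometric measure one can pass to independent $p$-random sets with $p=k/n$ at polynomial cost) bound
\[
\Pr[S \cap A_x \neq \emptyset \ \forall x] \leq \prod_x \Pr[S\cap A_x\neq\emptyset].
\]
This holds because each event is increasing, so under a product measure the intersection is positively correlated. The inequality runs the wrong way, so I need Janson's upper bound instead. Each $x$ is missed with probability at least $(1-p)^m \approx e^{-pm} \geq n^{-(1-\epsilon)}$, so the expected number of missed points is $\mu \geq n^{\epsilon}$. The dependency term $\Delta$ counts pairs $x \neq y$ with $A_x \cap A_y \neq \emptyset$. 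Each point lies in at most $m$ of the sets $A_x$, since $z \in A_x$ iff $x = g_j z$. So $\Delta \lesssim n m^2 p^{\,\cdot}$-type quantities, which should be $O(\mu \cdot m \cdot p \cdot \text{polylog})$. Janson then gives that all points are covered with probability at most $\exp(-\mu^2/(2\mu+\Delta)) \leq \exp(-n^{\epsilon - o(1)})$.

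The union bound over the at most $|G|^m$ tuples costs $\exp(m\ln|G|) = \exp\bigl((\ln n)^2 \cdot n^{o(1)}\bigr)= \exp(n^{o(1)})$, since $m = O((\ln n)^2)$. This is beaten by $\exp(-n^{\epsilon-o(1)})$. So some $S$ has no covering family of size $m$, and hence $\diamp(V,U_S) > m$. Dividing by the trivial bound gives $\rr \geq (1-\epsilon)(1-o(1))\ln n$, with $\ln n \sim \ln\dim V$.

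I expect the main obstacle to be controlling $\Delta$ in Janson's inequality, or equivalently the correlations among the events, together with the transfer from uniform $k$-sets to $p$-random sets. Only the geometric fact that the sets $A_x$ have bounded overlap multiplicity $m$ is available; no structure of $G$ is used. If Janson is too lossy, my fallback is to pick disjoint-ish points: choose $\gg n^{\epsilon}/m^2$ points $x$ whose sets $A_x$ are pairwise disjoint (greedily, since each $A_x$ meets at most $m^2$ others). The avoidance events for these points are then negatively correlated under the hypergeometric measure, so the probability that none is missed is at most $(1-n^{-(1-\epsilon)})^{n/m^2} \leq \exp(-n^{\epsilon}/m^2)$, which still dominates the union bound.
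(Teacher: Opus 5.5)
Your proposal is correct, and your fallback is essentially the paper's own proof. The paper also takes a random set of density $1/\ln n$ and $m \approx c\ln^2 n$ with $c<1$, and fixes an $m$-tuple. It then greedily picks $\lceil n/m^2\rceil$ points whose preimage sets $\{g_i^{-1}x\}$ are pairwise disjoint; each choice forbids at most $m^2$ points, which is exactly your overlap count. Independence of the corresponding avoidance events gives $\exp(-n^{1-c}/(3m^2))$, followed by a union bound over the $|G|^m$ tuples.

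The one difference is the model. The paper works with the product measure from the start, so independence is exact and Chernoff controls $|S|$. With uniform $k$-sets you need one of two things:
\begin{itemize}
\item the transfer to the product model, which you mention, at cost $O(\sqrt{n})$;
\item negative association of the uniform $k$-subset measure.
\end{itemize}
Pairwise negative correlation of the avoidance events, which is what you invoke, does not by itself give the product bound over $\sim n/m^2$ hitting events.

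There is also a slip in the fallback. The greedy step yields about $n/m^2$ points, not $n^{\epsilon}/m^2$. Your subsequent estimate $(1-n^{-(1-\epsilon)})^{n/m^2}$ correctly uses $n/m^2$, and with only $n^{\epsilon}/m^2$ points it would be useless.

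Your primary Janson route also works, once applied in the product model to the events $B_x=\{A_x\subseteq \Omega\setminus S\}$. The dependency term is easier than you feared: since $B_x\cap B_y\subseteq B_x$ and each $x$ has at most $m^2$ neighbours $y$, you get $\Delta\le m^2\mu$. The extended Janson inequality then gives a bound of order $\exp(-\mu/(2m^2))$ with $\mu\ge n(1-p)^m\gtrsim n^{\epsilon}/e$. This is the same bound as the disjoint-points argument, so Janson buys nothing here over the paper's elementary greedy selection.
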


\begin{proof}
Fix a constant $0 < c < 1$, put $\alpha = 1/\ln n$ and $m = \lfloor c \ln^2 n \rfloor$, and take $n$ large enough that $0 < \alpha \leq 1/2$ and $m \geq 1$.
Let $S \subseteq \Omega$ be a random subset in which each point is included independently with probability $\alpha$.

We first bound the probability that some $m$ translates of $S$ cover $\Omega$.
Fix $g_1, \ldots, g_m \in G$ and choose points $x_1, x_2, \ldots \in \Omega$ so that the preimage sets $P_j = \{g_i^{-1} x_j \mid i \leq m\}$ are pairwise disjoint.
Once $x_1, \ldots, x_{j-1}$ are chosen, the set $P_j$ meets some earlier $P_{j'}$ exactly when $x_j = g_ag_b^{-1}x_{j'}$ for some $a, b \leq m$ and $j' < j$, which forbids at most $(j-1)m^2$ points.
The process therefore yields
\[
  r = \Bigl\lceil \frac{n}{m^2} \Bigr\rceil
\]
points, because $(r-1)m^2 < n$.
Each $P_j$ has at most $m$ elements, and $S$ misses $P_j$ with probability
\[
  (1-\alpha)^{|P_j|} \geq (1-\alpha)^m \geq \exp\bigl(-m(\alpha + \alpha^2)\bigr) \geq e^{-1}n^{-c},
\]
using $-\ln(1-\alpha) \leq \alpha + \alpha^2$ for $0 \leq \alpha \leq 1/2$ together with
\[
  m(\alpha + \alpha^2) \leq c\ln^2 n\Bigl(\frac{1}{\ln n} + \frac{1}{\ln^2 n}\Bigr) = c \ln n + c \leq c\ln n + 1 .
\]
These events are independent because the sets $P_j$ are pairwise disjoint, and $x_j$ is covered by $\bigcup_{i \leq m} g_i S$ exactly when $P_j$ meets $S$.
Hence
\[
  \Prob\Bigl[\bigcup_{i \leq m} g_i S = \Omega\Bigr]
  \leq \bigl(1 - e^{-1} n^{-c}\bigr)^r
  \leq \exp\Bigl(-\frac{n^{1-c}}{3 m^2}\Bigr),
\]
using $r \geq n/m^2$ and $e^{-1} > 1/3$ in the last step, and a union bound over the at most $|G|^m$ tuples bounds the probability that some $m$ translates cover by $\exp(m \ln|G| - n^{1-c}/(3m^2))$.
Since $m \leq \ln^2 n$, the exponent tends to $-\infty$ whenever $\ln^6 n \cdot \ln|G| = o(n^{1-c})$, which the hypothesis $\ln|G_i| = n_i^{o(1)}$ grants for every fixed $c < 1$.

Meanwhile the Chernoff bound gives $|S| \geq (1 - o(1)) \alpha n$ with probability tending to one.
The two high-probability events occur together, so for all large $n$ some $S$ has $\dim U_S = |S| - 1 \geq (1 - o(1)) n/\ln n$ and admits no covering by $m$ translates.
Hence $\diamp[G](V, U_S) > m$.
The trivial bound is at most $(\dim V)/(\dim U_S) + 1 \leq (1 + o(1)) \ln n$, and therefore
\begin{equation}
\label{eq:2trans-lower}
  \cc(V) \geq \rr(V, U_S) \geq \frac{m}{(1 + o(1)) \ln n} \geq c \bigl(1 - o(1)\bigr) \ln n .
\end{equation}
The constant $c$ has been fixed throughout this argument, and so has the family $(G_i, \Omega_i)$, so \eqref{eq:2trans-lower} says exactly that
\[
  \liminf_{i \to \infty} \frac{\cc(V_i)}{\ln n_i} \geq c .
\]
The left-hand side above does not involve $c$, and the estimate holds for every $c < 1$, so taking the supremum over such $c$ gives $\liminf_{i \to \infty} \cc(V_i)/\ln n_i \geq 1$.
Finally, since $\dim V_i = n_i - 1$ and \Cref{prop:cxupper} bounds $\cc(V_i)$ by $2 + \ln \dim V_i$, we conclude $\cc(V_i) = (1 + o(1)) \ln \dim V_i$, as desired.
\end{proof}

We record two concrete families to which the theorem applies.
The first is affine: a $2$-transitive group of affine transformations of a vector space of order $n = \ell^e$ is a subgroup of $\AGL_e(\F_\ell)$, so
\[
  |G| \leq n \, |\GL_e(\F_\ell)| < n \ell^{e^2} = n^{e+1},
\]
and $e \leq \log_2 n$ gives $\ln |G| = O(\ln^2 n)$.
The smallest instance is $\AGL_1(\F_\ell)$, sharply $2$-transitive on $n = \ell$ points with $|G| = \ell(\ell - 1) < n^2$.
The second is projective: the groups $\PSL_d(\F_q)$ act $2$-transitively on the $n = (q^d - 1)/(q - 1)$ points of $\mathbf{P}^{d-1}(\F_q)$, and here $|G| < q^{d^2}$ and $n \geq q^{d-1}$, so that
\[
  \ln|G| \leq d^2 \ln q \leq \frac{d^2}{d-1}\ln n \leq 2d \ln n = O(\ln^2 n).
\]
Along every such family $\cc(V_i) = (1 + o(1)) \ln \dim V_i$.

By the classification of the finite $2$-transitive groups \cite[Section 7.7]{DM96}, a consequence of the classification of finite simple groups, every such group is either affine or almost simple, with socle an alternating group in its natural action, a simple group of Lie type acting on a projective space or a related geometry, or one of finitely many sporadic examples.
For $\PSL_2(\F_p)$ the module in question is the Steinberg representation $W_p$, of dimension $p$.
Every irreducible representation $V$ of the complex group has $\cc_{\SL_2(\C)}(V) = 1$, while its finite version satisfies
\[
  \cc_{\PSL_2(\F_p)}(W_p) = (1 + o(1))\ln p,
\]
which is as complex as \Cref{prop:cxupper} permits.
The natural actions of the symmetric and alternating groups, of order $n^{n(1+o(1))}$, escape the hypothesis, and there the conclusion genuinely fails, since \Cref{cor:sn-std} gives $\cc_{S_n}(\C^{n-1}) = 1$.

\section{Bounded complexity}\label{sec:bounded}

In this section we give examples of connected groups where the covering complexity remains bounded along a growing family of representations.
The method follows the two-stage scheme of \Cref{subsec:expansion}.
The growth process of \Cref{lem:growth} produces a subspace of large dimension, and a structural property of the specific representation then covers the whole space with a bounded number of further translates.

\subsection{Reduction to large subspaces}\label{subsec:reduction}

\begin{proposition}\label{prop:reduction}
Let $H$ act irreducibly on $V$ as in \Cref{def:complexity}, either as a compact group acting unitarily or as a complex reductive group acting rationally, and suppose there are constants $0 \leq \theta < 1$ and $C_0 \geq 1$ such that
\[
  \dim W \geq \theta \dim V \quad \Longrightarrow \quad \diamp[H](V, W) \leq C_0
\]
for every nonzero subspace $W \leq V$. Then
\[
  \cc(V) \leq C_0 \max\Bigl\{1, \ \ln \frac{1}{1-\theta} + \theta\Bigr\}.
\]
\end{proposition}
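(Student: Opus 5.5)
The plan is to follow the two-stage scheme directly: grow $U$ with \Cref{lem:growth} until the sum of translates has density at least $\theta$, then finish with the hypothesis. By \Cref{lem:compact-algebraic} the diameters over a complex reductive group agree with those over a maximal compact subgroup, on which $V$ stays irreducible as in the proof of \Cref{cor:reductive}. So I may assume $H = K$ is compact and acts unitarily.

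Fix a nonzero $U \leq V$ and put $N = \dim V$, $u = \dim U$, $\alpha = u/N$. The argument splits into two cases.

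\emph{Case $u \geq \theta N$.} The hypothesis applies to $W = U$ directly. It gives $\diamp[K](V,U) \leq C_0$. Since $\lceil N/u \rceil \geq 1$, we get $\rr(V,U) \leq C_0$, which is covered by the $\max\{1,\cdot\}$. This case includes everything when $\theta = 0$.

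\emph{Case $u < \theta N$, so $\alpha < \theta$.} Run the growth process of \Cref{lem:growth} for
\[
  k = \Bigl\lceil \frac{\ln(1/(1-\theta))}{\alpha} \Bigr\rceil
\]
steps. If $k = 0$, which happens only when $\theta = 0$, the first case applies. Otherwise \eqref{eq:decay} gives $\dim W_k^\perp < N e^{-\alpha k} \leq (1-\theta)N$, so $\dim W_k > \theta N$. The hypothesis then yields $h_1,\dots,h_{C_0} \in K$ with $\sum_j \rho(h_j) W_k = V$ (I use $\lfloor C_0\rfloor$ translates if $C_0$ is not an integer). Each $\rho(h_j) W_k = \sum_{i \leq k} \rho(h_j g_i) \cdot U$ is again a sum of $k$ translates of $U$. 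Hence
\[
  \diamp[K](V,U) \leq C_0 k .
\]

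It remains to divide by the trivial bound. Using $k \leq \ln(1/(1-\theta))/\alpha + 1$ and $\lceil N/u \rceil \geq 1/\alpha$, I get
\[
  \rr(V,U) \leq \frac{C_0 k}{1/\alpha} \leq C_0\Bigl(\ln \frac{1}{1-\theta} + \alpha\Bigr) < C_0\Bigl(\ln\frac{1}{1-\theta} + \theta\Bigr).
\]
Taking the maximum over $U$ across both cases gives the stated bound on $\cc(V)$.

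The only delicate point is the boundary of the growth stage. It must stop exactly when the density reaches $\theta$, so that the hypothesis applies to $W_k$. The strict inequality in \eqref{eq:decay} handles this cleanly. The additive $+1$ coming from the ceiling in $k$ is what produces the extra $\theta$ term, and bounding it by $\alpha < \theta$ is where the case split is used.
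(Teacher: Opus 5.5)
Your proposal is correct and follows essentially the same route as the paper: you reduce to a maximal compact subgroup via \Cref{lem:compact-algebraic}, treat $\alpha \geq \theta$ directly, and otherwise run \Cref{lem:growth} for $\lceil \alpha^{-1}\ln(1-\theta)^{-1}\rceil$ steps. You then apply the hypothesis to the resulting sum and bound $\rr(V,U) \leq \alpha C_0 j < C_0\bigl(\ln\frac{1}{1-\theta} + \theta\bigr)$. Your extra remarks on $k \geq 1$ and on non-integer $C_0$ are harmless refinements of the same argument.
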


\begin{proof}
Suppose first that $H$ is a complex reductive group and let $K \leq H$ be a maximal compact subgroup.
Then $V$ is irreducible as a representation of $K$, as in the proof of \Cref{cor:reductive}, and $\diamp[H](V, W) = \diamp[K](V, W)$ for every $W \leq V$ by \Cref{lem:compact-algebraic}.
Both the hypothesis and the conclusion are therefore unchanged when $H$ is replaced by $K$, and we may assume from now on that $H$ is compact and acts unitarily.

Let $U \leq V$ be nonzero and put $\alpha = \dim U/\dim V$.
If $\alpha \geq \theta$ then the hypothesis applies to $U$ itself, and
\[
  \rr(V, U) = \frac{\diamp[H](V, U)}{\lceil 1/\alpha\rceil} \leq \alpha \diamp[H](V, U) \leq \alpha C_0 \leq C_0 .
\]
So assume $\alpha < \theta$ and run the growth process of \Cref{lem:growth} for $j = \lceil \alpha^{-1}\ln (1-\theta)^{-1}\rceil$ steps, which produces a sum $W_j$ of $j$ translates of $U$.
By the estimate \eqref{eq:decay},
\[
  \dim W_j^{\perp} < \dim V \cdot e^{-\alpha j} \leq (1-\theta)\dim V,
\]
so $\dim W_j > \theta \dim V$ and the hypothesis applies to $W_j$.
Write $m = \diamp[H](V, W_j) \leq C_0$.
Each of the $m$ translates of $W_j$ that span $V$ is itself a sum of $j$ translates of $U$, so $mj \leq C_0 j$ translates of $U$ suffice and
\[
  \rr(V, U) \leq \alpha \diamp[H](V, U) \leq \alpha C_0 j
  \leq C_0 \Bigl(\ln \frac{1}{1-\theta} + \alpha\Bigr)
  < C_0 \Bigl(\ln \frac{1}{1-\theta} + \theta\Bigr).
\]
Taking the larger of the two cases gives the stated maximum, as desired.
\end{proof}

The hypothesis of the proposition can be tested on Borel-stable subspaces alone.
The proof of \cite[Proposition 2.3]{JS25} gives the following slightly more explicit formulation.

\begin{lemma}\label{lem:borel}
Let $G$ be a connected linear algebraic group with a rational representation on $V$ and let $W \leq V$.
Then there is a subspace $W_0 \leq V$ stable under a Borel subgroup of $G$, with $\dim W_0 = \dim W$ and $\diamp[G](V, W) \leq \diamp[G](V, W_0)$.
\end{lemma}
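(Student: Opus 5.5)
We degenerate $W$ to a Borel-stable subspace inside the Grassmannian, in the same way as in the proof of \Cref{thm:sl2-transversal}. Put $d = \dim W$ and $m = \diamp[G](V, W)$, and assume $m$ is finite, since otherwise there is nothing to prove. The plan is to show that the set of points of $\mathrm{Gr}(d, V)$ whose diameter is at least $m$ is closed and $G$-stable. We then pass to the orbit closure of $[W]$ and apply the Borel fixed point theorem.

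The first step is semicontinuity. For $[U'] \in \mathrm{Gr}(d,V)$ and $g_1, \dots, g_{m-1} \in G$, the condition $g_1 U' + \dots + g_{m-1} U' \neq V$ says that a matrix built from local bases of the $g_i U'$ has less than full rank. On the product $\mathrm{Gr}(d,V) \times G^{m-1}$ this is a closed condition. So the set
\[
  \Sigma = \bigl\{ [U'] \bigm| g_1 U' + \dots + g_{m-1} U' \neq V \text{ for all } g_1, \dots, g_{m-1} \in G \bigr\}
\]
is an intersection of closed subsets of $\mathrm{Gr}(d,V)$, namely one fibre condition for each fixed tuple $(g_i)$. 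Hence $\Sigma$ is closed. It is also $G$-stable, because replacing $U'$ by $hU'$ just replaces the $g_i$ by $g_i h$. By the definition of $m$, the point $[W]$ lies in $\Sigma$, so the whole orbit closure $Y = \overline{G \cdot [W]}$ lies in $\Sigma$.

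The second step is the fixed point. The variety $Y$ is closed in the projective variety $\mathrm{Gr}(d, V)$, so it is projective. It is stable under $G$, and therefore under any Borel subgroup $B \leq G$. Since $B$ is connected and solvable, the Borel fixed point theorem \cite[Theorem III.10.4]{Bor91} produces a point $[W_0] \in Y$ fixed by $B$. This $W_0$ is a $B$-stable subspace with $\dim W_0 = d$. Because $[W_0] \in \Sigma$, no $m-1$ translates of $W_0$ span $V$, and so $\diamp[G](V, W_0) \geq m = \diamp[G](V, W)$.

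The only delicate point is the claim that $\Sigma$ is closed, since it is defined by a universal quantifier over $G^{m-1}$. Each tuple of group elements contributes a closed subset, and an arbitrary intersection of closed sets is closed, so this presents no real obstacle. No hypothesis on $V$ beyond rationality is used: if $W_0$ happens to have infinite diameter, the inequality still holds.
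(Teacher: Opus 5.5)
Your proof is correct and follows essentially the same route as the paper, which imports this lemma from \cite[Proposition 2.3]{JS25}. As in the proof of \Cref{thm:sl2-transversal}, the locus of subspaces of diameter at least $m$ is closed and $G$-stable in $\mathrm{Gr}(d,V)$, so it contains the orbit closure of $[W]$, and the Borel fixed point theorem then supplies a $B$-fixed point there.

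There is one slip. When $m = \infty$ there is still something to prove, namely that $W_0$ also has infinite diameter. Your argument covers this case unchanged once $m-1$ is replaced by $\dim V$, because a finite diameter never exceeds $\dim V$.
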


\subsection{Conjugation of \texorpdfstring{$\SL_n(\C)$}{SL(n,C)} on \texorpdfstring{$\slalg_n(\C)$}{sl(n,C)}}

We first apply the reduction to the conjugation representation of $\SL_n(\C)$ on its Lie algebra $\slalg_n(\C)$.
Here we rely on \cite[Proposition 5.2]{JS25}, which bounds the diameter of upper right block closed subspaces of $\slalg_n(\C)$ of dimension greater than $3 n^2/4 + n/2$ by $8$.
By \cite[Section 2.2.2]{JS25}, every Borel-stable subspace of $\slalg_n(\C)$ is upper right block closed, so the proposition applies to them.
The bound there is stated for conjugation by $\GL_n(\C)$, but scalars act trivially, so it is the same as the $\SL_n(\C)$ diameter.

\begin{proposition}\label{prop:cxadjoint}
For every $n \geq 2$ we have $\cc(\slalg_n(\C)) < 18$.
\end{proposition}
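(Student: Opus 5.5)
We plan to apply \Cref{prop:reduction} with $V = \slalg_n(\C)$, $N = n^2 - 1$, and constants $C_0 = 8$ and a threshold $\theta$ tuned to the bound of \cite[Proposition 5.2]{JS25}. That proposition handles upper right block closed subspaces of dimension greater than $3n^2/4 + n/2$. For every $n \geq 2$ we need a fixed $\theta < 1$ such that $\dim W \geq \theta N$ forces $\dim W > 3n^2/4 + n/2$.

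First, fix a nonzero $W$ with $\dim W \geq \theta N$. By \Cref{lem:borel}, there is a Borel-stable $W_0$ of the same dimension with $\diamp[\SL_n(\C)](V, W) \leq \diamp[\SL_n(\C)](V, W_0)$. By \cite[Section 2.2.2]{JS25}, $W_0$ is upper right block closed. Once $\dim W_0 > 3n^2/4 + n/2$, \cite[Proposition 5.2]{JS25} gives $\diamp[\SL_n(\C)](V, W_0) \leq 8$, since the $\GL_n$ and $\SL_n$ diameters agree because scalars act trivially. This verifies the hypothesis of \Cref{prop:reduction} with $C_0 = 8$.

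The main obstacle is the numerics: we need a single $\theta$ that works for all $n$ and gives a final constant below $18$. Note that $8 \max\{1, \ln\frac{1}{1-\theta} + \theta\} < 18$ requires $\ln\frac{1}{1-\theta} + \theta < 2.25$. For example, $\theta = 0.8$ gives $\ln 5 + 0.8 \approx 2.41$, which is too large, while $\theta = 0.78$ gives $\ln(1/0.22) + 0.78 \approx 1.514 + 0.78 = 2.294$, still slightly too large. So $\theta$ must sit near $0.77$ or below: $\theta = 0.77$ gives $1.470 + 0.77 = 2.24$, hence a bound of about $17.9 < 18$.

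Next we check the dimension condition $0.77(n^2 - 1) > 3n^2/4 + n/2$, equivalently $0.02 n^2 - 0.5 n - 0.77 > 0$. This holds only for $n \geq 27$ or so. For smaller $n$ the argument must be patched. I would try taking $\theta$ so close to $1$ that only $W$ of codimension zero qualifies when $n$ is small, but that inflates the logarithm. Instead, for $2 \leq n \leq 26$, I would bound $\cc$ directly via \Cref{prop:cxupper}: $\cc \leq 2 + \ln(n^2 - 1) \leq 2 + \ln 675 \approx 8.5 < 18$. For large $n$, the bound $\theta = 0.77$ then settles the claim, giving $\cc(\slalg_n(\C)) \leq 8 \cdot 2.24 < 18$. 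If the constant in \cite[Proposition 5.2]{JS25} forces a larger $\theta$, the same split between small and large $n$ still works, with the cutoff moved accordingly.
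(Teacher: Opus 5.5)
Your proposal is correct and is essentially the paper's proof. The paper likewise applies \Cref{prop:reduction} with $C_0 = 8$ for large $n$ and uses the bound of \Cref{prop:cxupper} for small $n$; the only difference is the numerics. It takes $\theta = 3/4 + 2.7\cdot 10^{-4}$ with cutoff $n > 1902$, so both halves come out near $17.1$. Your $\theta = 0.77$ with cutoff $n \geq 27$ gives $8 \cdot 2.2397 \approx 17.92$ and $2 + \ln 675 \approx 8.5$, which is equally valid.
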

\begin{proof}
Take $\theta = 3/4 + 2.7 \cdot 10^{-4}$.
Then
\[
\theta (n^2 - 1) > 3 n^2 / 4 + n/2
\quad
\text{ for all } n > 1902,
\]
so we can take $C_0 = 8$ in \Cref{prop:reduction} for all $n > 1902$, and the complexity is bounded by $C_0 \max\{1, \ln(1/(1-\theta)) + \theta\} < 18$.
For $2 \leq n \leq 1902$ we can use the trivial bound $\cc(\slalg_n(\C)) \leq 2 + \ln(n^2 - 1) \leq 2 + \ln(1902^2 - 1) < 18$, and the claim follows.
\end{proof}

\subsection{Plane curves and the representations \texorpdfstring{$\Sym^k \C^3$}{Sym\^{}k C\^{}3}}
We now carry out the criterion for $V = \Sym^k \C^3$.
Two inputs are needed.
The first is combinatorial and says that a large Borel-stable subspace contains a big block of monomials.
The second is a statement about plane curves with imposed multiplicities at given points.

Throughout, $E = \C^3$ with basis $e_1, e_2, e_3$, we take $B$ to be the upper triangular Borel subgroup of $\SL_3(\C)$, and we write $p = [e_1] \in \mathbf{P}(E)$ and $N = \dim \Sym^k E = \binom{k+2}{2}$.

\begin{lemma}\label{lem:block}
Let $W_0 \leq \Sym^k E$ be a nonzero $B$-stable subspace.
Then there is an integer $\kappa$ with $0 \leq \kappa \leq k$ and
\[
  W_0 \supseteq e_1^{\,k-\kappa}\Sym^{\kappa}E
  \qquad \text{and} \qquad
  \dim W_0 \leq (\kappa+1)(k+1).
\]
\end{lemma}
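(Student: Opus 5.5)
The plan is to use the combinatorics of $B$-stable subspaces of $\Sym^k E$. Write monomials as $e^a = e_1^{a_1}e_2^{a_2}e_3^{a_3}$ with $a_1+a_2+a_3 = k$.

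First, $W_0$ is stable under the diagonal torus of $B$. The torus weights of distinct monomials of degree $k$ are distinct, so $W_0$ is spanned by the monomials it contains; call this set of exponents $M$.

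Second, $W_0$ is stable under the raising operators $e_1\partial_{e_2}$, $e_2\partial_{e_3}$ and $e_1\partial_{e_3}$, which lie in the Lie algebra of $B$. Applied to a monomial, each of these returns a nonzero multiple of a monomial. Therefore $M$ is an upper set for the order generated by moving one unit of exponent from $e_3$ to $e_2$, or from $e_2$ to $e_1$.

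With this in hand, I take $\kappa$ to be the largest total degree of $e_2,e_3$ occurring in $M$, namely
\[
  \kappa = \max\{a_2 + a_3 \mid a \in M\}.
\]
This is well defined because $W_0 \neq 0$, and clearly $0 \leq \kappa \leq k$.

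For the containment, pick $a \in M$ with $a_2 + a_3 = \kappa$. Moving $e_3$-units to $e_2$ shows $e_1^{k-\kappa}e_2^{\kappa} \in M$. I then claim that every monomial $e_1^{k-\kappa}e_2^{b}e_3^{c}$ with $b+c=\kappa$ lies in $M$. This is where the upper-set property alone is insufficient: it only moves weight upward, and $e_2^\kappa$ sits at the top of the degree-$\kappa$ slice. So I must use that $W_0$ is a submodule for the full Borel group $B$ rather than only for its Lie algebra raising part.

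The main obstacle is exactly this step, and it depends on the convention for which monomials are highest. With $B$ upper triangular acting on $E$, we have $Be_1 = \C^\times e_1$ and $B$ sends $e_2 \mapsto \ast e_1 + \ast e_2$, $e_3 \mapsto \ast e_1 + \ast e_2 + \ast e_3$. Hence $B$-stability means $M$ is closed under moving units toward $e_1$, and the bottom element of a slice is $e_3^\kappa$.

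To fix this, I would choose $a \in M$ with $a_2 + a_3 = \kappa$ to be \emph{minimal}, in particular with largest $a_3$, and aim to show $e_1^{k-\kappa}e_3^{\kappa} \in M$. Then the whole slice $e_1^{k-\kappa}\Sym^\kappa\langle e_2,e_3\rangle$ follows by moving units from $e_3$ to $e_2$. If $e_3^\kappa$ is not attainable in this way, I would instead redefine $\kappa$ as the largest $j$ such that $e_1^{k-j}e_3^{j} \in M$. This set of $j$ is nonempty, since $e_1^k \in M$: apply the raising operators repeatedly to any element of $M$. Upward closure then gives every $e^a$ with $a_1 \geq k-\kappa$, because such a monomial dominates $e_1^{k-\kappa}e_3^{\kappa}$ when $a_2+a_3 \leq \kappa$. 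That is exactly the containment $e_1^{k-\kappa}\Sym^\kappa E \subseteq W_0$.

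For the dimension bound with this choice of $\kappa$: since $e_1^{k-\kappa-1}e_3^{\kappa+1} \notin M$, no monomial with $a_3 \geq \kappa+1$ lies in $M$, as any such monomial would move up to that one. So every $a \in M$ has $a_3 \leq \kappa$. For each value $a_3 \in \{0,\dots,\kappa\}$ there are at most $k+1$ choices of $a_2$, with $a_1$ then determined. This gives $\dim W_0 = |M| \leq (\kappa+1)(k+1)$, as required.
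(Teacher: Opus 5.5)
Your final argument is correct and is essentially the paper's proof. It uses the same reduction to a set $M$ of monomials closed under the three root moves. It uses the same $\kappa$: the largest $j$ with $e_1^{k-j}e_3^{j} \in M$ is just the largest $e_3$-exponent occurring in $M$, since moving all $e_2$-units to $e_1$ stays inside $M$. And it uses the same count of at most $k+1$ monomials for each $e_3$-exponent $0, \ldots, \kappa$.

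The only difference is the containment step. You get the whole block at once, because $e_1^{k-\kappa}e_3^{\kappa}$ is its least element in the order generated by your moves. The paper instead tracks the slice lengths $m_c$ through the non-increasing quantity $s_c = m_c + c$. Your route is shorter, and it is exactly what the paper's own closure property gives when applied to $(b,c) = (0,\kappa)$.

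The detour before that should be removed rather than kept as a fallback, because two things in it are wrong.

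\begin{enumerate}
\item The choice $\kappa = \max\{a_2 + a_3 \mid a \in M\}$ is not merely hard to prove, it is false. The subspace $W_0 = \langle e_1^{k-b}e_2^{b} \mid 0 \leq b \leq \kappa\rangle$ is $B$-stable with $\max\{a_2+a_3\} = \kappa$. But it contains no monomial involving $e_3$, so it misses $e_1^{k-\kappa}e_3^{\kappa}$ for $\kappa \geq 1$. Your repair via a minimal $a$ fails on the same example.
\item Your diagnosis that one needs the group $B$ rather than its Lie algebra is mistaken. $B$ is connected, so $B$-stability is equivalent to stability under the torus together with the three raising operators. That is precisely what your description of $M$ as a monomial upper set encodes, and your final argument indeed uses nothing more.
\end{enumerate}

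So define $\kappa$ as the largest $e_3$-exponent in $M$ from the outset and drop the conditional phrasing.
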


\begin{proof}
The diagonal torus $T \leq B$ acts semisimply, and its weight spaces in $\Sym^k E$ are the lines spanned by the monomials $e_1^{\,k-b-c}e_2^{\,b}e_3^{\,c}$.
These weights are pairwise distinct, so a $T$-stable subspace is the sum of some of these lines, and in particular
\[
  W_0 = \bigl\langle e_1^{\,k-b-c}e_2^{\,b}e_3^{\,c} \bigm| (b,c) \in \mathcal{A} \bigr\rangle
\]
for a set $\mathcal{A}$ of pairs.
Being $B$-stable, $W_0$ is also stable under the positive root vectors $e_1\partial/\partial e_2$, $e_1\partial/\partial e_3$ and $e_2\partial/\partial e_3$, which on the monomial indexed by $(b,c)$ make the moves
\[
  (b,c) \mapsto (b-1,c), \qquad
  (b,c) \mapsto (b,c-1), \qquad
  (b,c) \mapsto (b+1,c-1)
\]
whenever the relevant exponent is positive.
The first two moves decrease $b+c$ by one and the third decreases $c$ by one while fixing $b+c$, so composing them shows that $(b,c) \in \mathcal{A}$ implies $(b',c') \in \mathcal{A}$ for every $b', c' \geq 0$ with $c' \leq c$ and $b'+c' \leq b+c$.

Fix $c$ and consider the slice $\mathcal{A}_c = \{b \mid (b,c) \in \mathcal{A}\}$.
The first move shows that it is either empty or an initial segment $\{0, \ldots, m_c\}$.
If it is nonempty and $c > 0$, the third move applied to $(m_c, c)$ gives $(m_c + 1, c-1) \in \mathcal{A}$, so $\mathcal{A}_{c-1}$ is nonempty and $m_{c-1} \geq m_c + 1$.
Let $\kappa$ be the largest $c$ with $\mathcal{A}_c \neq \emptyset$, which exists because $W_0$ is nonzero, and which satisfies $\kappa \leq k$ because $c$ is an exponent in a monomial of degree $k$.
Only the slices $\mathcal{A}_0, \ldots, \mathcal{A}_\kappa$ can then be nonempty, each has at most $k+1$ elements, and the dimension bound follows.

It remains to see that the slices are long enough to contain a full triangle, and the point is that one quantity is monotone.
Put $s_c = m_c + c$.
The inequality $m_{c-1} \geq m_c + 1$ says exactly that $s_c$ is non-increasing in $c$, so $s_c \geq s_\kappa = m_\kappa + \kappa \geq \kappa$ for every $0 \leq c \leq \kappa$, and hence $m_c \geq \kappa - c$.
Therefore $\mathcal{A}$ contains every $(b,c)$ with $b + c \leq \kappa$, and the corresponding monomials are
\[
  e_1^{\,k-b-c}e_2^{\,b}e_3^{\,c}
  = e_1^{\,k-\kappa}\bigl(e_1^{\,\kappa-b-c}e_2^{\,b}e_3^{\,c}\bigr),
\]
which span exactly $e_1^{\,k-\kappa}\Sym^\kappa E$.
\end{proof}

Under the identification of $(\Sym^k E)^*$ with $\C[x_1,x_2,x_3]_k$ used in \Cref{prop:sym}, the pairing of $x^\beta$ with $e^\alpha$ vanishes unless $\alpha = \beta$.
The block consists of the monomials whose $e_1$-exponent is at least $k - \kappa$, so its annihilator is spanned by the dual monomials with $x_1$-exponent at most $k-\kappa-1$, equivalently with total $(x_2,x_3)$-degree at least $\kappa+1$.
In the affine chart $x_1 \neq 0$ around $p = [1:0:0]$ the multiplicity of a homogeneous form $F$ at $p$ is the least total $(x_2,x_3)$-degree occurring in $F(1,x_2,x_3)$, and therefore
\[
  \bigl(e_1^{\,k-\kappa}\Sym^\kappa E\bigr)^{\perp}
  = \langle x^\beta \mid \beta_1 \leq k-\kappa-1 \rangle
  = \{F \in \C[x_1,x_2,x_3]_k \mid \operatorname{mult}_p F \geq \kappa+1\}.
\]
The description is equivariant, in that the annihilator of a translate of the block is the space of degree $k$ forms vanishing to the same order at the corresponding translate of $p$.
So a translate $gW_0$ fails to span only through forms of multiplicity at least $\kappa+1$ at the point $gp$, and since $\SL_3(\C)$ is transitive on $\mathbf{P}^2$ we may put those points wherever we like, in particular in a grid.

\begin{lemma}\label{lem:grid}
Let $s \geq 2$ and $\rho \geq 1$ with $s\rho > k$, let $a_1, \ldots, a_s$ and $b_1, \ldots, b_s$ be distinct scalars, and let $\Gamma = \{(a_i, b_j)\} \subset \mathbf{A}^2 \subset \mathbf{P}^2$ be the resulting grid of $s^2$ points.
Then no nonzero $F \in \C[x_1,x_2,x_3]_k$ satisfies $\operatorname{mult}_q F \geq \rho$ for all $q \in \Gamma$.
\end{lemma}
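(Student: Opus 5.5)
Our plan is to restrict $F$ to the $s$ vertical lines $L_i = \{x = a_i\}$ of the grid (in affine coordinates $x = x_2/x_1$, $y = x_3/x_1$, so that $L_i$ is cut out by the linear form $x_2 - a_i x_1$). Then we peel off the factor $(x_2 - a_i x_1)$ repeatedly, using a Bézout-type count on each line.

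Suppose $F \neq 0$ has degree $k$ and multiplicity at least $\rho$ at every point of $\Gamma$. We argue by induction on $\rho$, with the statement being that no nonzero form of degree $d < s\rho$ has multiplicity at least $\rho$ at all grid points.

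First we restrict to a line. If $L_i$ is not a component of $F$, then $F|_{L_i}$ is a nonzero binary form of degree $d$. It vanishes to order at least $\rho$ at each of the $s$ points $(a_i, b_j)$, since the multiplicity of a restriction to a line is at least the multiplicity of the form. This forces $d \geq s\rho$, which is a contradiction. Hence every $L_i$ divides $F$, and since the $L_i$ are distinct lines, so does their product $\ell = \prod_i (x_2 - a_i x_1)$, which has degree $s$.

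Next we write $F = \ell F'$, where $\deg F' = d - s < s(\rho - 1)$. At each grid point $q$, exactly one factor of $\ell$ passes through $q$, because the $a_i$ are distinct. So $\operatorname{mult}_q \ell = 1$, and additivity of multiplicity under products gives $\operatorname{mult}_q F' \geq \rho - 1$.

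Finally we close the induction. If $\rho - 1 \geq 1$, the induction hypothesis applies to $F'$ and yields a contradiction. If $\rho = 1$, we have $d < s$, and the first step already shows that $\ell$, of degree $s > d$, divides $F \neq 0$, which is impossible. Either way no such nonzero $F$ exists. Taking $d = k$ proves the lemma.

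The only real point is the restriction step, and its input is the standard fact that $\operatorname{mult}_q(F|_L) \geq \operatorname{mult}_q F$ when $L$ is a line through $q$ not contained in $V(F)$. This follows by expanding $F$ in local coordinates at $q$ and substituting a parametrisation of $L$. Multiplicativity, $\operatorname{mult}_q(GH) = \operatorname{mult}_q G + \operatorname{mult}_q H$, holds because the lowest-degree homogeneous parts multiply to a nonzero product in a domain. The hypothesis $s \geq 2$ is not needed by this argument, so the induction handles all $s$ uniformly.
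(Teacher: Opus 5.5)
Your proof is correct and follows the paper's argument. You restrict to the grid lines, use the root count with $s\rho > d$ to force each line to divide the form, factor the lines out, and iterate on the residual multiplicity. The one difference is that you peel off only the $s$ vertical lines per step, lowering $\rho$ by one and the degree by $s$. The paper removes the vertical and horizontal lines together, lowering $\rho$ by two and the degree by $2s$, which forces a parity case split at the end that your induction avoids.
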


\begin{proof}
Write $F = x_1^tG$ with $x_1 \nmid G$ and put $f(x_2,x_3) = G(1,x_2,x_3)$, a nonzero polynomial of degree $d \leq k$.
Multiplication by a power of $x_1$, which is a unit on the chart $x_1 \neq 0$, changes no local multiplicity there, so $\operatorname{mult}_q f = \operatorname{mult}_q F \geq \rho$ at every $q \in \Gamma$, and $s\rho > k \geq d$.
It therefore suffices to show that no nonzero polynomial $f$ of degree $d$ has multiplicity at least $\rho$ at every point of $\Gamma$ when $s\rho > d$.
Suppose that such an $f$ exists.

Restrict $f$ to the vertical line $x_2 = a_i$.
Restricting a function lying in the $\rho$-th power of the maximal ideal at a point gives either zero or a function vanishing there to order at least $\rho$, so unless the restriction is zero it is a univariate polynomial of degree at most $d$ with $s$ roots each of multiplicity at least $\rho$.
That would give it at least $s\rho > d$ roots with multiplicity, so the restriction vanishes identically.
Hence $x_2 - a_i$ divides $f$ for every $i$, and the same argument on horizontal lines shows that $x_3 - b_j$ divides $f$ for every $j$.
As these $2s$ linear forms are pairwise coprime,
\[
  f = \prod_{i \leq s}(x_2-a_i) \prod_{j \leq s}(x_3-b_j) \cdot f_1
\]
with $f_1 \neq 0$ and $\deg f_1 \leq d - 2s$.
At a grid point exactly one vertical and one horizontal factor vanishes, each to order one, so removing their product lowers the multiplicity by exactly two and $\operatorname{mult}_q f_1 \geq \rho - 2$ for all $q \in \Gamma$.
Both sides of $s\rho > d$ drop by $2s$, so the strict inequality $s(\rho-2) > d - 2s$ persists.

The step may be repeated as long as the required residual multiplicity stays positive, and we finish by parity.
If $\rho = 2q$ is even, then after $q$ iterations we hold a nonzero polynomial of degree at most $d - 2sq < 0$, which is absurd.
If $\rho = 2q+1$ is odd, then after $q$ iterations we hold a nonzero $f_q$ of degree at most $d - 2sq < s$ vanishing at every grid point.
Restricting it to each vertical line gives $s$ distinct roots in the $x_3$-variable, so every such restriction vanishes and $\prod_i(x_2-a_i)$ divides $f_q$, impossible because that product has degree $s > \deg f_q$.
\end{proof}

We now have enough to run the criterion.

\begin{theorem}\label{thm:cxsym}
For every $k \geq 1$ and every nonzero subspace $U \leq \Sym^k \C^3$,
\[
  \diamp[\SL_3(\C)]\bigl(\Sym^k \C^3, U\bigr)
  \leq 9\ln 3 \cdot \frac{\dim \Sym^k \C^3}{\dim U} + 9,
\]
and hence $\cc(\Sym^k \C^3) < 16$.
\end{theorem}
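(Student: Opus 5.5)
The plan is to run the two-stage scheme of \Cref{subsec:reduction} with $\theta = 2/3$ and $C_0 = 9$. The completion step will come from a $3 \times 3$ grid in \Cref{lem:grid}, and the $\ln 3 = \ln\frac{1}{1-\theta}$ in the statement is the cost of the growth stage. Put $N = \binom{k+2}{2}$.

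\emph{Completion.} Let $W \leq \Sym^k \C^3$ with $\dim W \geq \tfrac{2}{3}N$; the claim is $\diamp[\SL_3(\C)](\Sym^k\C^3, W) \leq 9$.
\begin{enumerate}
\item By \Cref{lem:borel} it suffices to treat a $B$-stable $W_0$ of the same dimension.
\item \Cref{lem:block} gives $\kappa$ with $e_1^{k-\kappa}\Sym^\kappa E \subseteq W_0$ and $(\kappa+1)(k+1) \geq \dim W_0 \geq (k+1)(k+2)/3$. Hence $\kappa + 1 \geq (k+2)/3 > k/3$.
\item Apply \Cref{lem:grid} with $s = 3$ and $\rho = \kappa+1 \geq 1$, so that $s\rho > k$. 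Choose $g_1, \ldots, g_9 \in \SL_3(\C)$ moving $p$ to the nine points of a $3\times 3$ grid, which is possible by transitivity on $\mathbf{P}^2$.
\item By the equivariant description of the annihilator of the block before \Cref{lem:grid} and by \eqref{eq:annsum}, the annihilator of $\sum_i g_i e_1^{k-\kappa}\Sym^\kappa E$ consists of the degree $k$ forms of multiplicity at least $\kappa+1$ at all nine grid points.
\item \Cref{lem:grid} says this space is zero. So these nine translates of the block, and a fortiori the nine translates $g_iW_0$, span $V$.
\end{enumerate}

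\emph{Growth and counting.} Let $U$ be nonzero with $\alpha = \dim U/N$, and pass to the compact form via \Cref{lem:compact-algebraic}.
\begin{enumerate}
\item If $\alpha \geq 2/3$, completion gives at most $9$ translates.
\item Otherwise, run the growth process of \Cref{lem:growth} for $j = \lceil \ln 3/\alpha\rceil$ steps. By \eqref{eq:decay}, $\dim W_j^\perp < N/3$, so $W_j$ has density above $2/3$.
\item Nine translates of $W_j$ then span, using $9j \leq 9\ln 3\cdot N/\dim U + 9$ translates of $U$. This is the displayed diameter bound, and it also covers the first case.
\end{enumerate}

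\emph{Bounding the complexity.} A direct appeal to \Cref{prop:reduction} gives only about $9(\ln 3 + 2/3) \approx 15.9$, which is just under $16$. To be safe I will instead compute $\rr(V,U) = \diamp/\lceil 1/\alpha\rceil$ directly, splitting into cases.
\begin{enumerate}
\item For $\alpha \geq 2/3$: $\rr \leq 9\alpha \leq 9$, using the direct bound of $9$.
\item For $1/2 < \alpha < 2/3$: here $\lceil 1/\alpha\rceil = 2$ and $j = \lceil \ln 3/\alpha \rceil \leq 3$. So $\rr \leq 27/2 < 16$.
\item For $\alpha \leq 1/2$: $\rr \leq \alpha(9\ln 3/\alpha + 9) \leq 9\ln 3 + 4.5 < 16$.
\end{enumerate}

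\emph{Main obstacle.} I expect the completion step to be the delicate part, specifically checking that the density threshold $2/3$ forces $3(\kappa+1) > k$ uniformly in $k$, including small $k$. The inequality $(\kappa+1)(k+1) \geq (k+1)(k+2)/3$ handles all $k$ at once. The remaining concern is bookkeeping: the passage from $W$ to $W_0$ preserves dimension, and \Cref{lem:borel} bounds the diameter of $W$ by that of $W_0$.
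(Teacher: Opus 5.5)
Your proposal is correct and follows the paper's proof essentially step for step. It uses the completion with $\theta = 2/3$, $C_0 = 9$ via \Cref{lem:borel}, \Cref{lem:block} and a $3 \times 3$ grid in \Cref{lem:grid}, followed by $j = \lceil \alpha^{-1}\ln 3 \rceil$ growth steps. The paper also notes that the Borel subgroup produced by \Cref{lem:borel} must first be conjugated to the fixed upper triangular $B$, at the cost of one overall translate, a step you gloss over.

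The only divergence is in the last step. The paper simply invokes \Cref{prop:reduction} to get $9(\ln 3 + 2/3) \approx 15.89 < 16$, which is already rigorous, so the detour was not needed for safety. Your direct case split in fact gives the slightly sharper bound $9\ln 3 + 9/2 \approx 14.39$.
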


\begin{proof}
We verify the hypothesis of \Cref{prop:reduction} with $\theta = 2/3$ and $C_0 = 9$, so let $W \leq \Sym^k E$ have $\dim W \geq 2N/3$.
\Cref{lem:borel} produces a subspace $W_0$ of the same dimension, stable under some Borel subgroup $B'$, with $\diamp[\SL_3](\Sym^k E, W) \leq \diamp[\SL_3](\Sym^k E, W_0)$.
All Borel subgroups of $\SL_3$ are conjugate, and replacing $W_0$ by one overall translate changes neither its dimension nor its diameter, so we may assume that $W_0$ is stable under the fixed upper triangular $B$ used above.

Let $\kappa$ be as in \Cref{lem:block} and put $\rho = \kappa + 1$.
From $(\kappa+1)(k+1) \geq \dim W_0 \geq 2N/3 = (k+1)(k+2)/3$ we get
\[
  3\rho \geq k+2 > k.
\]
Choose a $3 \times 3$ grid $\Gamma \subset \mathbf{A}^2 \subset \mathbf{P}^2$ and, using transitivity of $\SL_3(\C)$ on $\mathbf{P}^2$, elements $g_1, \ldots, g_9$ with $g_ip$ running once over $\Gamma$.
Write $L_\kappa = e_1^{\,k-\kappa}\Sym^\kappa E$, so that $L_\kappa \subseteq W_0$ by \Cref{lem:block} and therefore
\begin{align*}
  \Bigl(\sum_{i \leq 9}g_iW_0\Bigr)^{\perp}
  &\subseteq \Bigl(\sum_{i \leq 9}g_iL_\kappa\Bigr)^{\perp}
  = \bigcap_{i \leq 9}(g_iL_\kappa)^{\perp} \\
  &= \bigl\{F \in \C[x_1,x_2,x_3]_k \bigm| \operatorname{mult}_{g_ip}F \geq \rho \text{ for all } i\bigr\}.
\end{align*}
This vanishes by \Cref{lem:grid} applied with $s = 3$, so the nine displayed translates of $W_0$ span, and \Cref{lem:borel} carries the bound back to $W$.
Every subspace of dimension at least $2N/3$ therefore has diameter at most $9$.

\Cref{prop:reduction} now gives
\[
  \cc(V) \leq 9\Bigl(\ln 3 + \frac{2}{3}\Bigr) = 15.887\ldots < 16 .
\]
For the diameter itself, put $\alpha = \dim U/N$.
If $\alpha < 2/3$, the growth process of \Cref{lem:growth} run for $j = \lceil \alpha^{-1}\ln 3\rceil$ steps produces a subspace of dimension greater than $2N/3$, nine translates of which span $V$, at a total cost of at most
\[
  9j \leq \frac{9\ln 3}{\alpha} + 9 .
\]
If $\alpha \geq 2/3$ then nine translates of $U$ span outright and the same bound holds trivially.
Substituting $\alpha = \dim U/N$ gives the stated estimate, as desired.
\end{proof}

By \Cref{prop:sym} with $n = 3$ the complexity is at least $2(k+1)/(k+3)$, so along this family it is asymptotically at least $2$ and always less than $16$.
Applying the outer automorphism of $\SL_3(\C)$ gives the same bounds for the representations $V(0,k)$.

\begin{remark}
The choice $\theta = 2/3$ is not arbitrary.
A grid of $s^2$ points costs $C_0 = s^2$ translates, and \Cref{lem:grid} needs $s\rho > k$ while \Cref{lem:block} only guarantees $\rho \geq \theta(k+2)/2$, so the two are compatible for large $k$ precisely when $s \geq 2/\theta$.
\Cref{prop:reduction} then gives the bound $s^2\bigl(\ln(1-\theta)^{-1} + \theta\bigr)$, which increases with $\theta$, so for each $s$ the best admissible choice is $\theta = 2/s$.
Now $s = 2$ would force $\theta \geq 1$ and is excluded, $s = 3$ with $\theta = 2/3$ gives $9(\ln 3 + 2/3) = 15.887\ldots$, and $s = 4$ with $\theta = 1/2$ already gives $16(\ln 2 + 1/2) = 19.09\ldots$, with larger grids worse still.
\end{remark}

\subsection{Questions}

We have bounded the complexity of the representations $\Sym^k \C^3$ of $\SL_3(\C)$.
We have not been able to extend the argument to the other irreducible representations $V(a,b)$ of $\SL_3(\C)$.
The same method would work provided one could show that large Borel-stable subspaces of $V(a,b)$ have bounded diameter.

\begin{question}\label{q:missing}
Are there $\theta < 1$ and $C_0$ such that for every pair $(a,b)$ and every Borel-stable subspace $W_0 \leq V(a,b)$ with $\dim W_0 \geq \theta \dim V(a,b)$ one has $\diamp[\SL_3(\C)](V(a,b), W_0) \leq C_0$?
\end{question}

Beyond $\SL_3(\C)$, \Cref{cor:flagbound} relates the complexity of representations of a reductive group to the dimension of its flag variety.
All examples we have explored are consistent with a positive answer to the following question.

\begin{question}\label{q:fixedgroup}
Let $G$ be a connected reductive group over $\C$.
Is $\cc(V)$ bounded over all irreducible rational representations $V$ of $G$?
If so, can one bound it by $\max ( 1, \dim G/B )$?
\end{question}

\section{Monomial diameters in the Lie algebra}\label{sec:lie}

In this section we consider the same problem as before, but with a Lie algebra $\lalg$ acting on $V$ instead of a group.
We adapt the definitions and questions from \cite[Section 6]{JS25}.
Let $\lalg$ be a Lie algebra with a representation $\rho$ on a finite-dimensional complex vector space $V$, and let $\mon(\rho(\lalg))$ be the set of \emph{monomials}
\[
  \rho(x_1)\rho(x_2)\cdots\rho(x_r),
  \qquad x_1, \ldots, x_r \in \lalg, \quad r \geq 0,
\]
the identity operator being the empty product.
The \emph{monomial Lie-additive diameter} of $V$ with respect to a nonzero $U \leq V$ is
\[
  \diamp[\lalg,\mon](V, U)
  = \min\bigl\{d \mid m_1 \cdot U + \cdots + m_d \cdot U = V
  \text{ for some } m_i \in \mon(\rho(\lalg))\bigr\},
\]
the least number of monomial translates of $U$ whose sum is all of $V$.
Since $\dim m_i \cdot U \leq \dim U$, the trivial lower bound of \Cref{sec:intro} applies here as well,
\[
  \diamp[\lalg,\mon](V, U) \geq \Bigl\lceil \frac{\dim V}{\dim U} \Bigr\rceil.
\]
In \cite[Question 6.6]{JS25} we asked whether the monomial Lie-additive diameter is always equal to the trivial lower bound.
Here we offer a new family of examples where the answer is positive.
These examples are also in line with \cite[Question 6.7]{JS25}, which asks whether the monomial Lie-additive diameter is always at most the group-additive diameter.

\subsection{Symmetric powers of a plane in three variables}

The group-additive diameter of the pair $(\Sym^k \C^3, \Sym^k X)$ with $X$ a plane in $\C^3$ is $k+1$ by \Cref{prop:sym}, while the trivial bound is $\lceil (k+2)/2\rceil$. What makes the situation interesting is that the trivial bound is attained in the Lie algebra.

\begin{theorem}\label{thm:liesym}
Let $\slalg_3(\C)$ act on $V = \Sym^k \C^3$ and let $U = \Sym^k X$ for a plane $X \leq \C^3$. Then
\[
  \diamp[\slalg_3(\C),\mon](V, U) = \Bigl\lceil \frac{k+2}{2} \Bigr\rceil
\]
for every $k \geq 1$, and in a basis with $X = \langle e_1, e_2\rangle$ the bound is attained by monomials whose factors are all of the form $\alpha E_{21} + \beta E_{32}$.
\end{theorem}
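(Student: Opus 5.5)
The lower bound $\diamp[\slalg_3(\C),\mon](V,U) \geq \lceil (k+2)/2\rceil$ is the trivial one, since $\dim V/\dim U = \binom{k+2}{2}/(k+1) = (k+2)/2$. So the work is the upper bound: exhibit $\lceil (k+2)/2\rceil$ monomials in lowering operators $\alpha E_{21}+\beta E_{32}$ whose translates of $U$ span $V$.

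\emph{Setup.} Work in the monomial basis $e_1^{a}e_2^{b}e_3^{c}$ of $\Sym^k E$, so that $U = \Sym^k X$ is the \emph{row} $c = 0$. On $\Sym^k E$ the operator $E_{21}$ acts as the derivation $e_2\partial/\partial e_1$ and $E_{32}$ as $e_3\partial/\partial e_2$. Organise $V$ into rows $R_c = \langle e_1^{a}e_2^{b}e_3^{c}\mid a+b = k-c\rangle$ of dimension $k+1-c$ for $0 \leq c \leq k$.

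\emph{Pairing the rows.} Row $R_0$ is $U$ itself, obtained from the empty monomial. The remaining rows pair up as $R_c \oplus R_{k+1-c}$ for $1 \leq c \leq \lceil k/2\rceil$; each pair has dimension exactly $k+1 = \dim U$. When $k$ is even the middle row $R_{(k+1)/2}$ does not occur and the last pair is a single row. This gives $1 + \lceil k/2\rceil = \lceil (k+2)/2\rceil$ blocks, and the plan is to find for each $c$ a monomial $m_c$ such that $m_c U$ fills the block $R_c \oplus R_{k+1-c}$ modulo what has already been produced.

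\emph{Candidate monomial.} A natural candidate is
\[
  m_c = E_{32}^{\,c}\,(E_{21}+\beta E_{32})^{\,k+1-2c}\,E_{32}^{\,?}\cdots,
\]
that is, a word in which $E_{32}^{c}$ moves the basis vectors $e_1^{k-b}e_2^{b}$ with $b \geq c$ into $R_c$, bijectively onto the $k+1-c$ monomials there. The low-$b$ vectors ($b < c$), which $E_{32}^{c}$ kills, must instead be sent by extra factors into $R_{k+1-c}$. Since a single monomial must do both at once, I would use a product of linear combinations $\alpha E_{21}+\beta E_{32}$ and expand it.

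\emph{Triangularity and the expected obstacle.} Every factor lowers the height $b+2c$ by one. I would therefore order the basis of $V$ by (row, height) and show that the matrix of $\sum_c m_c U$ against this basis is block triangular with invertible diagonal blocks. The blocks are indexed by the pairs of rows, and the off-diagonal contributions land in rows treated by other monomials. This step is the main obstacle: one must choose the exponents and scalars $\alpha,\beta$ so that the leading coefficients, which are products of falling factorials from the derivations, are nonzero and the diagonal blocks have full rank $k+1$.

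I would first check small $k$ ($k = 1,2,3$) by hand to pin down the pattern of the words. I would then argue generically: the rank condition is Zariski open in the scalars, so it suffices to exhibit one specialisation where the leading-term matrix is triangular with nonzero diagonal.
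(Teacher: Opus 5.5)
Your lower bound is correct. The upper bound has a genuine gap, and the missing monomials $m_c$ (your candidate still contains a question mark) are only part of it: the block structure you want them to realize cannot exist.

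Grade $e_1^ae_2^be_3^c$ by $\theta = a-c$. Your height is $b+2c = k-\theta$, and each factor $\alpha E_{21}+\beta E_{32}$ \emph{raises} it by one. So a monomial $m$ of length $L$ in such factors is homogeneous of degree $-L$. Since $U$ has exactly one basis vector $e_1^ae_2^{k-a}$ in each grade $0 \le a \le k$, the image $mU$ is at most one-dimensional in every grade and lives in the window of grades $[-L, k-L]$.

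The row $R_c$ occupies grades $-c, \ldots, k-2c$, and $R_{k+1-c}$ occupies grades $c-k-1, \ldots, 2c-k-2$, one dimension each. Coordinate projection onto the block is graded. So an invertible diagonal block for $m_c$ on $R_c \oplus R_{k+1-c}$ forces:
\begin{itemize}
\item $L \le 2c$, to reach $e_1^{k-c}e_3^c$, and $L \ge k+1-c$, to reach $e_2^{c-1}e_3^{k+1-c}$, hence $3c \ge k+1$;
\item the two grade ranges to be disjoint, since otherwise the block is two-dimensional in some grade, hence $3c \le k+1$.
\end{itemize}
Only $c = (k+1)/3$ survives. Already for $k = 3$, the block $R_1 \oplus R_3$ spreads over the five grades $-3, \ldots, 1$, while any $mU$ meets only four consecutive grades. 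No choice of exponents or scalars rescues the triangularity step, so the closing genericity argument has no working specialization to start from.

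Separately, your parity remark is reversed. The middle row $R_{(k+1)/2}$ exists when $k$ is odd, and that last block has dimension $(k+1)/2$, not $k+1$.

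The missing idea is to use the grading as a splitting rather than an ordering. Because every monomial is homogeneous, $\sum_i m_iU = V$ decouples into $2k+1$ separate spanning conditions, one in each graded piece $V^{(\theta)}$. In that piece, $m_iU$ contributes the single vector $m_i\xi_{\theta+L_i}$, where $\xi_a = e_1^ae_2^{k-a}$ and $L_i$ is the length of $m_i$. The paper proceeds as follows:
\begin{itemize}
\item It takes $m_i$ of length $2i$ for $0 \le i < \lceil (k+2)/2\rceil$.
\item It checks that at least $\dim V^{(\theta)}$ of these monomials contribute to each grade.
\item It shows that, as the parameters vary, $m_i\xi_{\theta+2i}$ spans exactly the monomials of $V^{(\theta)}$ with $e_3$-exponent at most $2i$. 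These form a flag that grows fast enough.
\item It fills each grade greedily along that flag.
\item It intersects the $2k+1$ resulting nonempty Zariski open subsets of the affine parameter space to satisfy all grades at once.
\end{itemize}
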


Put $d = \lceil (k+2)/2\rceil$ for the rest of the subsection.
Since $\dim V = \binom{k+2}{2}$ and $\dim U = k+1$, the trivial bound is $\lceil (k+2)/2 \rceil = d$, and this is the lower bound of the theorem.
The upper bound occupies the rest of the subsection, and the plan is the following.
A grading splits the problem into $2k+1$ separate spanning conditions, one for each graded piece.
We count the monomials available in each piece (\Cref{lem:liecount}), we compute exactly which vectors a single monomial can reach there (\Cref{lem:liereach}), and we then fill each piece greedily along a flag (\Cref{lem:flag}) before making the finitely many choices compatible.

\emph{The setup.}
It is enough to treat the coordinate plane.
Indeed, $\SL_3(\C)$ is transitive on the planes in $\C^3$, so some $g \in \SL_3(\C)$ carries $\langle e_1, e_2\rangle$ to $X$, and conjugation by $\rho(g)$ sends each factor $\rho(x)$ to $\rho(\Ad(g)x)$ and therefore sends monomials to monomials of the same length.
The monomial diameter is thus unchanged when $U$ is replaced by $\rho(g)U$.
So identify $V$ with $\C[e_1,e_2,e_3]_k$ in such a way that $\rho(E_{ij}) = e_i \partial/\partial e_j$, and take $X = \langle e_1, e_2 \rangle$, so that $U = \C[e_1,e_2]_k$.
Let
\[
  \aalg = \langle E_{21}, E_{32}\rangle \leq \slalg_3(\C),
\]
a two-dimensional vector subspace, not a Lie subalgebra, whose role is that it is the degree $-1$ piece for the grading introduced next.
Let $m_0$ be the identity and, for $1 \leq i \leq d-1$, let
\[
  m_i = \rho(y_{i1})\rho(y_{i2})\cdots\rho(y_{i,2i}),
  \qquad y_{il} \in \aalg .
\]
What has to be shown is that the parameters $y_{il}$ can be chosen so that $m_0 \cdot U + \cdots + m_{d-1} \cdot U = V$.

Let the one-parameter subgroup $t \mapsto \diag(t, 1, t^{-1})$ act on $V$ and write $V = \bigoplus_{\theta = -k}^{k}V^{(\theta)}$, so that $V^{(\theta)}$ is spanned by the monomials $e_1^ae_2^be_3^c$ with $a - c = \theta$.
Putting
\[
  v_{\theta,c} = e_1^{\,\theta+c}e_2^{\,k-\theta-2c}e_3^{\,c},
  \qquad
  \mu = \max(0,-\theta),
  \qquad
  \nu = \Bigl\lfloor \frac{k-\theta}{2} \Bigr\rfloor,
\]
the monomials of $V^{(\theta)}$ are exactly the $v_{\theta,c}$ with $\mu \leq c \leq \nu$, and hence $\dim V^{(\theta)} = \nu - \mu + 1$.
Both $E_{21}$ and $E_{32}$ have degree $-1$ for this grading, since $E_{21}$ replaces one copy of $e_1$ by $e_2$ while $E_{32}$ replaces one copy of $e_2$ by $e_3$, and either operation lowers $a - c$ by one.
Each $m_i$ is therefore homogeneous of degree $-2i$ and carries $V^{(\theta')}$ into $V^{(\theta'-2i)}$.
The subspace $U$ is spanned by the monomials $\xi_a = e_1^ae_2^{\,k-a}$ with $0 \leq a \leq k$, and $\xi_a \in V^{(a)}$, so $U$ meets every graded piece in at most one basis vector.
Write $I(\theta)$ for the set of indices $i$ with $0 \leq i \leq d-1$ and $0 \leq \theta+2i \leq k$, which are those contributing to the grade $\theta$, and for $i \in I(\theta)$ put
\begin{equation}\label{eq:reachable}
  S_i(\theta) = \bigl\langle v_{\theta,c} \bigm| \mu \leq c \leq \min(2i, \nu)\bigr\rangle .
\end{equation}
Everything in sight is graded, and therefore
\begin{equation}\label{eq:gradedsplit}
  \Bigl(\sum_{i < d} m_i \cdot U\Bigr) \cap V^{(\theta)}
  = \bigl\langle m_i \xi_{\theta+2i} \bigm| i \in I(\theta) \bigr\rangle .
\end{equation}
So spanning $V$ amounts to $2k+1$ separate spanning conditions, one for each $V^{(\theta)}$.
The conditions are not independent, since they involve the same parameters, and the genericity argument at the end is what makes them hold at once.

The first input is a count of how many monomials reach a given grade.

\begin{lemma}\label{lem:liecount}
For every $-k \leq \theta \leq k$ we have $I(\theta) = \{\lceil \mu/2\rceil, \ldots, \min(\nu, d-1)\}$ and
\[
  |I(\theta)| \geq \dim V^{(\theta)} .
\]
\end{lemma}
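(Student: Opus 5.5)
The plan is a direct computation: first identify $I(\theta)$ by unwinding its two defining inequalities, then compare the length of the resulting interval with $\dim V^{(\theta)} = \nu - \mu + 1$ in two cases. The only arithmetic input is the identity $d - 1 = \lceil k/2 \rceil$, which follows from $d = \lceil (k+2)/2\rceil = \lfloor k/2 \rfloor + 1$ when $k$ is even and $d = (k+3)/2$ when $k$ is odd.

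\emph{Description of $I(\theta)$.} An index $i$ lies in $I(\theta)$ exactly when $i \geq 0$, $i \leq d-1$, $\theta + 2i \geq 0$ and $\theta + 2i \leq k$.
\begin{itemize}
\item[(i)] The two lower constraints combine as follows. For $\theta \geq 0$, the condition $\theta + 2i \geq 0$ is automatic given $i \geq 0$. For $\theta < 0$, it says $i \geq -\theta/2 = \mu/2$. Since $i$ is an integer, both cases read $i \geq \lceil \mu/2 \rceil$.
\item[(ii)] The upper constraint $2i \leq k - \theta$ is equivalent, for integer $i$, to $i \leq \lfloor (k-\theta)/2 \rfloor = \nu$.
\end{itemize}
Together with $i \leq d-1$, this gives $I(\theta) = \{\lceil \mu/2\rceil, \ldots, \min(\nu, d-1)\}$. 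The set may a priori be empty, but the inequality proved next shows that it is not.

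\emph{The count when $\nu \leq d-1$.} Here
\[
|I(\theta)| = \nu - \lceil \mu/2 \rceil + 1 \geq \nu - \mu + 1,
\]
because $\lceil \mu/2 \rceil \leq \mu$ for every integer $\mu \geq 0$.

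\emph{The count when $\nu > d-1$.} Here $|I(\theta)| = d - \lceil \mu/2 \rceil$. The required inequality $d - \lceil \mu/2 \rceil \geq \nu - \mu + 1$ is equivalent, using $\mu - \lceil \mu/2 \rceil = \lfloor \mu/2 \rfloor$, to
\[
\nu - \lfloor \mu/2 \rfloor \leq d - 1 = \lceil k/2 \rceil .
\]
\begin{itemize}
\item[(a)] If $\theta \geq 0$, then $\mu = 0$ and $\nu = \lfloor (k-\theta)/2\rfloor \leq \lfloor k/2 \rfloor \leq \lceil k/2 \rceil$.
\item[(b)] If $\theta < 0$, then $\mu = -\theta$ and $\nu = \lfloor (k+\mu)/2 \rfloor$. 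For $k$ even, $\lfloor (k+\mu)/2\rfloor = k/2 + \lfloor \mu/2\rfloor$. For $k$ odd, $\lfloor (k+\mu)/2 \rfloor \leq \lfloor k/2\rfloor + \lfloor \mu/2 \rfloor + 1 = \lceil k/2\rceil + \lfloor \mu/2\rfloor$. In both parities $\nu - \lfloor \mu/2 \rfloor \leq \lceil k/2 \rceil$.
\end{itemize}

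The only delicate point is this last parity bookkeeping, in particular the case $k$ odd, where $d - 1$ exceeds $\lfloor k/2 \rfloor$ by exactly the one unit that absorbs the rounding in $\lfloor (k+\mu)/2 \rfloor$. I would double-check it on small cases such as $k = 1$ and $\theta = -1$, where $\mu = \nu = 1$, $d = 2$ and $I(\theta) = \{1\}$, giving $|I(\theta)| = 1 = \dim V^{(\theta)}$.
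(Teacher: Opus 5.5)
Your proof is correct and follows the paper's argument essentially step for step: you unwind the defining inequalities, reduce the count to $\nu - \lfloor \mu/2 \rfloor \leq \min(\nu, d-1)$ with $d - 1 = \lceil k/2 \rceil$, and settle the case $\nu > d-1$ by the parity inequality $\lfloor (k+\mu)/2 \rfloor \leq \lceil k/2 \rceil + \lfloor \mu/2 \rfloor$. The one cosmetic difference is that the paper observes that $\nu > d-1$ already forces $\theta < 0$, so your subcase (a) is vacuous and needs no separate check.
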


\begin{proof}
The description of $I(\theta)$ is what $0 \leq \theta + 2i \leq k$ and $0 \leq i \leq d-1$ say.
Put $i_{\min} = \lceil \mu/2\rceil$ and $i_{\max} = \min(\nu, d-1)$, so that the claim reads $i_{\max} - i_{\min} + 1 \geq \nu - \mu + 1$.
Since $\mu - \lceil \mu/2\rceil = \lfloor \mu/2 \rfloor$, this is the same as $\min(\nu, d-1) \geq \nu - \lfloor \mu/2 \rfloor$.
If $\nu \leq d-1$ it is clear.
Otherwise $\nu > d - 1 = \lceil k/2\rceil$, which forces $\theta < 0$ and hence $\mu = -\theta$, and the parity inequality
\[
  \nu = \Bigl\lfloor \frac{k+\mu}{2} \Bigr\rfloor
  \leq \Bigl\lceil \frac{k}{2} \Bigr\rceil + \Bigl\lfloor \frac{\mu}{2} \Bigr\rfloor
\]
gives the claim.
\end{proof}

The second input identifies exactly what one monomial can reach as its parameters vary.
The subspace \eqref{eq:reachable} grows with $i$, so a longer monomial reaches further into the graded piece.

\begin{lemma}\label{lem:liereach}
Fix $-k \leq \theta \leq k$ and $i \in I(\theta)$.
As the parameters $y_{i1}, \ldots, y_{i,2i}$ range over $\aalg$, the vectors $m_i\xi_{\theta+2i}$ span $S_i(\theta)$.
\end{lemma}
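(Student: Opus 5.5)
Write $\xi = \xi_{\theta+2i} = e_1^{\theta+2i}e_2^{k-\theta-2i}$ and expand the product multilinearly. Each factor is $\rho(y) = \alpha\, e_2\partial_{e_1} + \beta\, e_3\partial_{e_2}$ with $\alpha, \beta \in \C$. So $m_i\xi$ is a sum over words $w \in \{A, B\}^{2i}$, where $A = e_2\partial_{e_1}$ and $B = e_3\partial_{e_2}$, with coefficient $\prod_l(\alpha_l \text{ or } \beta_l)$ according to the letters. A word with $c$ letters $B$ and $2i-c$ letters $A$ sends $\xi$ to a scalar multiple of $e_1^{\theta+2i-(2i-c)}e_2^{\,k-\theta-2c}e_3^{\,c} = v_{\theta,c}$. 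Hence $m_i\xi \in \langle v_{\theta,c} \mid 0 \leq c \leq 2i\rangle \cap V^{(\theta)}$, which is exactly $S_i(\theta)$: the monomial exists only when $\mu \leq c \leq \nu$, since the exponents must be nonnegative. This gives the inclusion of the span in $S_i(\theta)$.

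For the reverse inclusion, I group terms by $c$. The coefficient of $v_{\theta,c}$ in $m_i\xi$ is a polynomial in the $4i$ parameters $(\alpha_l,\beta_l)$. It equals a sum over the subsets $L \subseteq \{1,\dots,2i\}$ of size $c$ (the positions of $B$) of $\prod_{l\in L}\beta_l\prod_{l\notin L}\alpha_l$, times the scalar $\lambda_L$ obtained by applying that word to $\xi$. These coefficient polynomials for distinct $c$ are multihomogeneous of different bidegree in $(\alpha,\beta)$: degree $c$ in the $\beta$'s and $2i-c$ in the $\alpha$'s. Linearly independent polynomials $P_c$ yield vectors $\sum_c P_c(\text{param})\,v_{\theta,c}$ spanning the space of the $v_{\theta,c}$ for which $P_c \neq 0$. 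If the span were a proper subspace, a nonzero functional $\sum \phi_c v_{\theta,c}^*$ would kill all of them, giving $\sum_c\phi_cP_c \equiv 0$ and contradicting independence. So it suffices to show that $P_c \not\equiv 0$ for every $\mu \leq c \leq \min(2i,\nu)$, and independence follows from the distinct bidegrees.

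To see that $P_c \neq 0$, I exhibit one word with nonzero action and note that distinct subsets $L$ give distinct monomials in the parameters, so there is no cancellation. The natural choice is the word that applies all $2i-c$ copies of $A$ first (rightmost) and then the $c$ copies of $B$. The $A$'s act on $e_1^{\theta+2i}$, and are nonzero as long as $2i - c \leq \theta+2i$, i.e.\ $c \geq -\theta$, which holds since $c \geq \mu$. After them the $e_2$-exponent is $k-\theta-2i+(2i-c) = k-\theta-c$. The $B$'s then need $c \leq k-\theta-c$, i.e.\ $2c \leq k-\theta$, which holds since $c \leq \nu$. Every step multiplies by a positive integer exponent, so $\lambda_L > 0$; in fact every $\lambda_L \geq 0$, so positivity alone already rules out cancellation. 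The only step needing care is this bookkeeping of exponents: checking that the window $\mu \leq c \leq \min(2i,\nu)$ is exactly where some word survives, and that the case $i = 0$ (identity, $c = 0 = \mu$ since $\theta \geq 0$ there) is consistent.
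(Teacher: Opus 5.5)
Your proof is correct and follows essentially the paper's route: you expand the monomial multilinearly into words in $A = \rho(E_{21})$ and $B = \rho(E_{32})$, use that distinct words carry distinct parameter monomials (so there is no cancellation), and certify each $v_{\theta,c}$ with $\mu \leq c \leq \min(2i,\nu)$ via the word $B^{c}A^{2i-c}$. The only cosmetic difference is that you group the words by their number of letters $B$ and argue through the distinct bidegrees of the resulting polynomials $P_c$, whereas the paper shows directly that a functional vanishing on all achievable vectors vanishes on every word vector.
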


\begin{proof}
Write $A = \rho(E_{21})$ and $B = \rho(E_{32})$ and expand each factor as $\rho(y_{il}) = \alpha_{il}A + \beta_{il}B$.
Multiplying out, $m_i\xi_{\theta+2i}$ is a linear combination of the vectors $W\xi_{\theta+2i}$, one for each word $W$ of length $2i$ in the letters $A$ and $B$, the coefficient of $W$ being the product of one $\alpha_{il}$ or $\beta_{il}$ per letter.
Distinct words carry distinct monomial functions of the parameters.
Consequently, if a linear functional on $V^{(\theta)}$ vanishes on every achievable vector, then applying it to the expansion gives a polynomial in the parameters that vanishes identically, so every coefficient of that polynomial is zero and the functional kills every $W\xi_{\theta+2i}$.
The achievable vectors therefore span the same subspace as the word vectors themselves, and it is enough to identify the span of the latter.

Now $B$ raises the degree in $e_3$ by one and $A$ does not change it, so a word with $q$ letters $B$ carries $\xi_{\theta+2i}$ into $\C v_{\theta,q}$.
Taking the word $B^qA^{2i-q}$, whose rightmost letters act first,
\[
  B^qA^{2i-q}\xi_a
  = \frac{a!}{(a-2i+q)!} \cdot \frac{(k-a+2i-q)!}{(k-a+2i-2q)!} \cdot v_{\theta,q},
  \qquad a = \theta + 2i,
\]
which is nonzero exactly when $2i - q \leq a$ and $2q \leq k - \theta$, that is when $\mu \leq q \leq \min(2i,\nu)$.
Conversely any nonzero word with $q$ letters $B$ produces a scalar multiple of $v_{\theta,q}$, and nonnegativity of its final exponents forces the same two inequalities.
The word vectors therefore span exactly $S_i(\theta)$.
\end{proof}

The third input is the elementary fact that makes the one vector at a time filling work.
It says that a flag growing at least one dimension per step can be filled by picking one vector at a time, no matter which spanning set each step offers.

\begin{lemma}\label{lem:flag}
Let $W$ be a finite-dimensional vector space and let $S_0 \subseteq S_1 \subseteq \cdots \subseteq S_M$ be subspaces of $W$ with
\[
  \dim S_r \geq \min\bigl(r+1, \dim W\bigr) \quad \text{for } 0 \leq r \leq M,
  \qquad M + 1 \geq \dim W .
\]
For each $r$ let $T_r \subseteq S_r$ be a subset spanning $S_r$.
Then there are $w_0 \in T_0, \ldots, w_M \in T_M$ with $\langle w_0, \ldots, w_M\rangle = W$.
\end{lemma}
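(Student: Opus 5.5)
The plan is a greedy induction along the flag, choosing $w_r$ from $T_r$ so that the span grows whenever it still can. Put $n = \dim W$ and $P_r = \langle w_0, \ldots, w_r \rangle$, with $P_{-1} = 0$. The invariant to maintain is
\[
  \dim P_r \geq \min(r+1, n) \qquad \text{for } -1 \leq r \leq M .
\]
Once this is established, taking $r = M$ and using $M + 1 \geq n$ gives $\dim P_M \geq n$, hence $P_M = W$, which is the claim.

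For the inductive step, suppose $w_0, \ldots, w_{r-1}$ have been chosen with $\dim P_{r-1} \geq \min(r, n)$. If $\dim P_{r-1} \geq \min(r+1, n)$ already, then any $w_r \in T_r$ keeps the invariant, and we only need $T_r$ to be nonempty. When $S_r = 0$ this is automatic as long as $T_r$ contains $0$, or vacuous once $\dim P_{r-1} = n$. Otherwise $\dim P_{r-1} = r < n$, and we compare with $S_r$. The hypothesis gives $\dim S_r \geq \min(r+1, n) = r+1 > \dim P_{r-1}$, so $S_r \not\subseteq P_{r-1}$. Since $T_r$ spans $S_r$, some element $w_r \in T_r$ lies outside $P_{r-1}$, for otherwise $S_r = \langle T_r \rangle \subseteq P_{r-1}$. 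Choosing this element raises the dimension to $r+1$, which restores the invariant.

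The nesting $S_0 \subseteq \cdots \subseteq S_M$ is not actually needed for this argument, since each step uses only the dimension of $S_r$ itself. The only point requiring care is the degenerate one where the span has already reached $\min(r+1,n)$ and $T_r$ is empty. This forces $S_r = 0$, and then the dimension hypothesis gives $\min(r+1,n) = 0$, so $n = 0$ and everything is trivial. In every nontrivial case $\dim S_r \geq 1$, so $T_r$ is nonempty and the choice can be made. I expect no real obstacle: the whole content is that a spanning set of a space not contained in the current span contains a vector outside that span.
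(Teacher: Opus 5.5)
Your proof is correct and is essentially the paper's: both choose $w_r \in T_r$ greedily outside the current span whenever that span is not yet $W$, and both use only $\dim S_r \geq \min(r+1, \dim W)$ together with the fact that $T_r$ spans $S_r$. As you observe, the nesting of the $S_r$ is never used. The only blemish, which the paper shares when it says ``take $w_r \in T_r$ arbitrarily'', is the case $W = 0$ with some $T_r = \emptyset$: there the required choice is impossible, so that case is not ``trivial''. It is harmless, because in the application every graded piece $V^{(\theta)}$ is nonzero.
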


\begin{proof}
Choose the $w_r$ in the order $r = 0, 1, \ldots, M$.
Suppose $w_0, \ldots, w_{r-1}$ have been chosen and let $P$ be their span, so that $\dim P \leq r$.
If $P = W$, take $w_r \in T_r$ arbitrarily.
Otherwise $\dim P < \dim W$ and $\dim P \leq r$, so
\[
  \dim P < \min\bigl(r+1, \dim W\bigr) \leq \dim S_r,
\]
and since $T_r$ spans $S_r$ some element of $T_r$ lies outside $P$.
Take that element for $w_r$.
The dimension of the accumulated span therefore grows by one at every step until it reaches $\dim W$, and it does reach it because there are $M+1 \geq \dim W$ steps.
\end{proof}

\begin{proof}[Proof of \Cref{thm:liesym}]
The lower bound was noted above, so only the upper bound remains.

Fix $\theta$ and list $I(\theta) = \{i_0 < i_1 < \cdots < i_M\}$, a run of consecutive integers with $i_0 = \lceil \mu/2\rceil$ by \Cref{lem:liecount}.
The subspaces $S_{i_r}(\theta)$ increase with $r$ and form a flag in $V^{(\theta)}$, and since $2i_0 \geq \mu$,
\[
  \dim S_{i_r}(\theta) = \min(2i_0 + 2r, \nu) - \mu + 1
  \geq \min\bigl(2r + 1, \dim V^{(\theta)}\bigr)
  \geq \min\bigl(r + 1, \dim V^{(\theta)}\bigr),
\]
so the flag grows at least as fast as the steps come.
\Cref{lem:liecount} gives $M + 1 = |I(\theta)| \geq \dim V^{(\theta)}$, and \Cref{lem:liereach} says that the vectors available at step $r$ span $S_{i_r}(\theta)$.
The parameters belonging to different $m_i$ are independent, so \Cref{lem:flag} applies with $W = V^{(\theta)}$ and produces a choice of the $y_{i_r,l}$ for which the vectors $m_{i_r}\xi_{\theta+2i_r}$ span $V^{(\theta)}$.
By \eqref{eq:gradedsplit} this is exactly the spanning condition at the grade $\theta$.

It remains to satisfy all the grades at once.
For fixed $\theta$, the condition just established says that at least one maximal minor of the coordinate matrix of the vectors $m_i\xi_{\theta+2i}$, $i \in I(\theta)$, is nonzero.
The entries are polynomials in the parameters $\alpha_{il}, \beta_{il}$, so the condition cuts out a Zariski open subset $\Omega_\theta$ of the common parameter space.
There are $\sum_{i=1}^{d-1}2i = d(d-1)$ factors in play, each carrying two scalar parameters, so that space is the affine space $\mathbf{A}^{2d(d-1)}$.
The previous paragraph shows that every $\Omega_\theta$ is nonempty, affine space is irreducible, and a finite intersection of nonempty Zariski open subsets of an irreducible variety is nonempty.
Only the $2k+1$ values $-k \leq \theta \leq k$ occur, so $\bigcap_\theta \Omega_\theta$ contains a point.
Any such point gives $\sum_{i<d}m_i \cdot U = V$, and hence $\diamp[\slalg_3(\C),\mon](V,U) \leq d$, and we are done.
\end{proof}

\subsection{Monomial diameters in higher rank}

The first case to inspect beyond \Cref{thm:liesym} is the family from \Cref{prop:sym}, where the group-additive diameter is $k+1$ for every $n$, while the trivial bound can be much smaller.

\begin{question}\label{q:liesln}
Is
\[
  \diamp[\slalg_n(\C),\mon]\bigl(\Sym^k \C^n, \Sym^k \C^{n-1}\bigr)
  = \Bigl\lceil \frac{n+k-1}{n-1} \Bigr\rceil
\]
for all $n \geq 3$ and all $k \geq 1$?
\end{question}

\bibliographystyle{alpha}
\bibliography{biblio}

\end{document}